\newcommand{\myurl}[1]{\href{#1}{#1}}
\newtheorem{thm}{Theorem}[section]
\newtheorem{prop}[thm]{Proposition}
\newtheorem{lem}[thm]{Lemma}
\newtheorem{cor}[thm]{Corollary}
\theoremstyle{definition}
\newtheorem{definition}[thm]{Definition}
\newtheorem{remark}[thm]{Remark}
\numberwithin{equation}{section}
\newcommand{\pheq}{\phantom{=}\ }
\newcommand{\eqdef}{\coloneqq}
\newcommand{\al}{\alpha}
\newcommand{\Omal}{\Omega_\al}
\newcommand{\omal}{\omega_\al}
\newcommand{\smin}{s_{\al,n}}
\newcommand{\ga}{\gamma}
\newcommand{\be}{\beta}
\newcommand{\om}{\omega}
\newcommand{\eps}{\varepsilon}
\newcommand{\ka}{\varkappa_\al}
\newcommand{\la}{\lambda}
\newcommand{\phihyp}{\varphi_{\al,n}}
\newcommand{\etatrig}{\eta_{\al}}
\newcommand{\lbnd}{\ell_{\al,n}}
\newcommand{\fmin}{f_{\al,n}}
\newcommand{\bC}{\mathbb{C}}
\newcommand{\bN}{\mathbb{N}}
\newcommand{\bR}{\mathbb{R}}
\newcommand{\arccosh}{\operatorname{arccosh}}
\newcommand{\arctanh}{\operatorname{arctanh}}
\newcommand{\clos}{\operatorname{cl}}
\renewcommand{\Re}{\operatorname{Re}}
\renewcommand{\Im}{\operatorname{Im}}
\newcommand{\laasympt}{\lambda^{\operatorname{asympt}}}
\newcommand{\medstrut}{\vphantom{\int_0^1}}
\newcommand{\hstrut}{\mbox{}\ \mbox{}}
\newcommand{\hugestrut}{\vphantom{\displaystyle\int_{0_0}^{1^1}}}
\newcommand{\bigstrut}{\vphantom{\int_{0_0}^{1^1}}}
\title{Eigenvalues
of laplacian matrices \\ of the cycles
with one negative-weighted edge}
\author{Sergei M. Grudsky, Egor A. Maximenko, Alejandro Soto-Gonz\'alez}
\date{\today}
\begin{document}

\maketitle

\begin{center}
    \emph{
    Dedicated to Albrecht B\"ottcher on the occasion of his 70th birthday
    }
\end{center}

\begin{abstract}
We study the individual behavior of the eigenvalues of the laplacian matrices of the cyclic graph of order $n$,
where one edge has weight $\alpha\in\mathbb{C}$, with $\operatorname{Re}(\alpha)<0$,
and all the others have weights $1$.
This paper is a sequel of a previous one where we considered $\operatorname{Re}(\alpha) \in[0,1]$ (Eigenvalues of laplacian matrices of the cycles with one weighted edge, Linear Algebra Appl. 653, 2022, 86--115).
We prove that for $\operatorname{Re}(\alpha)<0$ and $n>\operatorname{Re}(\alpha-1)/\operatorname{Re}(\alpha)$, one eigenvalue is negative while the others belong to $[0,4]$ and are distributed as the function $x\mapsto 4\sin^2(x/2)$.
Additionally, we prove that as $n$ tends to $\infty$, the outlier eigenvalue converges exponentially to $4\operatorname{Re}(\alpha)^2/(2\operatorname{Re}(\alpha)-1)$. 
We give exact formulas for the half of the inner eigenvalues,
while for the others we justify the convergence of Newton's method and fixed-point iteration method.
We find asymptotic expansions, as $n$ tends to $\infty$,
both for the eigenvalues belonging to $[0,4]$ and the outlier.
We also compute the eigenvectors and their norms.

\medskip\noindent
\textbf{Keywords:}
eigenvalue, outlier eigenvalue, laplacian matrix, weighted cycle, 
Toeplitz matrix, periodic Jacobi matrix,
asymptotic expansion. 

\medskip\noindent
\textbf{Mathematical Subject Classification (2020):} 05C50, 47B36, 15A18, 15B05, 41A60, 65F15.
\end{abstract}

\subsection*{Funding}
The research of the first author has been supported by CONAHCYT (Mexico) project ``Ciencia de Frontera'' FORDECYT-PRONACES/61517/2020 and by Regional Mathematical Center of the Southern Federal University with the support of the Ministry of Science and Higher Education of Russia, Agreement 075-02-2021-1386.

The research of the second author has been supported by CONAHCYT (Mexico) project ``Ciencia de Frontera'' FORDECYT-PRONACES/61517/2020 and IPN-SIP projects
(Instituto Polit\'{e}cnico Nacional, Mexico).

The research of the third author has been supported by CONAHCYT (Mexico) PhD scholarship.

\section{Introduction}

For every natural $n\ge 3$ and every $\al$ in $\bC$,
we consider the $n\times n$ complex laplacian matrix $L_{\al,n}$ with the following structure:
\begin{equation*}
L_{\al,8}
=
\begin{bmatrix*}[r]
1+\overline{\al}& -1 & 0 & 0 & 0 & 0 & 0 & -\overline{\al}\phantom{\al} \\
-1\phantom{\al} & 2 & -1 & 0 & 0 & 0 & 0 & 0\phantom{\al} \\
0\phantom{\al} & -1 & 2 & -1 & 0 & 0 & 0 & 0\phantom{\al} \\
0\phantom{\al} & 0 & -1 & 2 & -1 & 0 & 0 & 0\phantom{\al} \\
0\phantom{\al} & 0 & 0 & -1 & 2 & -1 & 0 & 0\phantom{\al} \\
0\phantom{\al} & 0 & 0 & 0 & -1 & 2 & -1 & 0\phantom{\al} \\
0\phantom{\al} & 0 & 0 & 0 & 0 & -1 & 2 & -1\phantom{\al} \\
-\al\phantom{\al} & 0 & 0 & 0 & 0 &  0 & -1 & 1+\al
\end{bmatrix*}.
\end{equation*}
If $\al$ is real, $L_{\al,n}$ is the laplacian matrix of $G_{\al,n}$, where $G_{\al,n}$ is the cyclic graph of order $n$, where the edge between the vertices $1$ and $n$ has weight $\al$, and all other edges have weight $1$. 
See Figure~\ref{fig:graph} for $n=8$.
The eigenvalues and eigenvectors of $L_{\al,n}$ are important to solve the heat and wave equations on $G_{\al,n}$.
See~\cite{M2012} for general theory on laplacian matrices.

\begin{figure}[htb]
\centering
\begin{tikzpicture}
\foreach \j/\k in {0/1,1/2,2/3,3/4,4/5,5/6,6/7,7/8} {
  \node (N\j) at (\j*360/8:1.8cm) [draw, circle] {$\k$};
}
\foreach \j/\k in {0/1,1/2,2/3,3/4,4/5,5/6,6/7,7/0} {
  \draw (N\j) -- (N\k);
}
\foreach \j in {0,1,2,3,4,5,6} {
  \node at (\j*360/8 + 360/16:1.8cm) {$\scriptstyle 1$};
}
\node at (-360/16:1.8cm) {$\scriptstyle\al$};
\end{tikzpicture}
\caption{Graph $G_{\al,8}$\label{fig:graph}}
\end{figure}

Matrices $L_{\al,n}$ can be considered as tridiagonal Toeplitz matrices with perturbations in the corners $(1,1)$, $(1,n)$, $(n,1)$ and $(n,n)$.
Several investigations in this area and some of its applications have been recently developed, see for example~\cite{BPZ2020,BFGM2014,BYR2006,FK2020,DV2009,GT2009,GMS2021,LWHF2014,OA2014,R2017,TS2017,VHB2018,ZJJ2019}.
These matrices can also be considered as periodic Jacobi matrices.

The present paper is a continuation of~\cite{GMS2022}.
There we proved that for every $\al$ in $\bC$ the characteristic polynomial 
of $L_{\al,n}$,
defined by
$D_{\al,n}(\la)\eqdef\det(\la I - L_{\al,n})$,
equals the characteristic polynomial $D_{\Re(\al),n}$ of $L_{\Re(\al),n}$.
This implies that the eigenvalues of $L_{\al,n}$ only depend on $\Re(\al)$. 
For this reason, we are going to consider $\al$ as a real number.
For $\al$ in $\bR$, these matrices are real and symmetric, their eigenvalues are real, we enumerate them as follows:
\[ 
\la_{\al,n,1} \le \la_{\al,n,2} \le \ldots \le \la_{\al,n,n} . 
\]
It is a very well-known fact that the eigenvalues of the $n\times n$ tridiagonal Toeplitz matrix, with values $-1, 2, -1$ in the non-zero diagonals, are asymptotically distributed as the values of
\begin{equation}\label{eq:g}
    g(x)\eqdef4\sin^2(x/2)\qquad(x\in[0,\pi]).
\end{equation}
on $[0,\pi]$ as $n\to\infty$.
By the Cauchy interlacing theorem~\cite[Theorem 4.2]{SS1990}, it follows that the eigenvalues of $L_{\al,n}$ are also asymptotically distributed by $g$ on $[0,\pi]$, as $n$ tends to infinity.
This is also a very simple application of the theory of generalized locally Toeplitz sequences~\cite{GS2017}.

In~\cite{GMS2022}, we studied the individual behavior of the eigenvalues of the matrices $L_{\al,n}$ for $\al$ in $(0,1)$.
In that case, the eigenvalues of $L_{\al,n}$ belonged to $[0,4]$.
We solved the characteristic equation by numerical methods and derived asymptotic formulas for all eigenvalues.

Now we consider $\al<0$.
This means that the interaction between the vertices $1$ and $n$ has a negative coefficient, while the interactions between vertices $1$ and $2$, $2$ and $3$, $\ldots$\ , and $n-1$ and $n$, have the same positive coefficient.
We do not have physical examples of this situation.

If $n>(\al-1)/\al$, then only one eigenvalue of $L_{\al,n}$ is negative while the others belong to the interval $[0,4]$ and behave as in the case $0<\al<1$, considered in~\cite{GMS2022}.

Commonly, the spectral analysis ignores the eigenvalues outside the clusters.
In this paper, we focus our attention on these eigenvalues, so we introduce the next definitions.
The phrase ``number of eigenvalues'' assumes counting the eigenvalues with their algebraic multiplicities.

\begin{definition}\label{def:outlier_adherent}
Let $N\in\bN$ and $(A_n)_{n\ge N}$ be a matrix sequence where $A_n$ is a $n\times n$ matrix for every $n$.
Suppose that $\Omega\in \bC$.
We say that $\Omega$ is an \emph{outlier adherent point} for $(A_n)_{n\ge N}$ 
if for every sufficiently small $\eps>0$ the number of the eigenvalues of $A_n$ belonging to the $\eps$-neighborhood of $\Omega$ is strictly positive and behaves as $o(n)$ as $n\to\infty$.
\end{definition}

We do not define the concept of outlier eigenvalue for an individual matrix; we speak about outlier eigenvalues for a matrix $A_m$ in the context of a matrix sequence $(A_n)_{n\ge N}$.

\begin{definition}\label{def:outlier_eigenvalue}
Let $N\in\bN$ and $(A_n)_{n\ge N}$ be a matrix sequence where $A_n$ is a $n\times n$ matrix for every $n$.
Assume that $m\ge N$ and $\la$ be an eigenvalue of $A_m$.
We say that $\la$ is an \emph{outlier eigenvalue} for $A_m$ with respect to $(A_n)_{n\ge N}$ if there exists $\eps>0$ such that the number of the eigenvalues of $A_n$ belonging to the $\eps$-neighborhood of $\la$ behaves as $o(n)$ as $n\to\infty$.
\end{definition}

Figuratively speaking,
Definition~\ref{def:outlier_eigenvalue} means that $\la$ is an eigenvalue of $A_m$ and there are not many eigenvalues of $A_n$ near $\la$, when $n$ is sufficiently large.

Obviously, if $\Omega$ is an outlier adherent point for $(A_n)_{n\ge N}$, then there exist $\eps>0$ and $M\in\bN$ such that for every $m\ge M$ the eigenvalues of $A_m$ belonging to the $\eps$-neighborhood are outlier eigenvalues.

The principal novelty of this paper is a thorough analysis of the asymptotic behavior of the outlier eigenvalues for the specific matrix family.
In particular, we prove that the sequence of outlier eigenvalues converges exponentially to the outlier cluster point $\Omal \eqdef 4\al^2/(2\al-1)$, as $n\to\infty$.
The outlier eigenvalues naturally appear in the study of some structured matrices (see~\cite[Example~10.7]{GS2017},~\cite{BS1999}), but we have not found examples of their detailed analysis in the literature, except for~\cite{GMS2021}, where the asymptotic formula is less precise.

The main results of this paper are stated in Sections~\ref{sec:main_left}.
The corresponding proofs are in Sections~\ref{sec:left_char_pol}--\ref{sec:eigvec_left}.
We represent the characteristic polynomial in the convenient form and show the localization of the eigenvalues (Section~\ref{sec:left_char_pol}), the asymptotic behavior of the inner eigenvalues and their computation with the Newton method (Section~\ref{sec:inner_eigvals_left}),
the asymptotic behavior of the outlier eigenvalue (Sections~\ref{sec:out_eig_left}),
and calculate the norms of the eigenvectors (Section~\ref{sec:eigvec_left}).
In Section~\ref{sec:num_exp} we show some numerical experiments.

In comparison to works of other authors, we deal with a rather special matrix family, but this matrix family is not trivial (the eigenvalues and eigenvectors are not given by simple direct formulas), and our results on the eigenvalues and eigenvectors for this family are very complete.

\begin{remark}
    The case $\al>1$ (when $G_{\al,n}$ has one ``overweighted'' edge) is slightly more complicated, and we are going to study it in another paper.
    If $\al>1$ and $n>\al/(\al-1)$, then one eigenvalue of $L_{\al,n}$ is greater than $4$ and the others are in $[0,4]$.
    In that case, as $n$ tends to infinity, the maximal eigenvalue $\la_{\al,n,n}$ converges exponentially to $\Omal = 4\al^2/(2\al-1) > 4$.
    For $\al>1$, the situation essentially depends on the parity of $n$: if $n$ is even, then $\la_{\al,n,n} < \Omal$, and if $n$ is odd, then $\la_{\al,n,n} > \Omal$.
\end{remark}

\section{Main results}
\label{sec:main_left}

Define
\begin{equation}\label{eq:ka}
\ka\eqdef\frac{\al-1}{\al},
\qquad\text{i.e.,}\qquad
\ka=\frac{|\al|+1}{|\al|},
\end{equation}
\begin{equation}
\label{eq:Om_al}
\Omal \eqdef
\frac{4\al^2}{2\al-1},
\qquad\text{i.e.},\qquad
\Omal=-\frac{4}{\ka^2-1}.
\end{equation}
Since $\al<0$, $\ka>1$ and $\Omal<0$.
For every $j$ in $\{1,\ldots,n\}$, we put
\begin{equation*}
d_{n,j}
\eqdef
\frac{(j-1)\pi}{n}.
\end{equation*}

\begin{thm}[eigenvalues' localization]
\label{thm:localization_eigvals_left}
Let $n\ge 3$.
\begin{enumerate}
 \item[1)] If $n<\ka$,
 then $\la_{\al,n,1}= 0$ and $0<\la_{\al,n,2} < g(d_{n,2})$.
 \item[2)] If $n=\ka$,
 then $\la_{\al,n,1} = \la_{\al,n,2} = 0$.
 \item[3)] If $n>\ka$, then $\Omal<\la_{\al,n,1}<0$ and $\la_{\al,n,2} = 0$.
\end{enumerate}
Furthermore, for every $j$ with $3\le j\le n$,
\begin{align*}
    \la_{\al,n,j} &= g(d_{n,j})\qquad  (j\ \text{is odd}), \\
    g(d_{n,j-1})<\la_{\al,n,j} &<g(d_{n,j})\qquad (j \ \text{is even}). 
    \end{align*}
\end{thm}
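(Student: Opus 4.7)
The argument relies on the explicit representation of $D_{\al,n}(\la)\eqdef\det(\la I-L_{\al,n})$ to be established in Section~\ref{sec:left_char_pol}. Following the trigonometric computation of~\cite{GMS2022}, one expects that after the substitution $\la=g(\theta)$ the polynomial admits a factorization of the shape
\[
D_{\al,n}(g(\theta))\;=\;C(\theta)\,\sin(n\theta/2)\,\bigl[\cos(n\theta/2)-T_\al(\theta)\sin(n\theta/2)\bigr],
\]
with $C(\theta)$ nonzero on $(0,\pi)$ and $T_\al$ an explicit function of $\theta$ depending on $\al$. The eigenvalue $0$ is automatic because every row of $L_{\al,n}$ sums to zero, and explicitly solving $L_{\al,n}x=0$ through the interior three-term recurrence $x_{k+1}=2x_k-x_{k-1}$ together with the modified boundary equations shows that $\dim\ker L_{\al,n}=2$ precisely when $n=\ka$, which establishes case~2) and reduces cases~1) and~3) to statements about simple eigenvalues.

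The zeros of $\sin(n\theta/2)$ on $[0,\pi]$ occur at $\theta=2k\pi/n$, i.e., at $\theta=d_{n,j}$ for odd $j$, and immediately yield $g(d_{n,j})$ as eigenvalues (with $g(d_{n,1})=0$). The remaining $\lfloor n/2\rfloor$ eigenvalues come from the bracketed factor $h(\theta)\eqdef\cos(n\theta/2)-T_\al(\theta)\sin(n\theta/2)$. For each even $j\in\{4,\ldots,n\}$, I apply the intermediate value theorem to $h$ on $[d_{n,j-1},d_{n,j}]$: at the left endpoint $\sin(n\theta/2)=0$ so $h=\pm 1$, while at the right endpoint $\cos(n\theta/2)=0$ so $h=\mp T_\al(d_{n,j})\sin(n\theta/2)$. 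The hypothesis $\al<0$ forces $T_\al$ to have a definite sign making the two endpoint values opposite, yielding at least one root per subinterval. A count argument ($\sin$-roots plus these $h$-roots plus the single remaining ``free'' root of $h$ give exactly $n$ roots of $D_{\al,n}$) then forces each root to be simple and to coincide with the asserted $\la_{\al,n,j}$.

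The trichotomy of cases~1), 2), 3) is governed by the location of the ``free'' root of $h$ associated with $j=2$. A sign analysis of $h$ near $\theta=0$, involving the parameter $\ka=(\al-1)/\al$, shows that for $n<\ka$ this root lies in $(0,d_{n,2})$, giving $\la_{\al,n,2}\in(0,g(d_{n,2}))$ (case~1)); for $n=\ka$ it collides with $\theta=0$, producing a double zero of $D_{\al,n}$ at $\la=0$ (case~2)); and for $n>\ka$ it migrates into the hyperbolic regime $\theta=\mathrm{i}\eta$ with $\eta>0$, where $\la=-4\sinh^2(\eta/2)\in(\Omal,0)$ gives the outlier $\la_{\al,n,1}$ (case~3)). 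The strict bound $\la_{\al,n,1}>\Omal$ follows from analyzing the hyperbolic analogue of $h$ as $\eta\to\infty$. The main obstacle is the uniform sign bookkeeping for $h$ across all subintervals and, above all, the sharpness of the transition at $n=\ka$, which reduces to computing $D_{\al,n}'(0)$ and verifying that its sign switches exactly as $n$ crosses the threshold $\ka=(\al-1)/\al$.
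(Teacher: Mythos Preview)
Your plan is essentially the paper's own approach: the factorization you anticipate is exactly~\eqref{eq:charpol_factorization_trig}, and the sign-change/IVT argument on the second factor across the intervals $[d_{n,j-1},d_{n,j}]$ is what the paper carries out via Lemma~\ref{lem:q_n_eval_jpi/n} (values of $q_{\al,n}$ at the Chebyshev nodes). Your kernel computation for case~2) is a harmless variant of the paper's observation that $q_{\al,n}(2)=(1-\al)+\al n$ vanishes exactly when $n=\ka$.

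The one place where your outline is too loose is the lower bound $\la_{\al,n,1}>\Omal$ in case~3). ``Analyzing the hyperbolic analogue of $h$ as $\eta\to\infty$'' does not by itself pin down the specific number $\Omal$; you need either to evaluate the hyperbolic factor at the point corresponding to $\Omal$, or to observe that the hyperbolic equation reads $\tanh(n\eta/2)=\ka\tanh(\eta/2)$, so any solution must satisfy $\tanh(\eta/2)<\ka^{-1}$, i.e.\ $\eta<\omal$, whence $\la=g_-(\eta)>g_-(\omal)=\Omal$. The paper does the former: it sets $r_\al=\sqrt{1-2\al}$, notes that $\Omal=4-(r_\al+r_\al^{-1})^2$, and computes directly $q_{\al,n}(r_\al+r_\al^{-1})=\tfrac12(1+r_\al^2)r_\al^{-n}>0$, which together with $q_{\al,n}(2)<0$ traps the root. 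Either route closes the gap; just make sure you state one of them rather than the asymptotic hand-wave.
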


Theorem~\ref{thm:localization_eigvals_left} implies that $\la_{\al,n,j}$ with odd $j$ does not depend on $\al$.

Theorem~\ref{thm:localization_eigvals_left} provides another proof of the fact that the eigenvalues of $L_{\al,n}$ are asymptotically distributed as the function $g$ on $[0,\pi]$. 

Motivated by Theorem~\ref{thm:localization_eigvals_left} we use $g$ defined by~\eqref{eq:g} as a change of variable in the characteristic equation when  $\la_{\al,n,j}\in[0,4]$ and set
\begin{equation*}
z_{\al,n,j} \eqdef \widetilde{g}^{-1}(\la_{\al,n,j}),
\end{equation*}
where $\widetilde{g}\colon[0,\pi]\to[0,4]$ is a restriction of $g$.

Define $g_-\colon [0,\infty)\to (-\infty,0]$ by \begin{equation}\label{eq:gmin}
    g_-(x)\eqdef2-2\cosh(x) = -4\sinh^2\frac{x}{2}.
\end{equation}
Define 
\begin{equation}\label{eq:N_al}
N_\al \eqdef \max\{3,\lfloor\ka\rfloor+1\}.
\end{equation}
If $n\ge N_\al$, we use~\eqref{eq:gmin} as a change of variable and put
\[
\smin\eqdef g_-^{-1}(\la_{\al,n,1}).
\]
In Figure~\ref{fig:ggmineigvals} we have glued together $g$ and reflected $g_-$ into one spline.

\begin{figure}[htb]
\centering
\includegraphics{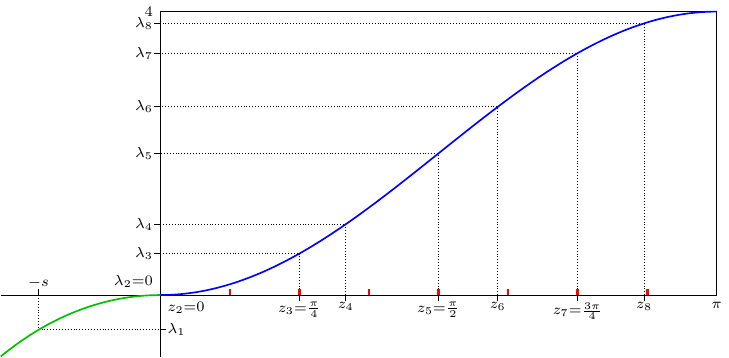}
\caption{Plot of $g$ (blue), plot of $x\mapsto g_-(-x)$ (green), points $z_{\al,n,j}$ and $\smin$, and the corresponding values of $\la_{\al,n,j}$, for $\al = -1/2$ and $n= 8$. 
The red labels on the horizontal axis are $j\pi/n$.
\label{fig:ggmineigvals}
}
\end{figure}

After applying the changes of variables $g$ or $g_-$, the characteristic equation transforms to certain equations for $z_{\al,n,j}$ or $s_{\al,n}$, respectively.
Those equations, stated in  the forthcoming Theorem~\ref{thm:main_equations}, will be written in terms of the following functions $\etatrig$ and $\phihyp$.
We define $\etatrig\colon[0,\pi]\to\bR$ by
\begin{equation}\label{eq:eta}
\etatrig(x)\eqdef2\arctan\left(\ka\tan\frac{x}{2}\right)-\pi = 
 -2\arctan\left(\ka^{-1}\cot\frac{x}{2}\right).
\end{equation}
This function strictly increases and takes values in $[-\pi,0]$.
Define $\phihyp\colon [0,\infty)\to\bR$ by
\begin{equation}\label{eq:phi}
    \phihyp(x) \eqdef 2\arctanh\left(\ka^{-1}\tanh\frac{nx}{2}\right).
\end{equation}
This function strictly increases on $[0,\infty)$ and takes values from $0$ to 
\begin{equation}\label{eq:om_al}
 \omal \eqdef \log(1-2\al).
\end{equation}
Notice that $g_-(\omal) = \Omal$ and $\tanh(\omal/2) = \ka^{-1}$. 

\begin{thm}[main equations]
\label{thm:main_equations}
Let $n\ge N_\al$.
Then $\smin$ is the unique solution on $(0,\omal)$ of the equation
\begin{equation}\label{eq:eq_first}
x = \phihyp(x).
\end{equation}
For every even $j$ with $4\le j\le n$, the number 
$z_{\al,n,j}$ is the unique solution on $[0,\pi]$ of the equation
\begin{equation}\label{eq:ls_weak_eq}
x
=d_{n,j} + \frac{\etatrig(x)}{n}.
\end{equation}
\end{thm}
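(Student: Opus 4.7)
The plan is to start from the explicit closed form of the characteristic polynomial $D_{\al,n}(\la)$ that will be established in Section~\ref{sec:left_char_pol}, substitute $\la=g(x)$ for the non-negative eigenvalues and $\la=g_-(s)$ for the negative one, and read off the claimed equations; uniqueness is then a monotonicity argument. The underlying structural fact I would exploit is that $L_{\al,n}$ commutes with the reversal $k\leftrightarrow n+1-k$, so its eigenvectors split into symmetric and antisymmetric families and $D_{\al,n}$ factors accordingly.

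For the trigonometric part, I would show that $D_{\al,n}(g(x))$ is proportional (up to a nowhere-vanishing prefactor on $(0,\pi)$) to
\[
\sin(nx/2)\,\bigl[\sin(nx/2)\cos(x/2)-\ka\cos(nx/2)\sin(x/2)\bigr].
\]
The first factor vanishes precisely at $d_{n,j}$ for odd $j$, recovering the exact eigenvalues already described by Theorem~\ref{thm:localization_eigvals_left}. Dividing the second factor by $\cos(nx/2)\cos(x/2)$ yields $\tan(nx/2)=\ka\tan(x/2)$; using~\eqref{eq:eta} and the branches of $\arctan$ on each sub-arc where $\cos(nx/2)\neq 0$, this is equivalent to $nx=(2k+1)\pi+\etatrig(x)$. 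Matching $2k+1=j-1$ with even $j\in\{4,\dots,n\}$ gives~\eqref{eq:ls_weak_eq}. Uniqueness on $[0,\pi]$ then follows because
\[
\etatrig'(x)=\frac{\ka}{1+(\ka^{2}-1)\sin^{2}(x/2)}\le\ka,
\]
and $n\ge N_{\al}>\ka$, so $x\mapsto nx-\etatrig(x)$ is strictly increasing from $\pi$ at $x=0$ to $n\pi$ at $x=\pi$ and hits each target value $(j-1)\pi$ exactly once.

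For the hyperbolic part, the analogous calculation with $\la=g_-(s)$ (equivalently, the analytic continuation $x\mapsto\mathrm{i}s$ of the trigonometric one) turns the second factor into $\tanh(ns/2)=\ka\tanh(s/2)$, i.e.\ $s=\phihyp(s)$. For uniqueness in $(0,\omal)$ I would compute $\phihyp(0)=0$ and $\phihyp'(0)=n/\ka>1$, verify by direct differentiation that $\phihyp$ is strictly concave on $(0,\infty)$, and note that $\phihyp(\omal)<2\arctanh(\ka^{-1})=\omal$ because $\tanh(n\omal/2)<1$. Thus $s-\phihyp(s)$ starts at $0$, decreases, and finally becomes positive at $s=\omal$; strict concavity of $\phihyp$ forces exactly one zero in $(0,\omal)$, which must be $\smin$ since Theorem~\ref{thm:localization_eigvals_left} places the unique negative eigenvalue in $(\Omal,0)=g_-\bigl((0,\omal)\bigr)$.

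The main obstacle is the derivation of the two-factor form of $D_{\al,n}(g(x))$. The symmetry-based split is clear in principle, but writing each piece as a clean trigonometric expression requires some identity-juggling. Moreover, identifying the correct branch of $\arctan$ on each arc of $(0,\pi)$ where $\cos(nx/2)\ne 0$, so that the branch-free equation $\tan(nx/2)=\ka\tan(x/2)$ condenses into the single equation $nx=(j-1)\pi+\etatrig(x)$, needs careful bookkeeping of signs and periods.
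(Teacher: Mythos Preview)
Your proposal is correct and follows the same route as the paper: substitute $\la=g(x)$ and $\la=g_-(s)$ into the factored characteristic polynomial (the paper's equation~\eqref{eq:charpol_factorization_trig} and~\eqref{eq:char_pol_gminus}), obtain $\tan(nx/2)=\ka\tan(x/2)$ and $\tanh(ns/2)=\ka\tanh(s/2)$, and rewrite these via $\etatrig$ and $\phihyp$. Two minor differences are worth noting. First, the two-factor form you flag as the ``main obstacle'' is not derived here at all: it is imported from the previous paper~\cite{GMS2022} via Chebyshev polynomials (Proposition~\ref{prop:pol_char_factorized}), so your symmetry argument, while correct for real $\al$, is unnecessary in the paper's logical flow. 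Second, for uniqueness the paper simply invokes Theorem~\ref{thm:localization_eigvals_left} (which already pins down exactly one eigenvalue in each relevant interval), whereas you supply independent monotonicity/concavity arguments; yours are self-contained and in fact reappear later in the paper (Proposition~\ref{prop:eta_bound}, Proposition~\ref{prop:properties_zeta}) when the authors analyze the fixed-point and Newton iterations.
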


For $n\ge N_\al$, the main equations from Theorem~\ref{thm:main_equations} can be solved by the fixed point method and Newton's method, see details in Theorems~\ref{thm:contraction_inner_left},~\ref{thm:Newton},~\ref{thm:phi_fixed_point},~\ref{thm:linear_conv_first_eigval}.
For $n< N_\al$, we only guarantee the convergence of the bisection method, see Remark~\ref{rem:n<ka}.

Using equations from Theorem~\ref{thm:main_equations} we derive asymptotic expansions for $\la_{\al,n,j}$ as $n\to\infty$.
The expansion of $\la_{\al,n,1}$ is drastically different from the expansion of the inner eigenvalues.
Therefore, we state the corresponding results in two separate theorems.

We define $\Lambda_{\al,n}\colon [0,\pi]\to\bR$ by
\begin{equation*}
\Lambda_{\al,n}(x)
\eqdef g(x)
+\frac{g'(x)
\etatrig(x)}{n}
+\frac{g'(x)
\etatrig(x)\etatrig'(x)
+\frac{1}{2}g''(x)
\etatrig(x)^2}{n^2}.
\end{equation*}
For all even $j$ with $2\le j\le n$,
we define $\laasympt_{\al,n,j}$ by
\begin{equation}\label{eq:laasympt_w}
\laasympt_{\al,n,j}
\eqdef \Lambda_{\al,n}(d_{n,j}).
\end{equation}

\begin{thm}[asymptotic expansion of inner eigenvalues]\label{thm:inner_eigvals_expansion}
There exists $C_1(\al)>0$ such that for every $n\ge N_\al$,
\begin{equation}\label{eq:weak_lambda_asympt_K}
\max_{\substack{4\le j\le n \\
j\, \text{even}}} \left|\la_{\al,n,j}-\laasympt_{\al,n,j}\right|
\le\frac{C_1(\al)}{n^3}.
\end{equation}
\end{thm}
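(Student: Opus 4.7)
The plan is to use equation~\eqref{eq:ls_weak_eq} from Theorem~\ref{thm:main_equations} to expand $z_{\al,n,j}$ in powers of $1/n$ up to order two, and then push the expansion through $g$ to obtain the corresponding expansion of $\la_{\al,n,j}=g(z_{\al,n,j})$, with a uniform $O(1/n^3)$ remainder.

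First, since $\etatrig$ takes values in $[-\pi,0]$, equation~\eqref{eq:ls_weak_eq} yields the a priori bound $|z_{\al,n,j}-d_{n,j}|\le \pi/n$. Taylor-expanding $\etatrig$ to second order about $d_{n,j}$ gives
\[
\etatrig(z_{\al,n,j}) = \etatrig(d_{n,j}) + \etatrig'(d_{n,j})(z_{\al,n,j}-d_{n,j}) + R_1,
\]
with $|R_1|\le \tfrac12 M_2 (z_{\al,n,j}-d_{n,j})^2 \le M_2\pi^2/(2n^2)$, where $M_2\eqdef\sup_{[0,\pi]}|\etatrig''|$. Substituting into~\eqref{eq:ls_weak_eq} and solving the resulting linear equation for $z_{\al,n,j}-d_{n,j}$ (which is legitimate once $n$ exceeds $\sup|\etatrig'|$; finitely many small $n\ge N_\al$ are absorbed into $C_1(\al)$) gives, uniformly in $j$,
\[
z_{\al,n,j}-d_{n,j} = \frac{\etatrig(d_{n,j})}{n} + \frac{\etatrig(d_{n,j})\,\etatrig'(d_{n,j})}{n^2} + O(n^{-3}),
\]
by expanding the geometric series $(1-\etatrig'(d_{n,j})/n)^{-1}$.

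Next, Taylor-expand $g$ to third order about $d_{n,j}$, bounding the cubic remainder uniformly via $\sup_{\bR}|g'''|\le 2$ and $|z_{\al,n,j}-d_{n,j}|\le\pi/n$. Substituting the expansion above and collecting terms by order of $1/n$, the $n^0$, $n^{-1}$, and $n^{-2}$ contributions assemble exactly into $\La_{\al,n}(d_{n,j})=\laasympt_{\al,n,j}$ (the cross terms $g'\etatrig\etatrig'$ and $\tfrac12 g''\etatrig^2$ at the $n^{-2}$ level match by construction of $\La_{\al,n}$), while all remaining contributions form a single $O(n^{-3})$ error bounded uniformly in $j$.

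The only real obstacle is verifying $\etatrig\in C^2([0,\pi])$ with $\sup|\etatrig|$, $\sup|\etatrig'|$, $\sup|\etatrig''|$ finite. This is handled by gluing the two formulas in~\eqref{eq:eta}: the first form $2\arctan(\ka\tan(x/2))-\pi$ is smooth on $[0,\pi)$ (since $\tan(x/2)$ is smooth there) and extends smoothly to $x=\pi$ via the asymptotic expansion $\arctan(y)=\tfrac{\pi}{2}-\tfrac1y+\tfrac1{3y^3}-\cdots$ applied to $y=\ka\tan(x/2)$; equivalently, the second form $-2\arctan(\ka^{-1}\cot(x/2))$ is manifestly smooth at $x=\pi$. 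Once these bounds are in hand, the constant $C_1(\al)$ can be written explicitly as a polynomial in $\sup|\etatrig|$, $\sup|\etatrig'|$, $\sup|\etatrig''|$, $\pi$, and $\sup|g'''|\le 2$.
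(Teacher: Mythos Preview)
Your proposal is correct and follows essentially the same route as the paper: first derive the two-term expansion $z_{\al,n,j}=d_{n,j}+\etatrig(d_{n,j})/n+\etatrig(d_{n,j})\etatrig'(d_{n,j})/n^2+O(n^{-3})$ from the fixed-point equation~\eqref{eq:ls_weak_eq} and the uniform bounds on $\etatrig',\etatrig''$ (the paper packages this as Propositions~\ref{prop:theta_approximation_1} and~\ref{prop:weak_theta_asympt}), then Taylor-expand $g$ about $d_{n,j}$ and collect terms. One remark: your caveat ``once $n$ exceeds $\sup|\etatrig'|$'' is actually automatic, since $\sup|\etatrig'|=\ka$ and $N_\al\ge\lfloor\ka\rfloor+1>\ka$, so no small $n$ need to be absorbed.
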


To state the asymptotic formula for $\la_{\al,n,1}$, we introduce the following numbers:
\begin{equation}\label{eq:bs}
    \be_{\al,1}
\eqdef
    \frac{16\al^2(\al-1)^2}{(1-2\al)^2}
,
\quad
\be_{\al,2}
\eqdef \frac{64\al^3 (\al-1)^3}{(1-2\al)^3}
,
\quad
\be_{\al,3}
\eqdef \frac{32\al^2 (1-\al)^2 (2\al^2-2\al+1)}{(1-2\al)^3}
.
\end{equation}
Equivalently,
\begin{equation}\label{eq:bs2}
\be_{\al,1}
=
\frac{16\ka^2}{(\ka^2-1)^2},
\qquad
\be_{\al,2}
=
\frac{64\ka^3}{(\ka^2-1)^3},
\qquad
\be_{\al,3}
=
\frac{32\ka^2(\ka^2+1)}{(\ka^2-1)^3}.
\end{equation}
We define $\laasympt_{\al,n,1}$ by
\begin{equation}\label{eq:left_laasympt}
  \laasympt_{\al,n,1} \eqdef \Omal +  \be_{\al,1} e^{-n\omal} +  \be_{\al,2}ne^{-2n\omal}
 -   \be_{\al,3} e^{-2n\omal}.
\end{equation}

\begin{thm}[asymptotic expansion of the first eigenvalue]
\label{thm:left_strong_asympt}
As $n\to\infty$,
the extreme eigenvalue $\la_{\al,n,1}$
of $L_{\al,n}$ converges exponentially
to $\Omal$.
More precisely, there exists $C_2(\al)>0$ such that for every $n\ge N_\al$,
\begin{equation}\label{eq:asmpt_min_three_terms}
\begin{aligned}
 \left|\la_{\al,n,1} - \laasympt_{\al,n,1} \right| \le C_2(\al)n^2 e^{-3n\omal}.
\end{aligned}
\end{equation}
\end{thm}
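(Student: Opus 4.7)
My plan is to work through the fixed-point characterization of $\smin$ from Theorem~\ref{thm:main_equations} and transport the result to $\la_{\al,n,1}$ via the change of variables $g_-$. The first step is to rewrite $\phihyp$ in a form that exposes the exponential smallness. Using $\tanh(\omal/2)=\ka^{-1}$, which gives $e^\omal=(\ka+1)/(\ka-1)$, an algebraic manipulation of~\eqref{eq:phi} yields the closed form
\[
\phihyp(x)=\log\frac{e^{nx+\omal}+1}{e^{nx}+e^\omal},\qquad
\phihyp(x)-\omal=\log\frac{1+e^{-(nx+\omal)}}{1+e^{-(nx-\omal)}},
\]
so that $\omal-\phihyp(x)$ is exponentially small as soon as $nx$ is large.

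Next I introduce the error variable $y_n\eqdef\omal-\smin>0$; the equation $\smin=\phihyp(\smin)$ becomes $y_n=\omal-\phihyp(\omal-y_n)$. I would first invoke Theorem~\ref{thm:linear_conv_first_eigval} (the exponential convergence of the fixed-point iteration) to obtain the a priori bound $0<y_n\le C(\al)e^{-n\omal}$, so $y_n$ is small and $ny_n\to 0$. Setting $q\eqdef e^{-(n+1)\omal+ny_n}$ and Taylor-expanding the two logarithms gives
\[
y_n=(e^{2\omal}-1)q-\tfrac12(e^{4\omal}-1)q^2+O(q^3),
\]
where the implied constant depends only on $\al$. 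Substituting the a priori bound into $q=e^{-(n+1)\omal}(1+ny_n+O(n^2y_n^2))$ and iterating once yields
\[
y_n=(e^{2\omal}-1)e^{-(n+1)\omal}+(e^{2\omal}-1)e^{-(n+1)\omal}\,ny_n^{(1)}-\tfrac12(e^{4\omal}-1)e^{-2(n+1)\omal}+O\!\bigl(n^2e^{-3n\omal}\bigr),
\]
with $y_n^{(1)}=(e^{2\omal}-1)e^{-(n+1)\omal}$.

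The third step is to convert from $\smin$ to $\la_{\al,n,1}=g_-(\omal-y_n)$ using the second-order Taylor expansion of $g_-$ at $\omal$. Using $g_-(\omal)=\Omal$, $g_-'(\omal)=-2\sinh\omal$, $g_-''(\omal)=-2\cosh\omal$, together with the identities $\sinh\omal=\frac{2\ka}{\ka^2-1}$ and $\cosh\omal=\frac{\ka^2+1}{\ka^2-1}$ derived from $e^\omal=(\ka+1)/(\ka-1)$, I combine the Taylor expansion with the expansion of $y_n$ above. Collecting the powers of $e^{-n\omal}$ produces
\[
\la_{\al,n,1}=\Omal+\frac{16\ka^2}{(\ka^2-1)^2}e^{-n\omal}+\frac{64\ka^3}{(\ka^2-1)^3}ne^{-2n\omal}-\frac{32\ka^2(\ka^2+1)}{(\ka^2-1)^3}e^{-2n\omal}+O\!\bigl(n^2e^{-3n\omal}\bigr),
\]
which matches $\laasympt_{\al,n,1}$ via~\eqref{eq:bs2}, and hence yields~\eqref{eq:asmpt_min_three_terms} with some $C_2(\al)>0$.

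The main obstacle I foresee is the rigorous bookkeeping of the remainder, because the implicit dependence $q=q(y_n)$ contains the factor $e^{ny_n}$ that is not itself exponentially small in $n$ (only in $ny_n$). Thus the remainder after one iteration of the fixed point is not $O(q^3)=O(e^{-3n\omal})$ but rather $O(n^2y_n^2 q)=O(n^2e^{-3n\omal})$, which is precisely the order appearing in~\eqref{eq:asmpt_min_three_terms}. One should therefore carry the $e^{ny_n}$ factor as $1+ny_n+O((ny_n)^2)$ throughout, and verify at each step that the $n$-polynomial prefactors are absorbed by the exponential gain. Once this is done, matching the coefficients of $e^{-n\omal}$, $ne^{-2n\omal}$ and $e^{-2n\omal}$ against~\eqref{eq:bs2} is a routine computation with $\ka$ in place of $e^{\pm\omal}$.
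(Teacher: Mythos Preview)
Your approach is essentially the paper's: obtain an a priori exponential bound on $\omal-\smin$, feed it back into the fixed-point equation to extract three terms of the expansion of $\smin$, and then compose with the Taylor expansion of $g_-$ at $\omal$. The paper packages the first step via Lemma~\ref{lem:phiExpansion} (the expansion of $\varphi_{\al,1}$ in $\ga_{1,\al},\ga_{2,\al}$), while you reach the same expansion through the explicit logarithmic identity for $\phihyp$; these are equivalent computations.

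One correction: the a priori bound $0<y_n\le C(\al)e^{-n\omal}$ is \emph{not} Theorem~\ref{thm:linear_conv_first_eigval} (that result concerns the convergence of Newton's iterates to $\smin$, not the distance $\omal-\smin$). The estimate you need is Proposition~\ref{prop:thtn_convergence_omal_exponentially_left}. With that reference fixed, the remainder bookkeeping you outline (carrying $e^{ny_n}=1+ny_n+O((ny_n)^2)$ through one iteration) is exactly what the paper does in the proof of Theorem~\ref{thm:omin_expansion}, and your final coefficients match~\eqref{eq:bs2}.
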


Since $e^{-\omal} = 1/(1+2|\al|)$, the expression $e^{-n\omal}$ can be written as $1/(1+2|\al|)^n$.

So, if $\al<0$ and $n$ is large enough, the minimal eigenvalue goes out of $[0,4]$ and converges rapidly to the negative number $\Omal$, and the situation with the other eigenvalues is similar to the case $0<\al<1$, but there is no eigenvalue in the interval $(0,g(2\pi/n))$.
The ``left spectral gap'' equals $\la_{\al,n,2}-\la_{\al,n,1} = |\la_{\al,n,1}| $ and converges exponentially to $|\Omal|$ as $n$ goes to infinity.

In particular, we conclude that Definitions~\ref{def:outlier_adherent} and~\ref{def:outlier_eigenvalue} make sense for our matrix sequence: $\Omal$ is an outlier adherent point for $(L_{\al,n})_{n\ge3}$, and $\la_{\al,n,1}$ for $n\ge N_\al$ is an outlier eigenvalue for $L_{\al,n}$.

Finally, we focus our attention on the eigenvectors.
In general, for complex values of $\al$, the eigenvectors depend on $\al$, not only on $\Re(\al)$.

\begin{thm}[eigenvectors for $\Re(\al)<0$]
\label{thm:eigvec_ls}
Let $\al\in\bC$ with $\Re(\al)<0$
and $n\ge N_{\Re(\al)}$.
Then $L_{\al,n}$ has the following eigenvectors.
\begin{enumerate}
\item
$[1,\ldots,1]^\top$ is an eigenvector associated to the eigenvalue $\la_{\al,n,2} =0$.
\item
For every $j$, $3\le j \le n$, the vector $v_{\al,n,j}=[v_{\al,n,j,k}]_{k=1}^n$ with the following components is an eigenvector associated to $\la_{\al,n,j}$:
\begin{equation}\label{eq:eivec_w}
    v_{\al,n,j,k}\eqdef \sin(k z_{\al,n,j}) -(1-\overline{\al}) \sin((k-1)z_{\al,n,j})  + \overline{\al} \sin((n-k)z_{\al,n,j}).
\end{equation}
\item
The vector
$v_{\al,n,1}=[v_{\al,n,1,k}]_{k=1}^n$ with the following components is an eigenvector associated to $\la_{\al,n,1}$:
\begin{equation}\label{eq:eivec_ls}
    v_{\al,n,1,k} \eqdef \sinh(k\smin)  - (1-\overline{\al})\sinh((k-1)\smin) + \overline{\al}\sinh((n-k)\smin).
\end{equation}
\end{enumerate}
\end{thm}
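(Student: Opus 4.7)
The plan is a direct coordinate-by-coordinate verification of $L_{\al,n}v=\la v$ for each claimed eigenvector. Since $L_{\al,n}$ has only three distinct row types---row~$1$, the interior rows $2\le k\le n-1$, and row~$n$---the check splits into these three cases. Part~1 is immediate: I would simply sum the entries of each row of $L_{\al,n}$ and get $0$ (row~$1$: $(1+\bar\al)-1-\bar\al=0$; row~$n$: $-\al-1+(1+\al)=0$; interior: $-1+2-1=0$), then invoke Theorem~\ref{thm:localization_eigvals_left} to identify $0$ as $\la_{\al,n,2}$ under the hypothesis $n\ge N_{\Re(\al)}$ (which forces $n>\ka$).

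For part~2, with $z=z_{\al,n,j}$ and $\la=g(z)=2-2\cos z$, the key observation is that each of the three building blocks $\sin(kz)$, $\sin((k-1)z)$, $\sin((n-k)z)$, viewed as a function of $k$, satisfies the recurrence $a_{k+1}-2\cos(z)a_k+a_{k-1}=0$; by linearity so does $v_{\al,n,j,k}$, which yields $-v_{k-1}+2v_k-v_{k+1}=\la v_k$ for every interior $k$. Row~$1$ is verified by expanding $(1+\bar\al)v_1 - v_2 - \bar\al v_n$; after using $\sin(2z)=2\cos(z)\sin(z)$ and $\sin((n-2)z)+\sin(nz)=2\cos(z)\sin((n-1)z)$, and noting that the $\bar\al$-dependent terms regroup so that the coefficient of $\sin z$ is $2$ and that of $\sin((n-1)z)$ is $2\bar\al$, the residual reduces to $(2-2\cos z)v_1=\la v_1$---crucially, without invoking any main equation. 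Row~$n$ is where equation~\eqref{eq:ls_weak_eq} enters: after the parallel expansion of $-\al v_1 - v_{n-1} + (1+\al)v_n - \la v_n$, the $\al$ and $\bar\al$ coefficients combine into the real parameter $a\eqdef\al+\bar\al=2\Re(\al)$, and the residual rewrites as $-a[\sin z + \sin((n-1)z)]+[a+2(\cos z-1)]\sin(nz)$. Sum-to-product brings this to a multiple of $\sin(nz/2)\bigl[a\sin(nz/2)\cos(z/2) + (2-a)\sin(z/2)\cos(nz/2)\bigr]$; since $(a-2)/a=\ka$, the bracket vanishes iff $\tan(nz/2)=\ka\tan(z/2)$. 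For $j$ even this is equivalent, via $\etatrig(z)=2\arctan(\ka\tan(z/2))-\pi$, to $nz=(j-1)\pi+\etatrig(z)$, namely~\eqref{eq:ls_weak_eq}; for $j$ odd, Theorem~\ref{thm:localization_eigvals_left} supplies $z=(j-1)\pi/n$, so $\sin(nz/2)=\sin((j-1)\pi/2)=0$ and the whole residual vanishes automatically.

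Part~3 is the hyperbolic mirror of part~2. With $s=\smin$ and $\la=g_-(s)=2-2\cosh s$, the blocks $\sinh(ks)$, $\sinh((k-1)s)$, $\sinh((n-k)s)$ satisfy $a_{k+1}-2\cosh(s)a_k+a_{k-1}=0$, so the interior rows are automatic; row~$1$ goes through by the same algebra with $\sin\to\sinh$ and $\cos\to\cosh$; and row~$n$ reduces, after the analogous manipulations, to $\tanh(ns/2)=\ka\tanh(s/2)$, which is exactly equation~\eqref{eq:eq_first} written via $\phihyp$.

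The main obstacle is the row~$n$ computation, and specifically two things: keeping the $\al/\bar\al$ bookkeeping honest so that everything collapses onto the real combination $2\Re(\al)$ (which is where the dependence of the formulas on $\Re(\al)$ alone manifests), and sequencing the product-to-sum and sum-to-product identities so that the factor $\sin(nz/2)$ (resp.\ $\sinh(ns/2)$) separates cleanly from the bracket encoding the main equation. Everything else is routine linearity and a single recurrence identity.
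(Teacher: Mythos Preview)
Your verification is correct and essentially complete. The paper's own proof is a one-line citation: ``Formulas~\eqref{eq:eivec_ls},~\eqref{eq:eivec_w} are consequences of~\cite[Proposition~8]{GMS2022}.'' So you are taking a genuinely different route --- a self-contained coordinate check instead of an appeal to the predecessor paper.

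A brief comparison: the cited Proposition~8 of~\cite{GMS2022} gives a general eigenvector formula for this family of perturbed tridiagonal Toeplitz matrices, from which~\eqref{eq:eivec_w} and~\eqref{eq:eivec_ls} drop out once the relevant change of variable ($\la=g(z)$ or $\la=g_-(s)$) is made. Your approach avoids the external dependency at the cost of the explicit row-$n$ computation. The payoff of your computation is that it makes transparent \emph{where} the main equations~\eqref{eq:eq_first} and~\eqref{eq:ls_weak_eq} enter (only in row~$n$) and \emph{why} the complex parameter $\al$ collapses to $2\Re(\al)$ at exactly that step; this is implicit in the citation-based proof. Your algebra is right: I checked that the row-$n$ residual factors as $-4\sin(z/2)\sin(nz/2)\bigl[a\sin(nz/2)\cos(z/2)+(2-a)\sin(z/2)\cos(nz/2)\bigr]$, and your treatment of the odd-$j$ case via $\sin(nz/2)=0$ is clean.

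One small omission: to call $v_{\al,n,j}$ an eigenvector you must check it is nonzero. This is easy --- writing $v_{\al,n,j,k}=C_1\sin(kz)+C_2\cos(kz)$, you find $C_2=(1-\overline{\al})\sin z+\overline{\al}\sin(nz)$, and a short case analysis (using $\Re(\al)<0$ and, for even $j$, that $\sin(nz)=-\sin(\eta_\al(z))>0$) gives $\Re(C_2)>0$; the hyperbolic case is analogous. You should add a sentence to this effect.
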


If $n< \varkappa_{\Re(\al)}$, then $[1,\ldots,1]^\top$ is an eigenvector associated to $\la_{\al,n,1} = 0$, and 
the components of an eigenvector associated to $\la_{\al,n,2}$ can be computed by~\eqref{eq:eivec_w}.

If $\al < 0$ and $n = \ka$ (i.e., $\al = -1/(n-1)$), then the eigenvalue $0$ has two orthogonal eigenvectors: $[1,\ldots,1]^\top$ and $[-(n-1),-(n-3), \ldots,n-3,n-1]^\top$.

To approximate the norms of the eigenvectors, we define
\begin{equation}\label{eq:nu_al}
  \nu_{\al}(x) \eqdef \frac{1-\Re(\al)}{2}g(x) - \frac{\Re(\al)}{2} g(\eta_{\Re(\al)}(x)) + \frac{\Re(\al)-|\al|^2}{2}g(x-\eta_{\Re(\al)}(x)) + 2|\al|^2,
\end{equation}
\begin{equation}\label{eq:mu}
\mu_\al \eqdef \frac{|\al|}{2\sqrt{2(\Re(\al)^2-\Re(\al))}}.
\end{equation}

\begin{thm}[norms of eigenvectors for $\Re(\al)<0$]
\label{thm:norms_eigvec_ls}
Let $\al\in\bC$ with $\Re(\al)<0$
and $n\ge N_{\Re(\al)}$.
\begin{enumerate}
\item
If $j\ge 3$ is odd, then
\begin{equation}\label{eq:norm_eigvec_odd_left}
    \|v_{\al,n,j}\|_2 = |1-\al|\sqrt{\frac{n}{2}\la_{\al,n,j}}.
\end{equation}
\item
If $j\ge4$ is even, then
\begin{equation}\label{eq:norm_eigvec_even_left}
    \|v_{\al,n,j}\|_2 = \sqrt{\nu_{\al}(d_{n,j}) n} + O_\al\left(\frac{1}{\sqrt{n}}\right),
\end{equation}
with $O_\al\left(\frac{1}{\sqrt{n}}\right)$ uniformly on $j$.
\item As $n\to\infty$,
\begin{equation}\label{eq:norm_eigvec_left_1}
\|v_{\al,n,1}\|_2
=\mu_{\al}\,e^{n\omal} + O(n) = \mu_{\al} (1-2\Re(\al))^n + O(n).
\end{equation}
\end{enumerate}
\end{thm}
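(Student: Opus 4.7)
The plan is to compute $\|v_{\al,n,j}\|_2^2 = \sum_{k=1}^n |v_{\al,n,j,k}|^2$ directly from (2.12) and (2.13). The key unifying step is to rewrite $\sin(mz)$ or $\sinh(ms)$ using complex or real exponentials and group terms by $e^{\pm ikz}$ or $e^{\pm ks}$, obtaining
\[
2i\,v_{\al,n,j,k} = Pe^{ikz}+Qe^{-ikz}
\qquad\text{or}\qquad
2\,v_{\al,n,1,k} = Pe^{ks}+Qe^{-ks},
\]
where $P,Q$ are explicit complex numbers independent of $k$. Part~1 (odd $j\ge 3$) is then immediate: since $\la_{\al,n,j}=g(d_{n,j})$ forces $nz_{\al,n,j}=(j-1)\pi$ with $j-1$ even, one has $\sin(nz_{\al,n,j})=0$ and $\sin((n-k)z_{\al,n,j})=-\sin(kz_{\al,n,j})$, so (2.12) collapses to $v_{\al,n,j,k}=2(1-\overline{\al})\sin(z_{\al,n,j}/2)\cos((2k-1)z_{\al,n,j}/2)$; the identity $\sum_{k=1}^n\cos^2((2k-1)z_{\al,n,j}/2) = n/2$, valid because $e^{2inz_{\al,n,j}}=1$, yields (2.14) exactly.

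For part~2 (even $j\ge 4$) one finds $P=1-(1-\overline{\al})e^{-iz}-\overline{\al}e^{-inz}$ and $Q=-1+(1-\overline{\al})e^{iz}+\overline{\al}e^{inz}$ (writing $z=z_{\al,n,j}$), hence
\[
\sum_{k=1}^n |v_{\al,n,j,k}|^2 = \tfrac{n}{4}\bigl(|P|^2+|Q|^2\bigr) + \tfrac{1}{2}\Re\!\left(P\overline{Q}\,e^{2iz}\,\frac{e^{2inz}-1}{e^{2iz}-1}\right).
\]
A direct expansion (in which the $\al$-dependent constant terms cancel) gives $|P|^2+|Q|^2 = 2(1-\Re\al)g(z) + 2\Re\al\cdot g(nz) + 2(|\al|^2-\Re\al)g((n-1)z)$. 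For even $j$, Theorem~\ref{thm:main_equations} yields $nz=(j-1)\pi+\eta_{\Re(\al)}(z)$ with $j-1$ odd, so $g(nz)=4-g(\eta_{\Re(\al)}(z))$ and $g((n-1)z)=4-g(z-\eta_{\Re(\al)}(z))$; substituting turns $\tfrac{1}{4}(|P|^2+|Q|^2)$ into exactly $\nu_\al(z)$. For the cross term, $|e^{2inz}-1|/|e^{2iz}-1| = |\sin\eta_{\Re(\al)}(z)|/|\sin z|$, which remains bounded uniformly in $j$ (the potentially dangerous regimes $z\to 0^+$ and $z\to\pi^-$ are tamed by the asymptotics of $\eta_{\Re(\al)}$ there). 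Hence $\|v_{\al,n,j}\|_2^2 = n\,\nu_\al(z_{\al,n,j})+O_\al(1)$; replacing $z_{\al,n,j}$ by $d_{n,j}$ costs another $O(1)$ by smoothness of $\nu_\al$ and the estimate $|z_{\al,n,j}-d_{n,j}|=O(1/n)$ from Theorem~\ref{thm:localization_eigvals_left}, and taking the square root yields (2.15).

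For part~3 the hyperbolic version gives $P=1-(1-\overline{\al})e^{-s}-\overline{\al}e^{-ns}$, $Q=-1+(1-\overline{\al})e^{s}+\overline{\al}e^{ns}$, and $4\|v_{\al,n,1}\|_2^2=|P|^2 S_+ + |Q|^2 S_- + 2n\Re(P\overline{Q})$ with $S_\pm=\sum_{k=1}^n e^{\pm 2ks}$ explicit geometric sums. Splitting $P=P_0-\overline{\al}e^{-ns}$ and $Q=Q_0+\overline{\al}e^{ns}$, the two dominant contributions (both proportional to $e^{2ns}$) are $|P_0|^2 e^{(2n+2)s}/(e^{2s}-1)$ and $|\al|^2 e^{2ns}/(e^{2s}-1)$. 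At $s=\omal$ the identity $e^{\omal}=1-2\Re\al$ gives $P_0=-\al/(1-2\Re\al)$, hence $|P_0|^2 e^{2\omal}=|\al|^2$, and the two leading terms merge to $\tfrac{2|\al|^2 e^{2n\omal}}{e^{2\omal}-1}=4\mu_\al^2 e^{2n\omal}$ (using $e^{2\omal}-1=4(\Re(\al)^2-\Re\al)$). Consequently $\|v_{\al,n,1}\|_2^2=\mu_\al^2 e^{2n\omal}+O(n\,e^{n\omal})$; this remainder absorbs the subleading exponential terms of the expansion (of orders $e^{ns}$ and $n\,e^{ns}$) as well as the effect of replacing $\smin$ by $\omal$, which is $O(e^{-n\omal})$ by Theorem~\ref{thm:left_strong_asympt} and is amplified by $\partial_s\|v\|_2^2\sim n\,e^{2ns}$. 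A square root converts this to (2.16).

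The main obstacle is this last step: since $\|v_{\al,n,1}\|_2^2$ is exponentially large, obtaining the absolute error $O(n)$ in $\|v_{\al,n,1}\|_2$ requires a careful tally of \emph{all} $O(e^{ns})$ subleading contributions, including the shift coming from $\smin-\omal$. In part~2 the corresponding challenge is purely uniformity in $j$: the bound on the cross-term geometric sum must hold simultaneously for the smallest and largest even $j\ge 4$, which is exactly where the asymptotic structure of $\eta_{\Re(\al)}$ near the endpoints of $[0,\pi]$ is essential.
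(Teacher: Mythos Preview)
Your proposal is correct. For parts~1 and~2 it is essentially the paper's argument in exponential clothing: the paper quotes the exact formula
\[
\|v_{\al,n,j}\|_2^2 = n\,\nu_\al(z_{\al,n,j}) + \frac{\sin(\eta_{\Re(\al)}(z_{\al,n,j}))}{\sin(z_{\al,n,j})}\,\xi_\al(z_{\al,n,j})
\]
from Proposition~6.1 (derived in the companion paper~\cite{GMS2022}), and your $n(|P|^2+|Q|^2)/4$ and cross-term are exactly these two pieces. For part~3 you take a genuinely different decomposition. The paper first rewrites $v_{\al,n,1,k}$ via the hyperbolic sum-to-product identity as
\[
2\sinh\tfrac{s}{2}\cosh\tfrac{(2k-1)s}{2} + 2\bar\al\sinh\tfrac{(n-1)s}{2}\cosh\tfrac{(n+1-2k)s}{2},
\]
squares, and sums to obtain $\|v_{\al,n,1}\|_2^2=u_1+u_2+u_3$ (Proposition~6.2); the leading $e^{2n\omal}$ contributions of $u_1$ and $u_3$ then \emph{cancel} (Lemma~6.3), leaving the $u_2$ contribution $\mu_\al^2 e^{2n\omal}$. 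In your $Pe^{ks}+Qe^{-ks}$ grouping the two leading pieces $|P_0|^2e^{2s}\cdot S_+$ and $|\al|^2 e^{2ns}\cdot S_-$ instead \emph{add} constructively, and the identity $|P_0|^2 e^{2\omal}=|\al|^2$ makes the coefficient transparent from the outset. Your route is a bit more direct (no hidden cancellation to chase), while the paper's grouping has the advantage that each of $u_1,u_2,u_3$ is already expressed through the spectral quantities $\la_{\al,n,1}$ and $\sinh((n-1)s/2)$, which connects cleanly with the rest of the asymptotic machinery.
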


In Propositions~\ref{prop:exact_norm_eigenvector} and~\ref{prop:norm_v1_expansion} we state exact formulas for the eigenvectors, but they are more complicated.

\section{The characteristic polynomial and eigenvalues' localization}\label{sec:left_char_pol}

In this section, we repeat some formulas for $D_{\al,n}(\la)\eqdef\det(\la I - L_{\al,n})$ established in~\cite{GMS2022} and prove Theorem~\ref{thm:localization_eigvals_left}.
Propositions~\ref{prop:char_pol},~\ref{prop:pol_char_factorized} and Lemmas~\ref{lem:q_n_eval_jpi/n},~\ref{lem:limits_pq} were proved in~\cite[Section 4]{GMS2022}. Proposition~\ref{prop:char_pol} also follows from~\cite[Corollary 2.4]{FF2009}.

For every $m$ in $\{0\} \cup \bN$, we denote by $T_m$ and $U_m$ the $m$th degree Chebyshev polynomials of the first and second kind, respectively. 
They are determined by the following properties:
\begin{equation}\label{eq:Chebyshev_poly}
T_{m}\left(\frac{t+t^{-1}}{2}\right) = \frac{t^m+t^{-m}}{2},\qquad  U_m\left(\frac{t+t^{-1}}{2}\right) = \frac{t^{m+1}-t^{-m-1}}{t-t^{-1}}.
\end{equation}

\begin{prop}[characteristic polynomial of $L_{\al,n}$ for complex $\al$]\label{prop:char_pol}
For $n\ge 3$ and $\al\in\bC$,
\begin{equation*}
D_{\al,n}(\la)
= (\la-2\Re(\al)) U_{n-1}\left(\frac{\la-2}{2}\right)
-2\Re(\al) U_{n-2}\left(\frac{\la-2}{2}\right)+2(-1)^{n+1}\Re(\al).
\end{equation*}
\end{prop}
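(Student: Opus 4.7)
The plan is to compute $D_{\al,n}(\la)\eqdef\det(\la I-L_{\al,n})$ by Laplace expansion along the first row, whose only nonzero entries are $\la-1-\overline{\al}$, $1$, and $\overline{\al}$ at columns $1$, $2$, and $n$. Each of the three resulting $(n-1)\times(n-1)$ cofactors is almost tridiagonal, carrying at most one additional entry $\al$ at its lower-left corner (inherited from position $(n,1)$ of $L_{\al,n}$); a second expansion, this time along the first column of each cofactor, splits the answer into a purely tridiagonal piece plus a piece linear in $\al$.

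Let $p_k(\la)\eqdef U_k((\la-2)/2)$ denote the characteristic polynomial of the $k\times k$ tridiagonal Toeplitz matrix with diagonal $2$ and off-diagonals $-1$; it satisfies the Chebyshev three-term recurrence $p_k(\la)=(\la-2)p_{k-1}(\la)-p_{k-2}(\la)$. The relevant submatrices actually have $+1$ off-diagonals, but only the product of paired off-diagonal entries enters a determinant, so the same $p_k$ applies; and when the last diagonal entry is modified to $\la-1-\al$, the resulting subdeterminant equals $(\la-1-\al)p_{k-1}(\la)-p_{k-2}(\la)$ by a one-step expansion. Assembling the three cofactors, weighted by the row-$1$ entries and the signs $+$, $-$, $(-1)^{n+1}$, yields an expression polynomial in $\al$, $\overline{\al}$, and $p_{n-2}$, $p_{n-3}$, $p_{n-4}$; applying the recurrence twice then rewrites everything as a linear combination of $p_{n-1}$ and $p_{n-2}$ plus constants.

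The decisive simplification takes place in the coefficient of $p_{n-2}$: the factor $(\la-1-\overline{\al})(\la-1-\al)$ multiplying $M_{11}$ contributes $+|\al|^2$, which cancels exactly against the $-\overline{\al}\al$ produced by the $(1,n)$-cofactor contribution $(-1)^{n+1}\overline{\al}\cdot(-1)^{n}\al\,p_{n-2}(\la)$. What survives is only $\al+\overline{\al}=2\Re(\al)$. A parallel cancellation combines the isolated constants $(-1)^n\al$ (arising from the $M_{12}$ expansion) and $(-1)^{n+1}\overline{\al}$ (the $M_{1n}$ leading term) into the single additive constant $2(-1)^{n+1}\Re(\al)$. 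One checks that the coefficient of $p_{n-1}$ reduces to $\la-2\Re(\al)$ and the coefficient of $p_{n-2}$ to $-2\Re(\al)$, giving the claimed identity. The main obstacle is purely bookkeeping --- managing the signs $(-1)^{i+j}$, tracking which subminor carries the $\al$ from the $(n,1)$ corner versus the $\overline{\al}$ from the $(1,n)$ corner, and keeping the indices on the $p_k$'s straight --- but once one anticipates the cancellation of $|\al|^2$ (forced by the fact that the statement depends only on $\Re(\al)$), the algebra organizes itself. Alternatively, one could extract the formula from general results on determinants of periodic Jacobi matrices, e.g.\ \cite[Corollary~2.4]{FF2009}.
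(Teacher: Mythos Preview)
Your argument is correct. The double Laplace expansion produces exactly the pieces you describe: $M_{11}=(\la-1-\al)p_{n-2}-p_{n-3}$, $M_{12}=(\la-1-\al)p_{n-3}-p_{n-4}+(-1)^{n}\al$, and $M_{1n}=1+(-1)^{n}\al\,p_{n-2}$, and the $|\al|^2$ terms cancel as you predicted; two applications of the recurrence then collapse everything to the stated combination of $p_{n-1}$ and $p_{n-2}$.

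As for comparison: the present paper does not actually re-prove this proposition. It simply cites \cite[Section~4]{GMS2022} (where the computation was carried out) and remarks that the identity also follows from \cite[Corollary~2.4]{FF2009}. Your write-up therefore supplies a self-contained derivation that the paper itself omits, and your closing reference to \cite{FF2009} matches the paper's second citation. The only cosmetic point is that what you call a ``proof proposal'' is really a proof sketch: the phrases ``one checks'' and ``assembling the three cofactors'' hide the explicit identities for $M_{11}$, $M_{12}$, $M_{1n}$ above, and a reader verifying your claim would benefit from seeing at least those three formulas stated, since the sign and index bookkeeping you warn about is genuinely the only place an error could creep in.
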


In the rest of the section, we suppose that $\al<0$.

\begin{prop}\label{prop:pol_char_factorized}
For $n\ge 3$,
\begin{equation}\label{eq:char_pol_fact}
D_{\al,n}(4-t^2) = 2(-1)^n \frac{p_{n}(t) q_{\al,n}(t)}{t},
\end{equation}
where
\begin{equation}\label{eq:pq_fact}
p_{n}(t) = (t^2-4)
U_{n-1}\left(\frac{t}{2}\right), \qquad q_{\al,n}(t) =  (1-\al)T_n\left(\frac{t}{2}\right)
+\al \frac{t}{2} U_{n-1}\left(\frac{t}{2}\right).
\end{equation}
\end{prop}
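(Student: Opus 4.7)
The plan is to derive~\eqref{eq:char_pol_fact} directly from the formula for $D_{\al,n}$ given in Proposition~\ref{prop:char_pol} (noting that $\Re(\al)=\al$ throughout this section, since we assume $\al<0$) by translating both sides into a common trigonometric form and then invoking the polynomial identity principle.

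First, I would observe that both sides of~\eqref{eq:char_pol_fact} really are polynomials in $t$: the division by $t$ on the right is harmless because either $U_{n-1}(0)=0$ (when $n$ is even) or $T_n(0)=0$ (when $n$ is odd), so $p_n(0)q_{\al,n}(0)=0$. It therefore suffices to verify the identity on the interval $t\in(-2,2)$. I would introduce the parametrization $t=2\cos\phi$ with $\phi\in(0,\pi)$, under which $\la=4-t^2=4\sin^2\phi$ and $(\la-2)/2=1-2\cos^2\phi=-\cos(2\phi)$.

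Next, I would translate every Chebyshev polynomial appearing on either side into trigonometric form via~\eqref{eq:Chebyshev_poly} combined with the parity rule $U_m(-x)=(-1)^m U_m(x)$. Specifically, $U_{n-1}((\la-2)/2)=(-1)^{n-1}\sin(2n\phi)/\sin(2\phi)$, $U_{n-2}((\la-2)/2)=(-1)^{n}\sin(2(n-1)\phi)/\sin(2\phi)$, $T_n(t/2)=\cos(n\phi)$, and $U_{n-1}(t/2)=\sin(n\phi)/\sin\phi$. Substituting the first two formulas into Proposition~\ref{prop:char_pol} and simplifying the combination $\sin(2n\phi)-\sin(2(n-1)\phi)$ via $\sin A-\sin B=2\cos\tfrac{A+B}{2}\sin\tfrac{A-B}{2}$ yields a closed trigonometric expression for $D_{\al,n}(4-t^2)$. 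Applying the latter two formulas inside~\eqref{eq:pq_fact} and then forming $2(-1)^{n}p_n(t)q_{\al,n}(t)/t$ produces another such expression (here the factor $t^2-4=-4\sin^2\phi$ absorbs one of the $\sin\phi$ from the denominator of $U_{n-1}(t/2)$, leaving only $\cos\phi$ in the denominator from the $1/t$).

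Finally, I would match the two expressions. Separating each side into its $\al$-free part and its $\al$-proportional part, the $\al$-free parts coincide immediately, and the remaining residual collapses to zero after one application of the angle-subtraction formula $\cos((2n-1)\phi)=\cos(2n\phi)\cos\phi+\sin(2n\phi)\sin\phi$. Since the resulting trigonometric identity holds for all $\phi\in(0,\pi)$, the corresponding polynomial identity in $t$ holds on an infinite set and hence everywhere. The main obstacle is purely organizational: managing the parity-dependent signs $(-1)^{n-1}$, $(-1)^{n}$, $(-1)^{n+1}$ and the slightly different angle shifts $2n\phi$, $2(n-1)\phi$, $(2n-1)\phi$; no deep idea beyond the standard Chebyshev-to-trigonometry dictionary and sum-to-product identities is required.
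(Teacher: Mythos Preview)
Your proposal is correct. The paper does not actually prove this proposition here; it simply cites the earlier paper~\cite[Section~4]{GMS2022}, and your direct trigonometric verification via $t=2\cos\phi$ is exactly the natural route (indeed, the intermediate expression you obtain for the right-hand side is precisely the paper's formula~\eqref{eq:charpol_factorization_trig} with $x=2\phi$).
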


The polynomials~\eqref{eq:pq_fact} after the change of variable $t = 2\cos(x/2)$ read as
\[ 
p_n(2\cos(x/2)) = -4\sin\frac{x}{2} \sin\frac{nx}{2},  \qquad q_{\al,n}(2\cos(x/2)) = (1-\al) \cos\frac{nx}{2} + \al\cos\frac{x}{2} \frac{\sin\frac{nx}{2}}{\sin\frac{x}{2}}. 
\]
Taking into account that $g(x)=4-(2\cos(x/2))^2$,
\begin{equation}
\label{eq:charpol_factorization_trig}
D_{\al,n}(g(x)) =  (-1)^{n+1} \frac{4\sin\frac{x}{2}\sin\frac{nx}{2}}{\cos\frac{x}{2}} \left( (1-\al)\cos\frac{nx}{2} + \al \cos\frac{x}{2} \frac{\sin\frac{nx}{2}}{\sin\frac{x}{2}}\right).
\end{equation}
Analogously, after the change of variable $t=2\cosh(x/2)$,~\eqref{eq:char_pol_fact} transforms to
\begin{equation}\label{eq:char_pol_gminus}
    D_{\al,n}(g_-(x)) = (-1)^n \frac{4\sinh\frac{x}{2} \sinh\frac{n x}{2}}{\cosh\frac{x}{2}} \left( (1-\al) \cosh\frac{n x}{2} + \al \cosh\frac{x}{2} \frac{\sinh\frac{n x}{2}}{\sinh\frac{x}{2}} \right).
\end{equation}

\begin{lem}\label{lem:q_n_eval_jpi/n}
For every $j$ with $1\le j\le n-1$,
\begin{equation*}
    q_{\al,n}\left(2\cos\frac{j\pi}{2n}\right) = \begin{cases}
         (-1)^{\frac{j}{2}} (1-\al), & \quad \text{if } j \text{ is even}, \\
        (-1)^{\frac{j-1}{2}} \al\cot\frac{j\pi}{2n} , & \quad \text{if } j \text{ is odd}.
    \end{cases}
\end{equation*}
Moreover,
\begin{equation*}
    q_{\al,n}(0) = 
    \begin{cases}
        0, & \quad \text{if}\ n\ \text{is odd}, \\
        (-1)^{\frac{n}{2}} (1-\al), &\quad \text{if}\ n\ \text{is even},
    \end{cases}
     \qquad q_{\al,n}(2) = (1-\al) + \al n.
\end{equation*}
\end{lem}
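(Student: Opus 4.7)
The plan is to substitute the trigonometric parametrization $t = 2\cos\theta$ directly into the factored form
\[
q_{\al,n}(t) = (1-\al)T_n(t/2) + \al\, \frac{t}{2}\, U_{n-1}(t/2)
\]
from Proposition~\ref{prop:pol_char_factorized}, and to exploit the identities $T_n(\cos\theta)=\cos(n\theta)$ and $U_{n-1}(\cos\theta) = \sin(n\theta)/\sin\theta$, which follow from~\eqref{eq:Chebyshev_poly} on setting $t\mapsto e^{i\theta}$ in the definitions. Since the restriction $1\le j\le n-1$ places $j\pi/(2n)$ strictly inside $(0,\pi/2)$, the denominator $\sin(j\pi/(2n))$ never vanishes, so no case distinction is needed for that factor.

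For the main formula I would set $\theta = j\pi/(2n)$, so that $t/2 = \cos(j\pi/(2n))$, which yields
\[
q_{\al,n}\!\left(2\cos\tfrac{j\pi}{2n}\right) = (1-\al)\cos\tfrac{j\pi}{2} + \al\, \cos\tfrac{j\pi}{2n}\, \frac{\sin\tfrac{j\pi}{2}}{\sin\tfrac{j\pi}{2n}}.
\]
If $j$ is even, then $\sin(j\pi/2)=0$ kills the second term while $\cos(j\pi/2)=(-1)^{j/2}$, giving the first case. If $j$ is odd, then $\cos(j\pi/2)=0$ kills the first term while $\sin(j\pi/2)=(-1)^{(j-1)/2}$, and combining the surviving cosine and sine into a cotangent produces the second case.

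The two boundary values are handled by the same identities in their limiting form. For $q_{\al,n}(0)$, the $\al$-term carries a factor $t/2=0$ and vanishes, while $T_n(0) = \cos(n\pi/2)$ is zero for odd $n$ and $(-1)^{n/2}$ for even $n$, giving the stated formula. For $q_{\al,n}(2)$ I would use $T_n(1) = 1$ and $U_{n-1}(1) = n$, both obtained by letting $t\to 1$ in~\eqref{eq:Chebyshev_poly} (or equivalently $\theta\to 0$), which immediately yields $(1-\al) + \al n$.

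There is no substantive obstacle here: the proof is a direct application of the trigonometric form of the Chebyshev polynomials, and the only mild care required is in the parity split for $\cos(j\pi/2)$ and $\sin(j\pi/2)$ and in verifying that the $\sin(j\pi/(2n))$ in the denominator is nonzero on the prescribed range of $j$.
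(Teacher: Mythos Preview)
Your proposal is correct and matches the paper's approach: the paper does not reprove this lemma but cites \cite[Section~4]{GMS2022}, and immediately after Proposition~\ref{prop:pol_char_factorized} it records exactly the trigonometric substitution $t=2\cos(x/2)$ you use, yielding the same expression $(1-\al)\cos\frac{nx}{2}+\al\cos\frac{x}{2}\,\frac{\sin(nx/2)}{\sin(x/2)}$ that drives your parity split. Your handling of the boundary values via $T_n(0)=\cos(n\pi/2)$, $T_n(1)=1$, $U_{n-1}(1)=n$ is the standard one and is fine.
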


\begin{lem}\label{lem:limits_pq}
If $n$ is odd, then
\begin{equation*}
    \lim_{t\to 0^+} \frac{2q_{\al,n}(t)}{t} = (-1)^{\frac{n-1}{2}} \Bigl( \al + (1-\al)n \Bigr) ,
\end{equation*}
and if $n$ is even, then
\begin{equation*}
    \lim_{t\to 0^+} \frac{2p_{n}(t)}{t} = 4(-1)^{\frac{n}{2}} n.
\end{equation*}
\end{lem}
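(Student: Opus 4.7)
The plan is to pass from $t$ to the trigonometric variable $x$ via $t=2\cos(x/2)$, so that $t\to 0^+$ corresponds to $x\to\pi^-$, and to exploit the closed-form expressions for $p_n(2\cos(x/2))$ and $q_{\al,n}(2\cos(x/2))$ recorded immediately after Proposition~\ref{prop:pol_char_factorized}. Writing $x=\pi-\eps$ with $\eps\to 0^+$ gives $t=2\sin(\eps/2)\sim\eps$, $\cos(x/2)=\sin(\eps/2)\sim\eps/2$ and $\sin(x/2)=\cos(\eps/2)\to 1$, so both limits reduce to an elementary first-order expansion in $\eps$.

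First I would record the parity-dependent values
\[
\sin\frac{n\pi}{2}=\begin{cases}(-1)^{(n-1)/2}, & n\text{ odd},\\ 0, & n\text{ even},\end{cases}
\qquad
\cos\frac{n\pi}{2}=\begin{cases}0, & n\text{ odd},\\ (-1)^{n/2}, & n\text{ even},\end{cases}
\]
and apply the addition formulas at $x=\pi-\eps$. For odd $n$ this gives $\sin(nx/2)=(-1)^{(n-1)/2}\cos(n\eps/2)$ and $\cos(nx/2)=(-1)^{(n-1)/2}\sin(n\eps/2)$. Substituting into the trigonometric form of $q_{\al,n}$ and keeping the leading order ($\sin(n\eps/2)\sim n\eps/2$, $\cos(n\eps/2)\to 1$, $\tan(\eps/2)\sim\eps/2$) yields
\[
q_{\al,n}(t)\sim (-1)^{(n-1)/2}\,\frac{\eps}{2}\,\bigl(\al+(1-\al)n\bigr),
\]
and dividing by $t/2\sim\eps/2$ produces the first claim.

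For even $n$, the same addition formulas give $\sin(nx/2)=-(-1)^{n/2}\sin(n\eps/2)\sim -(-1)^{n/2}\,n\eps/2$. Feeding this into $p_n(2\cos(x/2))=-4\sin(x/2)\sin(nx/2)$ and using $\sin(x/2)\to 1$ yields $p_n(t)\sim 2(-1)^{n/2}n\eps$, whence $2p_n(t)/t\to 4(-1)^{n/2}n$.

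There is no real obstacle: both statements reduce to first-order expansions in $\eps$, and the only care required is the parity bookkeeping, which the split into odd and even $n$ handles transparently. An alternative would be to differentiate the power series of $T_n$ and $U_{n-1}$ at the origin, but the substitution $t=2\cos(x/2)$ fits the notation already used in the paper and avoids separate derivative calculations.
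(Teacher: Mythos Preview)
Your proof is correct. The paper does not give its own proof of this lemma but refers to the previous paper~\cite{GMS2022}; your argument via the substitution $t=2\cos(x/2)$ and a first-order expansion at $x=\pi$ is exactly in the spirit of the surrounding material, since it uses the trigonometric forms of $p_n$ and $q_{\al,n}$ already recorded after Proposition~\ref{prop:pol_char_factorized}, and the parity bookkeeping you carry out is accurate.
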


\begin{prop}[trivial eigenvalues of $L_{\al,n}$]\label{prop:trivial_eigvals}
For every $n\ge3$ and every even $j$ with $0\le j\le n-1$, the number $g(j\pi/n)$ is an eigenvalue of $L_{\al,n}$.
\end{prop}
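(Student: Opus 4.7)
The plan is to evaluate the characteristic polynomial $D_{\al,n}$ at $\la = g(j\pi/n)$ using the trigonometric factorization~\eqref{eq:charpol_factorization_trig} provided by Proposition~\ref{prop:pol_char_factorized} and verify that it vanishes.

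For $j = 0$, I would argue directly that $g(0) = 0$ is an eigenvalue of $L_{\al,n}$. Every row of $L_{\al,n}$ sums to zero (the interior rows are of the form $[\ldots, -1, 2, -1, \ldots]$, while the first and last rows give $(1+\overline{\al}) - 1 - \overline{\al} = 0$ and $-\al - 1 + (1+\al) = 0$ respectively), so $[1,\ldots,1]^\top$ is an eigenvector of $L_{\al,n}$ with eigenvalue $0$. As an alternative, one can substitute $\la = 0$ into Proposition~\ref{prop:char_pol} and use the identity $U_m(-1) = (-1)^m(m+1)$ to verify $D_{\al,n}(0) = 0$.

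For an even $j$ with $2 \le j \le n-1$, I would set $x = j\pi/n$ and plug into~\eqref{eq:charpol_factorization_trig}. Since $x \in (0,\pi)$, both $\sin(x/2)$ and $\cos(x/2)$ are strictly positive, so no indeterminate form arises. Because $j$ is even, $nx/2 = j\pi/2$ is an integer multiple of $\pi$, giving $\sin(nx/2) = 0$ and $\cos(nx/2) = (-1)^{j/2}$. Therefore the prefactor $4\sin(x/2)\sin(nx/2)/\cos(x/2)$ in~\eqref{eq:charpol_factorization_trig} vanishes, while the parenthesized factor reduces to the finite value $(1-\al)(-1)^{j/2}$ (the second summand inside the parentheses carries $\sin(nx/2)/\sin(x/2) = 0$). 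Hence $D_{\al,n}(g(j\pi/n)) = 0$, and $g(j\pi/n)$ is an eigenvalue of $L_{\al,n}$.

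Given that the trigonometric factorization does all the heavy lifting, I do not anticipate any substantive obstacle. The only minor subtlety is that~\eqref{eq:charpol_factorization_trig} is indeterminate at $x = 0$ (both $\sin(x/2)$ and $\sin(nx/2)$ vanish there and the ratio $\sin(nx/2)/\sin(x/2)$ appears), which is why the $j = 0$ case is most cleanly disposed of by the row-sum observation rather than by direct substitution.
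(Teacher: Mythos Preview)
Your proof is correct and follows essentially the same approach as the paper, which simply states that the result follows from~\eqref{eq:pq_fact} or~\eqref{eq:charpol_factorization_trig}. Your separate treatment of $j=0$ via the row-sum argument is a reasonable way to sidestep the indeterminacy of~\eqref{eq:charpol_factorization_trig} at $x=0$; alternatively, the polynomial form~\eqref{eq:pq_fact} handles all cases uniformly, since $p_n(2)=(2^2-4)U_{n-1}(1)=0$ gives $D_{\al,n}(0)=0$ directly.
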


\begin{proof}
    Follows from~\eqref{eq:pq_fact} or~\eqref{eq:charpol_factorization_trig}.
\end{proof}

For every $j$ with $1\le j\le n$, we define \[
I_{n,j} \eqdef \left(\frac{(j-2)\pi}{n}, \frac{(j-1)\pi}{n}\right) = (d_{n,j-1},d_{n,j}).
\] 

\begin{proof}[Proof of Theorem~\ref{thm:localization_eigvals_left}]
From Lemmas~\ref{lem:q_n_eval_jpi/n} and~\ref{lem:limits_pq} we obtain the following facts.
\begin{enumerate}[noitemsep]
    \item If $n<(\al-1)/\al$, then $q_{\al,n}(2\cos(x/2))$ changes its sign in the interval $I_{n,2}$.
    \item If $n=(\al-1)/\al$, then $q_{\al,n}(2) = 0$.
    \item If $n > (\al-1)/\al$, then $q_{\al,n}(t)$ changes its sign in the interval $(2,r_\al+r_\al^{-1})$ where  $r_\al \eqdef
    \sqrt{1-2\al}$.
    Indeed,     by~\eqref{eq:Chebyshev_poly} and the equalities $1-\al = (1+r_\al^2)/2$, $\al = (1-r_\al^2)/2$ it follows that
    \[
    q_{\al,n}\left(r_\al + \frac{1}{r_\al}\right) = \frac{1}{2} (1+r_\al^2)  r_\al^{-n} > 0.
    \]
    Lemma~\ref{lem:q_n_eval_jpi/n} and assumption $n> \ka$ imply that $q_{\al,n}(2) <0$.
    Finally, a simple computation shows that
    \[
    \Omal = 4 - (r_\al + r_\al^{-1})^2.
    \]
    \end{enumerate}
Hence, we obtain the statements of the theorem about $\la_{\al,n,1}$ and $\la_{\al,n,2}$.
For the rest of the eigenvalues, the proof is similar to the proof of~\cite[Theorem 1]{GMS2022}.
In particular, for odd $j$, we use Proposition~\ref{prop:trivial_eigvals}.
\end{proof}


\section{Inner eigenvalues}
\label{sec:inner_eigvals_left}

In this section, we suppose that $\al<0$.
If $\la\in(0,4)$, we use the change of variable $\la=g(x)$, with $x\in(0,\pi)$, in~\eqref{eq:char_pol_fact}.
Then, $D_{\al,n}(g(x)) = 0$ reduces to $q_{\al,n}(2\cos(x/2)) = 0$, which is equivalent to  
\begin{equation*}
    \tan\frac{nx}{2} = \ka\tan\frac{x}{2}.
\end{equation*}
In particular, 
for even $j$ with $4\le j\le n$,
the solution $z_{\al,n,j}$ belonging to $I_{n,j}$ satisfies~\eqref{eq:ls_weak_eq}.
Thereby we obtain the second part of Theorem~\ref{thm:main_equations}.

Equation~\eqref{eq:ls_weak_eq} from Theorem~\ref{thm:main_equations}
can be rewritten in the form
\begin{equation}
\label{eq:equation_with_eta}
nx-(j-1)\pi=\etatrig(x).
\end{equation}
Figure~\ref{fig:eta_equation} shows $\eta_\al$ and the left-hand side of~\eqref{eq:equation_with_eta} for a couple of examples.

\begin{figure}[htb]
\centering
\includegraphics{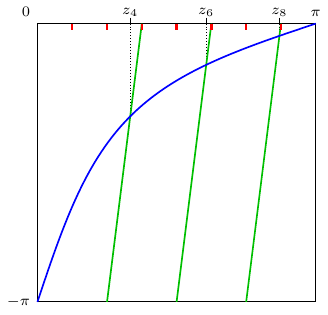}
\qquad
\includegraphics{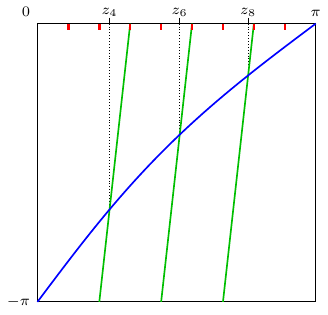}
\caption{Plot of $\eta_\al$ (blue) and the left-hand side of \eqref{eq:equation_with_eta}
(green)
for $\al = -1/2$, $n=8$ (left)
and $\al=-3$, $n=9$ (right).
\label{fig:eta_equation}
}
\end{figure}

Proposition~\ref{prop:eta_bound} and Theorem~\ref{thm:contraction_inner_left} follow directly from the properties of $\etatrig$, similarly to~\cite[Propositions 21 and 22]{GMS2022}.
The first two derivatives of $\etatrig$ are
\begin{align}
\label{eq:eta_d1}
    \etatrig'(x) & = \frac{\ka \left(1+\tan^2\frac{x}{2}\right)}{1+\ka^2\tan^2\frac{x}{2}} = \frac{1}{\ka} + \frac{\ka^2-1}{\ka\left(1+\ka^2\tan^2\frac{x}{2}\right)}, 
    \\
    \label{eq:eta_d2}
    \etatrig''(x) & = - \frac{\ka\left(\ka^2-1\right) \left(1+\tan^2\frac{x}{2}\right) \tan\frac{x}{2} }{\left(1+\ka^2\tan^2\frac{x}{2}\right)^2}.
\end{align}  

\begin{prop}\label{prop:eta_bound}
Each derivative of $\etatrig$ is a bounded function on $(0,\pi)$. In particular, 
\begin{align*}
    \sup_{0<x<\pi}|\etatrig'(x)| &= \ka, &
    \sup_{0<x<\pi}|\etatrig''(x)| & \le \frac{\ka^2-1}{2}.
\end{align*}
\end{prop}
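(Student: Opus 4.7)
The plan is to handle the three parts of the claim separately: the qualitative boundedness of all derivatives on $(0,\pi)$, the exact supremum of $|\etatrig'|$, and the upper bound for $|\etatrig''|$.

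For the first derivative, I would start from the already-established second form in~\eqref{eq:eta_d1}, namely $\etatrig'(x) = \frac{1}{\ka} + \frac{\ka^2-1}{\ka\,(1+\ka^2\tan^2(x/2))}$. Since $\ka>1$, both summands are strictly positive, so $\etatrig'>0$ and $|\etatrig'| = \etatrig'$. The function $1+\ka^2\tan^2(x/2)$ is strictly increasing on $(0,\pi)$ from $1$ to $+\infty$, hence $\etatrig'$ is strictly decreasing from its limit $\ka$ at $x\to0^+$ to the limit $1/\ka$ at $x\to\pi^-$. This immediately gives $\sup_{0<x<\pi}|\etatrig'(x)| = \ka$ (and in passing, shows $\etatrig'$ is bounded).

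For the second derivative, setting $u = \tan(x/2)>0$ in~\eqref{eq:eta_d2} yields $|\etatrig''(x)| = \ka(\ka^2-1)\,\dfrac{u(1+u^2)}{(1+\ka^2 u^2)^2}$. The goal is $|\etatrig''|\le(\ka^2-1)/2$, which is equivalent to $2\ka u(1+u^2)\le (1+\ka^2u^2)^2$. I would prove this by a chain of two AM--GM estimates: first, $1+\ka^2 u^2\ge 2\ka u$; second, since $\ka\ge 1$, also $1+\ka^2 u^2\ge 1+u^2$. Multiplying these two inequalities and using $(1+\ka^2u^2)^2 = (1+\ka^2u^2)(1+\ka^2u^2)$ gives the claim. (This bound is not tight, which is why the proposition only asserts ``$\le$''.)

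For the boundedness of higher derivatives, the delicate point is that the formula $2\arctan(\ka\tan(x/2))-\pi$ has an apparent singularity at $x=\pi$ where $\tan(x/2)$ blows up. The plan is to rewrite $\etatrig$ in a manifestly smooth form on a neighborhood of $[0,\pi]$. Using the tangent subtraction formula on $\arctan(\ka\tan(x/2)) - x/2$ and the double-angle identities, I would verify
\[
\etatrig(x) = -x+\pi+2\arctan\!\left(\frac{(\ka-1)\sin x}{(\ka+1)-(\ka-1)\cos x}\right).
\]
The denominator satisfies $(\ka+1)-(\ka-1)\cos x \ge 2>0$ for all real $x$, so the argument of $\arctan$ is real-analytic on $\bR$; hence $\etatrig$ extends to a $C^\infty$ function on the compact interval $[0,\pi]$, and every derivative is bounded there. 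The only subtle step worth double-checking is the tangent-sum identity, because the principal branch of $\arctan$ may differ by a constant from the natural extension; but since both sides agree at $x=0$ and are continuous, the identity holds throughout $[0,\pi]$.
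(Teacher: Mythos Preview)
Your arguments for $\etatrig'$ and $\etatrig''$ are correct; the AM--GM chain $(1+\ka^2u^2)\ge 2\ka u$ and $(1+\ka^2u^2)\ge 1+u^2$ is a clean way to get the stated bound. The paper itself does not give an in-text proof here, referring instead to the analogous propositions in the authors' earlier work, so your self-contained treatment actually supplies more detail than the paper does.

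There is, however, a sign slip in your identity for the higher-derivative part: the correct formula is
\[
\etatrig(x) = x-\pi+2\arctan\!\left(\frac{(\ka-1)\sin x}{(\ka+1)-(\ka-1)\cos x}\right),
\]
not $-x+\pi+\cdots$. Your own sanity check at $x=0$ would have caught this, since $\etatrig(0)=-\pi$ while your version evaluates to $\pi$. The derivation via the tangent-subtraction formula that you outline is right; you just transposed the sign when moving the $x$ to the other side. Once corrected, the argument that the arctangent's argument has denominator $(\ka+1)-(\ka-1)\cos x\ge 2$ and is therefore real-analytic on $\bR$ stands, and the boundedness of all derivatives on $[0,\pi]$ follows exactly as you say.
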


\begin{thm}\label{thm:contraction_inner_left}
    Let $n\ge N_\al$, $j$ be even, $4\le j\le n$. 
    Then the function $x\mapsto d_j + \etatrig(x)/n$ is a contraction on $\clos(I_{n,j})$, and its fixed point is $z_{\al,n,j}$.
\end{thm}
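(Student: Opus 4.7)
The plan is to verify directly the two ingredients of the Banach fixed-point setup on $\clos(I_{n,j}) = [d_{n,j-1}, d_{n,j}]$: first, that the map $F(x) \eqdef d_{n,j} + \etatrig(x)/n$ sends this interval into itself; second, that $F$ is a strict contraction there. Uniqueness of the fixed point, together with the identification of the fixed point as $z_{\al,n,j}$, will then follow from Theorem~\ref{thm:main_equations}.

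For the invariance of $\clos(I_{n,j})$ under $F$, I would use the fact, recorded just after~\eqref{eq:eta}, that $\etatrig$ takes values in $[-\pi, 0]$ on $[0,\pi]$. Consequently $\etatrig(x)/n \in [-\pi/n, 0]$ for every $x \in [0,\pi]$, so that
\[
F(x) \in \left[d_{n,j} - \frac{\pi}{n},\, d_{n,j}\right] = [d_{n,j-1}, d_{n,j}] = \clos(I_{n,j}).
\]
In particular $F$ maps $\clos(I_{n,j})$ (which is contained in $[0,\pi]$ since $j \le n$) into itself.

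For the contraction property, I would compute $F'(x) = \etatrig'(x)/n$ and invoke Proposition~\ref{prop:eta_bound}, which yields $|F'(x)| \le \ka/n$ on $(0,\pi)$. The assumption $n \ge N_\al = \max\{3, \lfloor\ka\rfloor+1\}$ forces $n > \ka$, so the Lipschitz constant $\ka/n$ is strictly less than $1$, and $F$ is a contraction on $\clos(I_{n,j})$ by the mean value theorem.

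Finally, by the Banach fixed-point theorem $F$ has a unique fixed point $x_\ast \in \clos(I_{n,j})$. On the other hand, Theorem~\ref{thm:main_equations} asserts that $z_{\al,n,j}$ is the unique solution of $x = F(x)$ on $[0,\pi]$, and Theorem~\ref{thm:localization_eigvals_left} combined with the strict monotonicity of $g$ on $[0,\pi]$ places $z_{\al,n,j}$ in $I_{n,j} \subset \clos(I_{n,j})$. Hence $x_\ast = z_{\al,n,j}$. No step is really an obstacle here; the entire argument is a direct application of Proposition~\ref{prop:eta_bound} together with the localization already proved, and the only point worth stating cleanly is how $n \ge N_\al$ enters to make $\ka/n < 1$.
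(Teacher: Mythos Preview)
Your argument is correct and matches the paper's approach: the paper does not spell out a proof but states that the result ``follow[s] directly from the properties of $\etatrig$, similarly to~\cite[Propositions 21 and 22]{GMS2022},'' and what you have written is precisely the expected unpacking of that remark---invariance from the range $\etatrig([0,\pi])\subseteq[-\pi,0]$, contraction from Proposition~\ref{prop:eta_bound} and $n\ge N_\al>\ka$, and identification of the fixed point via Theorem~\ref{thm:main_equations}.
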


Due to Theorem~\ref{thm:contraction_inner_left}, $z_{\al,n,j}$ can be computed by the simple iteration method.
Figure~\ref{fig:fixed_point_inner_eigenvalues} shows the functions from Theorem~\ref{thm:contraction_inner_left} for a couple of examples.

\begin{figure}[htb]
\centering
\includegraphics{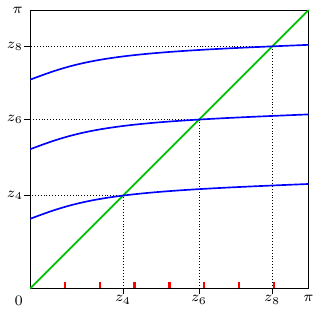}
\qquad
\includegraphics{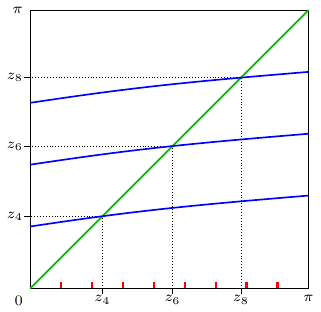}
\caption{Plots of the functions from Theorem~\ref{thm:contraction_inner_left} and their fixed points, for $\al = -1/2$, $n=8$ (left)
and $\al=-3$, $n=9$ (right).
\label{fig:fixed_point_inner_eigenvalues}
}
\end{figure}

In the upcoming Proposition~\ref{prop:convergence_f_convex_y0>c}
we recall some sufficient conditions for the  convergence of Newton's method for convex functions and provide an upper bound for the linear convergence.
The corresponding proofs appear in~\cite{GMS2022}, or in \cite[Section 22, Problem 14]{Spivak1994} and \cite[Theorem 2.2]{A1989}.

\begin{prop}
\label{prop:convergence_f_convex_y0>c}
Let $a,b\in\bR$ with $a<b$,
$F$ is differentiable and $F'>0$ on $[a,b]$,
$F$ is convex on $[a,b]$,
$c\in[a,b]$,
$F(c)=0$,
$y^{(0)}\in[c,b]$.
Define the sequence $(y^{(m)})_{m=0}^\infty$
by the recurrence relation
\begin{equation*}
    y^{(m+1)} = y^{(m)} - \frac{F\left(y^{(m)}\right)}{F'\left(y^{(m)}\right)}.
\end{equation*}
Then $y^{(m)}$ belongs to $[c,b]$ for every $m\ge0$, the sequence $(y^{(m)})_{m=0}^\infty$ decreases and converges to $c$, with
\begin{equation*}
y^{(m)}-c
\le (b-a)\left(1 - \frac{F'(a)}{F'(b)}\right)^m.
\end{equation*}
\end{prop}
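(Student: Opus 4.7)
The plan is to exploit the standard geometric picture of Newton's method for a monotone convex function: the tangent line at any point $y \in [c,b]$ sits below the graph of $F$, so its intersection with the $x$-axis (which is precisely $y^{(m+1)}$) lands between $c$ and $y$. This gives simultaneously monotonicity, the invariance of $[c,b]$, and (via a mean value argument) the linear convergence rate.

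First I would show by induction that $y^{(m)}\in[c,b]$ for all $m\ge 0$. Assuming $y^{(m)}\in[c,b]$: since $F'>0$, $F$ is strictly increasing, so $F(y^{(m)})\ge F(c)=0$, whence $y^{(m+1)}\le y^{(m)}\le b$. For the lower bound, use convexity of $F$ in the form
\[
F(c)\ \ge\ F\bigl(y^{(m)}\bigr)+F'\bigl(y^{(m)}\bigr)\bigl(c-y^{(m)}\bigr),
\]
which, because $F(c)=0$ and $F'(y^{(m)})>0$, rearranges to $y^{(m)}-F(y^{(m)})/F'(y^{(m)})\ge c$, i.e.\ $y^{(m+1)}\ge c$. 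This simultaneously proves the invariance and the monotone decrease, so $(y^{(m)})$ is monotone and bounded, hence convergent to some $y^*\in[c,b]$. Passing to the limit in the recurrence gives $F(y^*)/F'(y^*)=0$, and strict monotonicity of $F$ forces $y^*=c$.

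For the rate, I would apply the mean value theorem to $F$ on $[c,y^{(m)}]$: there exists $\xi\in[c,y^{(m)}]$ with $F(y^{(m)})=F'(\xi)\bigl(y^{(m)}-c\bigr)$. Substituting into the Newton recurrence,
\[
y^{(m+1)}-c\ =\ \bigl(y^{(m)}-c\bigr)\left(1-\frac{F'(\xi)}{F'(y^{(m)})}\right).
\]
Convexity of $F$ makes $F'$ non-decreasing, so $F'(\xi)\ge F'(a)$ and $F'(y^{(m)})\le F'(b)$, giving $F'(\xi)/F'(y^{(m)})\ge F'(a)/F'(b)$, and hence
\[
0\ \le\ y^{(m+1)}-c\ \le\ \bigl(y^{(m)}-c\bigr)\left(1-\frac{F'(a)}{F'(b)}\right).
\]
Iterating and using $y^{(0)}-c\le b-a$ yields the claimed bound.

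The only delicate point is the invariance argument, which essentially requires convexity: without it, the Newton iterate could overshoot below $c$ and the whole monotone scheme would collapse. Once this is secured, the remaining pieces (monotonicity, limit identification, and linear rate via MVT plus monotone $F'$) are routine. Differentiability rather than $C^1$-smoothness is enough throughout since we only use positivity and monotonicity of $F'$, both of which follow from the hypotheses $F'>0$ and $F$ convex.
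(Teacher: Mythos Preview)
The paper does not give its own proof of this proposition; it merely records that ``the corresponding proofs appear in~\cite{GMS2022}, or in \cite[Section~22, Problem~14]{Spivak1994} and \cite[Theorem~2.2]{A1989}.'' Your self-contained argument is correct and follows the standard route those references take: the tangent-below-graph inequality from convexity gives invariance of $[c,b]$ and monotonicity, boundedness yields a limit, and the mean value theorem combined with monotonicity of $F'$ produces the linear rate $1-F'(a)/F'(b)$.

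One minor remark: when you ``pass to the limit in the recurrence'' to obtain $F(y^*)/F'(y^*)=0$, you are implicitly using continuity of $F'$ at $y^*$. This is in fact true (a differentiable convex function has continuous derivative, since $F'$ is monotone and satisfies the Darboux intermediate value property), but it costs nothing to sidestep the issue: because $F'(y^{(m)})\in[F'(a),F'(b)]$ is bounded and bounded away from zero, $y^{(m+1)}-y^{(m)}\to 0$ forces $F(y^{(m)})\to 0$ directly, hence $F(y^*)=0$ by continuity of $F$.
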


For every $n\ge4$ and every even $j$ with $4\le j\le n$, we define $h_{\al,n,j}\colon \clos(I_{n,j})\to \bR$ by
\begin{equation*}
    h_{\al,n,j}(x) \eqdef n x-(j-1)\pi-\etatrig(x).
\end{equation*}
Recall that $N_\al$ is defined by~\eqref{eq:N_al}.

\begin{thm}[convergence of Newton's method]
\label{thm:Newton}
Let $n \ge N_\al$, $j$ be even, $4\le j\le n$
and $y_{\al,n,j}^{(0)}=d_{n,j}$.
Define the sequence $(y_{\al,n,j}^{(m)})_{m=0}^\infty$ by the recursive formula
\begin{equation}\label{eq:Newton_sequence_eigval} 
y_{\al,n,j}^{(m)}
\eqdef y_{\al,n,j}^{(m-1)} - \frac{h_{\al,n,j}\left(y_{\al,n,j}^{(m-1)}\right)}{h_{\al,n,j}'\left(y_{\al,n,j}^{(m-1)}\right)}\quad (m\ge1).
\end{equation}
Then $(y_{\al,n,j}^{(m)})_{m=0}^\infty$ is well defined and converges to
$z_{\al,n,j}$, and the convergence is at least linear:
\begin{equation}\label{eq:linear_newton_estimate}
y_{\al,n,j}^{(m)}-z_{\al,n,j}
\le \frac{\pi}{n} \left( \frac{\ka^2-1}{\ka n-1}\right)^m.
\end{equation}
Moreover, if $n\ge 2 N_{\al}$, then the convergence is quadratic, and
\begin{equation}
\label{eq:quadratic_newton_estimate}
y_{\al,n,j}^{(m)}
-z_{\al,n,j}
\le
\frac{\pi}{n}
\left(\frac{\pi \ka^2}{2n^2}\right)^{2^m-1}.
\end{equation}
\end{thm}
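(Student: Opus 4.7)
The plan is to apply Proposition~\ref{prop:convergence_f_convex_y0>c} with $F = h_{\al,n,j}$, $[a,b] = \clos(I_{n,j}) = [d_{n,j-1}, d_{n,j}]$, $c = z_{\al,n,j}$, and $y^{(0)} = d_{n,j} = b$. Three hypotheses must be verified. First, $h_{\al,n,j}'(x) = n - \etatrig'(x) \ge n - \ka > 0$ follows from Proposition~\ref{prop:eta_bound} combined with $n \ge N_\al > \ka$. Second, $h_{\al,n,j}''(x) = -\etatrig''(x) > 0$ on $(0,\pi)$: formula~\eqref{eq:eta_d2} is visibly negative because $\ka > 1$ and $\tan(x/2) > 0$, so $h_{\al,n,j}$ is strictly convex. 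Third, $h_{\al,n,j}(d_{n,j}) = -\etatrig(d_{n,j}) \ge 0 = h_{\al,n,j}(z_{\al,n,j})$, hence the monotonicity of $h_{\al,n,j}$ places $y^{(0)}$ in $[c, b]$.

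Proposition~\ref{prop:convergence_f_convex_y0>c} then delivers the convergence of the Newton iterates to $z_{\al,n,j}$ together with the geometric bound $y^{(m)}_{\al,n,j} - z_{\al,n,j} \le (\pi/n) \bigl(1 - h_{\al,n,j}'(a)/h_{\al,n,j}'(b)\bigr)^{m}$. To match this with~\eqref{eq:linear_newton_estimate}, I would use~\eqref{eq:eta_d1} to observe that $\etatrig'$ strictly decreases on $(0,\pi)$ from $\ka$ at $0$ to $1/\ka$ at $\pi$, whence the elementary estimates $h_{\al,n,j}'(a) \ge n - \ka$ and $h_{\al,n,j}'(b) \le n - 1/\ka$. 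A short simplification then gives $1 - h_{\al,n,j}'(a)/h_{\al,n,j}'(b) \le 1 - \ka(n-\ka)/(\ka n-1) = (\ka^2-1)/(\ka n-1)$, yielding~\eqref{eq:linear_newton_estimate}.

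For the quadratic estimate~\eqref{eq:quadratic_newton_estimate} under the stronger hypothesis $n \ge 2 N_\al$, I would set $e_m \eqdef y^{(m)}_{\al,n,j} - z_{\al,n,j}$ and invoke the standard Newton error identity $e_{m+1} = h_{\al,n,j}''(\xi)\, e_m^2 / \bigl(2 h_{\al,n,j}'(y^{(m)}_{\al,n,j})\bigr)$ with $\xi$ between $z_{\al,n,j}$ and $y^{(m)}_{\al,n,j}$. The bounds $|h_{\al,n,j}''| \le (\ka^2-1)/2 \le \ka^2/2$ (from Proposition~\ref{prop:eta_bound}) and $h_{\al,n,j}' \ge n - \ka \ge n/2$ (where the hypothesis $n \ge 2 N_\al \ge 2\ka$ is consumed) collapse this into $e_{m+1} \le (\ka^2/(2n))\, e_m^2$. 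Setting $L \eqdef \pi\ka^2/(2n^2)$, the claim $e_m \le (\pi/n)\, L^{2^m - 1}$ follows by induction on $m$: the base case is $e_0 \le \pi/n$, and the inductive step telescopes to $e_{m+1} \le (\ka^2/(2n))(\pi/n)^2 L^{2(2^m-1)} = (\pi/n)\, L \cdot L^{2^{m+1}-2} = (\pi/n)\, L^{2^{m+1}-1}$.

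The main obstacle is not conceptual but rather the bookkeeping of constants: one must shepherd the crude bounds on $\etatrig'$ and $\etatrig''$ so that they combine into exactly the forms $(\ka^2-1)/(\ka n-1)$ and $\pi\ka^2/(2n^2)$ appearing in~\eqref{eq:linear_newton_estimate} and~\eqref{eq:quadratic_newton_estimate}. The key algebraic identity in the linear step is $(n-\ka)/(n-1/\ka) = \ka(n-\ka)/(\ka n-1)$, and the role of the hypothesis $n \ge 2 N_\al$ in the quadratic step is precisely to allow the replacement $n-\ka \ge n/2$. Beyond these observations, everything reduces to a routine verification of the convexity sign in~\eqref{eq:eta_d2} and a direct application of Proposition~\ref{prop:convergence_f_convex_y0>c}.
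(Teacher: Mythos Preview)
Your proof is correct and follows essentially the same route as the paper: verify convexity and positivity of $h_{\al,n,j}'$ via~\eqref{eq:eta_d1}--\eqref{eq:eta_d2}, apply Proposition~\ref{prop:convergence_f_convex_y0>c} on $\clos(I_{n,j})$, and obtain~\eqref{eq:linear_newton_estimate} from the endpoint bounds $n-\ka\le h_{\al,n,j}'\le n-1/\ka$. For~\eqref{eq:quadratic_newton_estimate} the paper simply bounds $(\pi/n)\max|h''|/(2\min|h'|)\le \pi\ka^2/(2n^2)<1$ and cites the standard quadratic-convergence theorem, whereas you carry out the induction explicitly; the content is the same.
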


\begin{proof}
Formulas for $\etatrig'$ and $\etatrig''$ (\eqref{eq:eta_d1} and~\eqref{eq:eta_d2}) imply that $h_{\al,n,j}'>0$
and $h_{\al,n,j}''>0$ on $\clos(I_{n,j})$.
Moreover,
$d_{n,j-1}<z_{\al,n,j}<y_{\al,n,j}^{(0)}=d_{n,j}$.
So, the assumptions of Proposition~\ref{prop:convergence_f_convex_y0>c} are satisfied.
Here are rough estimates of the derivatives of $h_{\al,n,j}$ at the extremes of $I_{n,j}$:
\[
 n-\ka = h_{\al,n,j}'(0) \le h_{\al,n,j}'(d_{n,j-1}) \le h_{\al,n,j}'(d_{n,j}) \le h_{\al,n,j}'(\pi) = n-\frac{1}{\ka}.
\]
Therefore,
\[
1-\frac{h_{\al,n,j}'(d_{n,j-1})}{h_{\al,n,j}'(d_{n,j})} \le 1-\frac{h_{\al,n,j}'(0)}{h_{\al,n,j}'(\pi)} = \frac{\ka^2-1}{\ka n-1},
\]
and we obtain~\eqref{eq:linear_newton_estimate}.

Finally, if $n \geq 2N_{\al}$, then 
\[
\frac{\pi}{n} \cdot \frac{\displaystyle\max_{0 \le x \le \pi}|h_{\al,n,j}''(x)|}{2\displaystyle\min_{0 \le x\le \pi}|h_{\al,n,j}'(x)|} 
\le 
\frac{\pi \ \displaystyle\max_{0 \le x \le \pi} |\etatrig''(x)|}{2n  \displaystyle\left(n- \max_{0 \le x \le \pi}|\etatrig'(x)|\right)} 
\le 
\frac{\pi(\ka^2-1)}{4n(n-\ka)} 
\le 
\frac{\pi \ka^2}{2n^2} < 1,
\]
which implies the quadratic convergence
with upper estimate~\eqref{eq:quadratic_newton_estimate};
see, e.g.,~\cite[Section~2.2]{A1989}
or~\cite[Proposition~26]{GMS2022}.
\end{proof}

The initial condition $y_{\al,n,j}^{(0)}=d_{n,j}$
assures that we start from the correct side of the root.
Otherwise, the rule~\eqref{eq:Newton_sequence_eigval} can yield a point greater than 
$(j-1)\pi/n$ or even than $\pi$.

For $m$ large enough, $y_{\al,n,j}^{(m)}$ approaches to $z_{\al,n,j}$, and the convergence in Theorem~\ref{thm:Newton} becomes quadratic, according to the general theory of Newton's method.

\begin{remark}\label{rem:n<ka}
    For $- \frac{1}{2} < \al < 0$, we have that $\ka>3$.
    Let us explain the situation with the eigenvalues for $3\le n \le \ka$.
    \begin{itemize}
        \item For every even $j$ with $2\le j \le n$ (except for $n=\ka$ and $j=2$), $z_{\al,n,j}$ satisfies equation $h_{\al,n,j}(x) = 0$ and can be computed by the bisection method.
        \item Since $\|\etatrig'\|_\infty / n \ge 1$, we cannot guarantee the convergence of fixed point method for all $j$.
        \item $h_{\al,n,j}'(x)$ can vanish for some $j$ and $x$, and we cannot guarantee the convergence of Newton's method.
    \end{itemize}
\end{remark}

Now we pass to the asymptotic analysis of the eigenvalues $\la_{\al,n,j}$ with even $j$ such that $4\le j\le n$, as $n\to\infty$.

By Theorem~\ref{thm:main_equations}, for every $n\ge N_\al$, and every even $j$ with $4\le j\le n$,
\[
|z_{\al,n,j}-d_{n,j}|\le\frac{\pi}{n}.
\]

The proofs of the next Propositions~\ref{prop:theta_approximation_1} and~\ref{prop:weak_theta_asympt} are very similar to the proofs given in~\cite[Propositions 29 and 30]{GMS2022}. 

    \begin{prop}
\label{prop:theta_approximation_1}
Let $n\ge N_\al$ and $j$ be even with $4\le j\le n$. 
Then
\begin{equation}
\label{eq:theta_approximation_1}
\left|z_{\al,n,j}-\left(d_{n,j}+\frac{\eta_\al(d_{n,j})}{n}\right)\right|
\le \frac{\pi \ka}{n^2}.
\end{equation}

\end{prop}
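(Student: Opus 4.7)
The plan is to exploit that $z_{\al,n,j}$ is the fixed point of the map $x\mapsto d_{n,j}+\etatrig(x)/n$ (Theorem~\ref{thm:main_equations}) and simply compare $z_{\al,n,j}$ with the value obtained by applying this map once to the natural starting point $d_{n,j}$. Writing $w_{\al,n,j}\eqdef d_{n,j}+\etatrig(d_{n,j})/n$, the quantity we need to bound is
\[
z_{\al,n,j}-w_{\al,n,j}
=\frac{\etatrig(z_{\al,n,j})-\etatrig(d_{n,j})}{n}.
\]

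First I would note, straight from Theorem~\ref{thm:main_equations} and the inclusion $z_{\al,n,j}\in\clos(I_{n,j})=[d_{n,j-1},d_{n,j}]$, that
\[
|z_{\al,n,j}-d_{n,j}|\le d_{n,j}-d_{n,j-1}=\frac{\pi}{n}.
\]
Next I would apply the mean value theorem to $\etatrig$ on the segment joining $d_{n,j}$ and $z_{\al,n,j}$, together with the derivative bound $\sup_{0<x<\pi}|\etatrig'(x)|=\ka$ from Proposition~\ref{prop:eta_bound}. This yields
\[
|\etatrig(z_{\al,n,j})-\etatrig(d_{n,j})|\le\ka\,|z_{\al,n,j}-d_{n,j}|\le\frac{\ka\pi}{n}.
\]
Dividing by $n$ gives exactly the bound $\ka\pi/n^2$ claimed in~\eqref{eq:theta_approximation_1}.

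There is no real obstacle: the entire argument is a single pass of fixed-point error estimation, and all the ingredients (the $1/n$-contraction factor, the $\pi/n$ length of $I_{n,j}$, and the sharp sup-norm bound on $\etatrig'$) are already in hand. The only thing to keep in mind is to stay inside $\clos(I_{n,j})$ so that Proposition~\ref{prop:eta_bound} applies and so that the estimate $|z_{\al,n,j}-d_{n,j}|\le\pi/n$ is valid; both are guaranteed by Theorem~\ref{thm:contraction_inner_left}, which also ensures uniqueness of the fixed point used to identify $z_{\al,n,j}$ at the outset.
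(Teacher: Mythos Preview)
Your argument is correct and is essentially the approach the paper intends: it refers the proof to \cite[Proposition~29]{GMS2022}, and the text immediately preceding the proposition already records the key step $|z_{\al,n,j}-d_{n,j}|\le\pi/n$, after which one fixed-point iteration together with $\|\etatrig'\|_\infty=\ka$ from Proposition~\ref{prop:eta_bound} gives~\eqref{eq:theta_approximation_1} exactly as you wrote.
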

    \begin{prop}
\label{prop:weak_theta_asympt}
There exists $C_3(\al)>0$ such that
for every $n\ge N_\al$ and every even $j$ with $4\le j\le n$,
\begin{equation}\label{eq:weak_theta_asympt}
z_{\al,n,j}
=d_{n,j}
+\frac{\eta_\al(d_{n,j})}{n}
+\frac{\eta_\al(d_{n,j})\eta_\al'(d_{n,j})}{n^2}
+r_{\al,n,j},
\end{equation}
where $|r_{\al,n,j}|\le\frac{C_3(\al)}{n^3}$.
\end{prop}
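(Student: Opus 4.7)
The plan is to bootstrap from the first-order approximation of Proposition~\ref{prop:theta_approximation_1} via a second-order Taylor expansion of $\etatrig$ at the grid point $d_{n,j}$. Write $w_{\al,n,j}\eqdef z_{\al,n,j}-d_{n,j}$. By the fixed-point form of the characteristic equation (Theorem~\ref{thm:main_equations}),
\begin{equation*}
    w_{\al,n,j}=\frac{\etatrig(d_{n,j}+w_{\al,n,j})}{n}.
\end{equation*}
Since $z_{\al,n,j}\in\clos(I_{n,j})\subseteq[0,\pi]$, we have the crude bound $|w_{\al,n,j}|\le \pi/n$, and in fact Proposition~\ref{prop:theta_approximation_1} sharpens this to $w_{\al,n,j}=\etatrig(d_{n,j})/n+\rho_{\al,n,j}$ with $|\rho_{\al,n,j}|\le \pi\ka/n^2$.

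Now apply Taylor's theorem at $d_{n,j}$ with second-order remainder: there exists $\xi$ between $d_{n,j}$ and $z_{\al,n,j}$ with
\begin{equation*}
    \etatrig(d_{n,j}+w_{\al,n,j})
    =\etatrig(d_{n,j})+\etatrig'(d_{n,j})\,w_{\al,n,j}+\tfrac{1}{2}\etatrig''(\xi)\,w_{\al,n,j}^{2}.
\end{equation*}
Dividing by $n$ and substituting $w_{\al,n,j}=\etatrig(d_{n,j})/n+\rho_{\al,n,j}$ into the linear term gives
\begin{equation*}
    w_{\al,n,j}=\frac{\etatrig(d_{n,j})}{n}+\frac{\etatrig(d_{n,j})\,\etatrig'(d_{n,j})}{n^{2}}+\frac{\etatrig'(d_{n,j})\,\rho_{\al,n,j}}{n}+\frac{\etatrig''(\xi)\,w_{\al,n,j}^{2}}{2n}.
\end{equation*}
So I define $r_{\al,n,j}$ as the sum of the last two terms and it remains to bound it by $C_3(\al)/n^{3}$.

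For the first error term, Proposition~\ref{prop:eta_bound} gives $|\etatrig'(d_{n,j})|\le\ka$, and Proposition~\ref{prop:theta_approximation_1} gives $|\rho_{\al,n,j}|\le \pi\ka/n^{2}$, so this term is at most $\pi\ka^{2}/n^{3}$. For the quadratic remainder, Proposition~\ref{prop:eta_bound} again provides $|\etatrig''(\xi)|\le(\ka^{2}-1)/2$, and $|w_{\al,n,j}|\le \pi/n$, so this term is at most $\pi^{2}(\ka^{2}-1)/(4n^{3})$. Setting
\begin{equation*}
    C_3(\al)\eqdef \pi\ka^{2}+\frac{\pi^{2}(\ka^{2}-1)}{4}
\end{equation*}
yields the claimed bound, and adding $d_{n,j}$ to both sides of the identity for $w_{\al,n,j}$ gives~\eqref{eq:weak_theta_asympt}.

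The only real delicacy is making sure the Taylor remainder is controlled uniformly in $j$: this is where Proposition~\ref{prop:eta_bound} (uniform boundedness of $\etatrig'$ and $\etatrig''$ on $(0,\pi)$) does the work, so no step should present a serious obstacle. The main bookkeeping point is to resist the temptation to feed the full second-order formula back in recursively; since we only want an $O(1/n^{3})$ remainder, the first-order substitution from Proposition~\ref{prop:theta_approximation_1} into the linear term suffices, while the quadratic term is already of order $1/n^{3}$ from the crude bound $|w_{\al,n,j}|\le \pi/n$.
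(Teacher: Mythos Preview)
Your proof is correct and follows essentially the same bootstrap approach as the paper, which defers the details to \cite[Proposition~30]{GMS2022}: apply the fixed-point equation $w=\etatrig(d_{n,j}+w)/n$, Taylor-expand $\etatrig$ to second order at $d_{n,j}$, feed the first-order approximation from Proposition~\ref{prop:theta_approximation_1} into the linear term, and bound the two residual pieces using the uniform bounds on $\etatrig'$ and $\etatrig''$ from Proposition~\ref{prop:eta_bound}. Your explicit constant $C_3(\al)=\pi\ka^2+\pi^2(\ka^2-1)/4$ is a pleasant bonus over the paper's bare existence statement.
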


\begin{proof}[Proof of Theorem~\ref{thm:inner_eigvals_expansion}]
    Substituting~\eqref{eq:weak_theta_asympt} into $g$
    and using Taylor expansion of $g$ around $d_{n,j}$, we obtain the asymptotic expansion~\eqref{eq:laasympt_w} with error bound~\eqref{eq:weak_lambda_asympt_K}.
\end{proof}

\section{First eigenvalue}
\label{sec:out_eig_left}

In this section, we suppose that $\al<0$ and $n\ge N_\al$, and we analyze the behavior of $\la_{\al,n,1}$ as $n\to\infty$.
Recall that $\ka$, $\phihyp$, and $N_\al$ are defined respectively by~\eqref{eq:ka},~\eqref{eq:phi}, and~\eqref{eq:N_al}.


\begin{proof}[Proof of Theorem~\ref{thm:main_equations}]
Let $n\ge N_\al$. 
If $\la<0$, we use the change of variable $\la=g_-(x)$ with $x\in(0,\infty)$, and obtain~\eqref{eq:char_pol_gminus}.
Then, $D_{\al,n}(g_-(x)) =0$ takes the form $q_{\al,n}(2\cosh(x/2))=0$, i.e.,
\begin{equation}\label{eq:tangents_hyp_ls}
\tanh\frac{n x}{2} = \ka\tanh\frac{x}{2}.
\end{equation}
By Theorem~\ref{thm:localization_eigvals_left}, there is a unique solution in $(0,\infty)$
of~\eqref{eq:tangents_hyp_ls}, namely $\smin$. 
Dividing both sides of~\eqref{eq:tangents_hyp_ls} by $\ka$ and applying $\arctanh$, we rewrite this equation in the form~\eqref{eq:eq_first}. 
The second part of Theorem~\ref{thm:main_equations} is proved in the beginning of Section~\ref{sec:inner_eigvals_left}.
\end{proof}

The main advantage of equation~\eqref{eq:tangents_hyp_ls} is that $\phihyp$ is a ``very slow function'' for big values of $n$.
A straightforward computation yields
\begin{align}
    \label{eq:zeta_left_der_1}
    \phihyp^\prime(x) & = \frac{n\ka}{\left(\ka^2-\tanh^2\frac{n x}{2}\right)\cosh^2\frac{n x}{2}}=\frac{n\al(\al-1)}{\al^2+(1-2\al)\cosh^2\frac{n x}{2}}, 
    \\
    \label{eq:zeta_left_der_2}
    \phihyp^{\prime\prime}(x) & = -\frac{n^2\al(\al-1)(1-2\al) \cosh\frac{nx}{2} \sinh\frac{nx}{2}}{\left(\al^2+(1-2\al)\cosh^2\frac{n x}{2}\right)^2}.
\end{align}
Recall that $\Omal$ and $\omal$ are defined by~\eqref{eq:Om_al} and~\eqref{eq:om_al}.
Define
\[
\lbnd \eqdef \frac{2}{ n }\arccosh\sqrt{\frac{n \al(\al-1) - \al^2}{1-2\al}}.
\]

\begin{prop}\label{prop:properties_zeta}
Let $n \ge N_\al$. 
Then $\phihyp$ has the following properties.
\begin{enumerate}
    \item $\phihyp$  is strictly increasing and strictly concave.
    \item $\phihyp'(\lbnd) = 1$; moreover, $\phihyp'> 1$ on $[0,\lbnd)$ and $\phihyp' < 1$ on $(\lbnd,+\infty)$.
    \item $\lim_{x\to+\infty}\phihyp(x) = \omal$.
    \item  $\smin$ is the unique fixed point of $\phihyp$ on $(0,+\infty)$.
    \item $\phihyp(x) > x$ for every $x$ in $(0,\lbnd]$.
    \item $\lbnd <\smin$.
\end{enumerate}
\end{prop}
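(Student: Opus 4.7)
The plan is to read parts (1)--(3) directly off the explicit formulas~\eqref{eq:zeta_left_der_1}--\eqref{eq:zeta_left_der_2} for $\phihyp'$ and $\phihyp''$, and then to obtain parts (4)--(6) from a short analysis of the auxiliary function $F(x)\eqdef\phihyp(x)-x$ on $[0,\infty)$.

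For part (1), since $\al<0$ both $\al(\al-1)$ and $1-2\al$ are strictly positive, so \eqref{eq:zeta_left_der_1} gives $\phihyp'>0$ on $[0,\infty)$ and \eqref{eq:zeta_left_der_2} gives $\phihyp''<0$ on $(0,\infty)$. For part (2), setting $\phihyp'(x)=1$ in~\eqref{eq:zeta_left_der_1} yields $\cosh^2(nx/2)=(n\al(\al-1)-\al^2)/(1-2\al)$; a short sign chase (dividing successively by $\al-1<0$ and by $\al<0$) rewrites the condition that this right-hand side is $\ge 1$ as $n\ge\ka$, which holds strictly whenever $n\ge N_\al$, so $\lbnd$ is well defined and strictly positive and is characterized as the unique point where $\phihyp'=1$. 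The strict concavity from part (1) makes $\phihyp'$ strictly decreasing, hence $\phihyp'>1$ on $[0,\lbnd)$ and $\phihyp'<1$ on $(\lbnd,\infty)$; this is consistent with $\phihyp'(0)=n/\ka>1$ and $\phihyp'(x)\to 0$ as $x\to\infty$. For part (3), $\tanh(nx/2)\to 1$ as $x\to\infty$ combined with the identity $2\arctanh(y)=\log\frac{1+y}{1-y}$ gives $\lim_{x\to\infty}\phihyp(x)=\log\frac{\ka+1}{\ka-1}$; substituting $\ka=(\al-1)/\al$ one checks $(\ka+1)/(\ka-1)=1-2\al$, so the limit equals $\omal$.

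For parts (4)--(6), consider $F(x)=\phihyp(x)-x$. By the previous items, $F(0)=0$, $F$ is strictly concave, $F'(x)=\phihyp'(x)-1$ is positive on $[0,\lbnd)$, zero at $\lbnd$, and negative on $(\lbnd,\infty)$, and $F(x)\to -\infty$ as $x\to\infty$. Integrating $F'>0$ over $[0,\lbnd)$ and using continuity at $\lbnd$ gives $F(x)>0$ for $x\in(0,\lbnd]$, which is part (5). In particular $F(\lbnd)>0$, while $F$ is strictly decreasing from this positive value to $-\infty$ on $[\lbnd,\infty)$; the intermediate value theorem produces exactly one zero of $F$ in $(\lbnd,\infty)$, and by Theorem~\ref{thm:main_equations} that zero is $\smin$, so $\lbnd<\smin$, which is part (6). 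For uniqueness of the fixed point in part (4), any fixed point $x$ of $\phihyp$ satisfies $x=\phihyp(x)<\omal$ by part (3), so uniqueness on $(0,\infty)$ reduces to the uniqueness on $(0,\omal)$ already established in Theorem~\ref{thm:main_equations}.

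The only mildly delicate step is the sign manipulation that yields the equivalence $(n\al(\al-1)-\al^2)/(1-2\al)\ge 1\Leftrightarrow n\ge\ka$ in part (2), which guarantees that the argument of $\arccosh$ in the definition of $\lbnd$ is legitimate; once this is in place, everything else follows by direct calculation from the explicit derivative formulas, the shape of $F$, and the already proved Theorem~\ref{thm:main_equations}.
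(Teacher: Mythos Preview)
Your proof is correct and follows essentially the same approach as the paper: parts (1)--(3) are read off the derivative formulas, part (4) is reduced to Theorem~\ref{thm:main_equations}, part (5) is the mean value inequality applied to $\phihyp$ (the paper phrases it as ``apply the mean value theorem to $\phihyp$ on $[0,x]$'', which is exactly your integration of $F'>0$), and part (6) is the intermediate value theorem on $x\mapsto x-\phihyp(x)=-F(x)$. Your version is slightly more explicit, in particular in checking that the argument of $\arccosh$ in the definition of $\lbnd$ exceeds $1$ precisely when $n>\ka$, which the paper leaves implicit.
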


\begin{proof}
    Properties 1 and 2 follow from~\eqref{eq:phi},~\eqref{eq:zeta_left_der_1}, and~\eqref{eq:zeta_left_der_2}.
    The limit in 3 is easy to compute taking into account that $\tanh(\omal/2) = \ka^{-1}$.
    Property 4 follows from Theorem~\ref{thm:main_equations}.

    To prove 5, we apply the mean value theorem to $\phihyp$ on the segment $[0,x]$, taking into account property 2.
    
     Let us prove 6.     
     Due to property 5, we have that $\phihyp(x) > x$ for every $x$ in $(0,\lbnd]$.
     Hence, the fixed point of $\phihyp$ cannot belong to $(0,\lbnd]$.
     On the other hand, the function
     $x\mapsto x-\phihyp(x)$ is continuous and changes its sign on $[\lbnd,+\infty)$.
     Therefore, $\phihyp$ has a fixed point on $(\lbnd,+\infty)$.
\end{proof}

Figure~\ref{fig:phiAndIdentity} shows $\phihyp$ together with the identity function.

\begin{figure}[htb]
\centering
\includegraphics{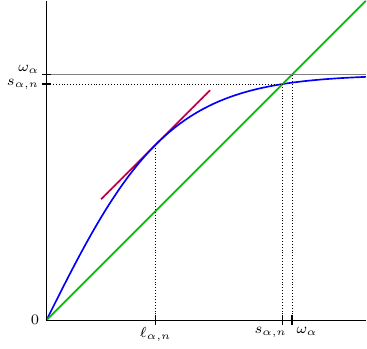}
\caption{Plot of $\phihyp$ (blue), tangent line to the graph of $\phihyp$ at $\lbnd$ (purple), and plot of $x\mapsto x$ (green), for $\al = -1/2$ and $n= 6$.
\label{fig:phiAndIdentity}
}
\end{figure}

\begin{thm}\label{thm:phi_fixed_point}
    Let $n\ge N_\al$.
    Then $\phihyp$ is a contraction on $[\phihyp(\lbnd),\omal]$.
\end{thm}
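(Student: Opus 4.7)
The plan is to derive the contraction inequality from a uniform upper bound on $\phihyp'$ strictly below $1$. Since $\phihyp$ is smooth, it suffices to show that
\[
L := \sup_{x\in[\phihyp(\lbnd),\omal]}\phihyp'(x) < 1,
\]
after which the mean value theorem immediately gives $|\phihyp(x)-\phihyp(y)|\le L|x-y|$ on the interval.

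The essential ingredient is the strict inequality $\phihyp(\lbnd)>\lbnd$, which is property~5 of Proposition~\ref{prop:properties_zeta} applied at $x=\lbnd$. This places the interval $[\phihyp(\lbnd),\omal]$ strictly to the right of $\lbnd$, so property~2 gives $\phihyp'(x)<1$ pointwise on the interval. To upgrade this to a uniform bound, I would invoke concavity of $\phihyp$ (property~1), which makes $\phihyp'$ nonincreasing; hence its supremum on the compact interval is attained at the left endpoint, yielding $L=\phihyp'(\phihyp(\lbnd))<1$.

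There is no substantial analytical obstacle; the proof is essentially a short assembly of the facts already collected in Proposition~\ref{prop:properties_zeta}. The subtle point worth highlighting is that the interval must start at $\phihyp(\lbnd)$ rather than at $\lbnd$: since $\phihyp'(\lbnd)=1$, taking $\lbnd$ as the left endpoint would give a Lipschitz constant of exactly $1$ and fail to yield a contraction. Pushing the endpoint out to $\phihyp(\lbnd)$ exploits the fact that $\phihyp$ strictly moves $\lbnd$ to the right, and concavity does the rest. Finally, if one wishes to apply Banach's fixed-point theorem to conclude convergence to $\smin$, one additionally wants $\phihyp$ to map the interval into itself; by monotonicity this reduces to $\phihyp(\phihyp(\lbnd))\ge\phihyp(\lbnd)$ and $\phihyp(\omal)\le\omal$. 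The first holds because $\phihyp(\lbnd)<\phihyp(\smin)=\smin$ by property~6 and monotonicity, hence property~5 (or rather its extension to $(0,\smin)$ via concavity and the fixed-point structure) applies; the second is immediate from strict monotonicity and property~3.
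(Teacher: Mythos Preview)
Your proposal is correct and follows essentially the same route as the paper: use property~5 to get $\phihyp(\lbnd)>\lbnd$, then strict concavity so that $\phihyp'$ attains its maximum on the interval at the left endpoint with value $\phihyp'(\phihyp(\lbnd))<\phihyp'(\lbnd)=1$, and check the self-map property. One small simplification: for $\phihyp(\phihyp(\lbnd))\ge\phihyp(\lbnd)$ you need not invoke property~6 or any extension of property~5---just apply the strictly increasing $\phihyp$ to the inequality $\phihyp(\lbnd)>\lbnd$ directly, which is exactly what the paper does.
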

\begin{proof}
    As we have already mentioned in Proposition~\ref{prop:properties_zeta}, $\phihyp$ strictly increases, and $\phihyp'$ strictly decreases.
    Moreover, by property 5 from Proposition~\ref{prop:properties_zeta}, $\lbnd < \phihyp(\lbnd)$.
    Therefore, for every $x$ in $[\phihyp(\lbnd),\omal]$, 
    \[
    \phihyp(\lbnd) < \phihyp(\phihyp(\lbnd)) \le  \phihyp(x) < \omal,
    \]
    and
    \[     
        0<\phihyp'(x) \le \phihyp'(\phihyp(\lbnd)) <\phihyp'(\lbnd) =1.
    \]
    So, $\phihyp([\phihyp(\lbnd),\omal])\subseteq [\phihyp(\lbnd),\omal]$, and $\phihyp'(\phihyp(\lbnd))$ is a Lipschitz coefficient for $\phihyp$ on $[\phihyp(\lbnd),\omal]$.
\end{proof}

For every $n\ge N_\al$, we define $\fmin\colon [0,+\infty)\to\bR$ by
\begin{equation*}
\fmin(x) \eqdef x-\phihyp(x).
\end{equation*}
Figure~\ref{fig:xminusphi} shows $f_{\al,n}$.

\begin{figure}[htb]
\centering
\includegraphics{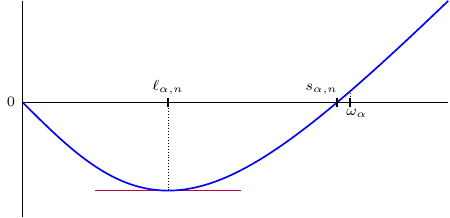}
\caption{Plot of $f_{\al,n}$ (blue) and tangent line to the graph of $f_{\al,n}$ at $\lbnd$ (purple), for $\al = -1/2$ and $n= 6$.
\label{fig:xminusphi}
}
\end{figure}

\begin{prop}\label{prop:f_properties}
Let $n\ge N_\al$. 
Then $\fmin$ has the following properties.
\begin{enumerate}
    \item $\fmin''>0$ on $[0,+\infty)$, and $\fmin$ is strictly convex.
    \item $\fmin'$ is strictly negative on $[0,\lbnd)$ and strictly positive on $(\lbnd,+\infty)$.
    \item $\lim_{x\to\infty} \fmin(x) = +\infty$.
    \item $\smin$ is the only root of $\fmin$ in $(0,+\infty)$.
    \item\label{item1} $\fmin$ is strictly negative on $(0,\smin)$ and strictly positive on $(\smin,+\infty)$.
\end{enumerate}
\end{prop}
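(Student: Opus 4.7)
The plan is to derive all five properties directly from the formula $\fmin(x)=x-\phihyp(x)$ together with the structural information about $\phihyp$ already collected in Proposition~\ref{prop:properties_zeta}. Differentiating once and twice gives
\[
\fmin'(x) = 1 - \phihyp'(x), \qquad \fmin''(x) = -\phihyp''(x).
\]
For item~1, strict concavity of $\phihyp$ (property~1 of Proposition~\ref{prop:properties_zeta}) together with the explicit formula~\eqref{eq:zeta_left_der_2} gives $\phihyp''(x)<0$ on $(0,+\infty)$, so $\fmin$ is strictly convex. For item~2, property~2 of Proposition~\ref{prop:properties_zeta} states that $\phihyp'(\lbnd)=1$, with $\phihyp'>1$ on $[0,\lbnd)$ and $\phihyp'<1$ on $(\lbnd,+\infty)$, and these inequalities transfer directly to $1-\phihyp'$. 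Item~3 is immediate from property~3 of Proposition~\ref{prop:properties_zeta}: since $\phihyp(x)\to\omal$ as $x\to+\infty$, we have $\fmin(x)=x-\phihyp(x)\to+\infty$.

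Next I would establish items~4 and~5 together by reading off the global shape of $\fmin$. The key auxiliary observation is $\fmin(0)=0$, because $\phihyp(0)=2\arctanh(0)=0$. Combined with items~1 and~2, this tells us that $\fmin$ strictly decreases on $[0,\lbnd]$ and strictly increases on $[\lbnd,+\infty)$; in particular $\fmin(\lbnd)<\fmin(0)=0$. Applied together with item~3, the intermediate value theorem and the strict monotonicity on $[\lbnd,+\infty)$ produce exactly one root of $\fmin$ in $(\lbnd,+\infty)$, and property~4 of Proposition~\ref{prop:properties_zeta} (or equivalently Theorem~\ref{thm:main_equations}) identifies this root as $\smin$. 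This settles item~4. Item~5 then follows from the V-shape: $\fmin$ is strictly negative on $(0,\lbnd]$ because it lies strictly below $\fmin(0)=0$, remains strictly negative on $[\lbnd,\smin)$ because it has not yet crossed zero on the increasing branch, and is strictly positive on $(\smin,+\infty)$.

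Essentially all of the analytic work has been concentrated in Proposition~\ref{prop:properties_zeta}, so the present proof is bookkeeping rather than analysis and I do not anticipate any genuine obstacle. The only point that requires momentary care is the endpoint $x=0$: one must verify $\fmin(0)=0$ to anchor the uniqueness argument and to rule out any additional root in $(0,\lbnd]$, and one must note that the vanishing of $\phihyp''$ at the single point $x=0$ does not disturb strict convexity of $\fmin$ on $[0,+\infty)$.
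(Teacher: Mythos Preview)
Your proposal is correct and follows essentially the same approach as the paper: both derive items 1--4 directly from Proposition~\ref{prop:properties_zeta} and obtain item~5 from the resulting shape of $\fmin$ together with the intermediate value theorem. Your write-up simply makes explicit the steps that the paper's two-line proof leaves to the reader, including the useful anchoring observation $\fmin(0)=0$ and the remark about $\phihyp''(0)=0$.
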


\begin{proof}
Properties 1--4 follow from Proposition~\ref{prop:properties_zeta}.
To prove property 5, we also apply the intermediate value theorem.
\end{proof}

\begin{thm}[convergence of Newton's method applied to $f_{\al,n}$]\label{thm:linear_conv_first_eigval}
Let $n\ge N_\al$. 
Then the sequence defined $(y_{\al,n}^{(m)})_{m=0}^{\infty}$ by
\begin{equation*}
y_{\al,n}^{(0)} \eqdef \omal ,\qquad
y_{\al,n}^{(m)}
\eqdef y_{\al,n}^{(m-1)} - \frac{\fmin\left(y_{\al,n}^{(m-1)}\right)}{\fmin'\left(y_{\al,n}^{(m-1)}\right)}\quad (m\ge1),
\end{equation*}
takes values in $[\smin,\omal]$ and converges to $\smin$. 
\end{thm}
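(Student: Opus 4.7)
The plan is to apply Proposition~\ref{prop:convergence_f_convex_y0>c} directly, with $F = \fmin$, root $c = \smin$, interval $[a,b] = [\smin, \omal]$, and starting point $y^{(0)} = \omal$. The theorem's two claims — that the iterates stay in $[\smin, \omal]$ and that they converge to $\smin$ — will follow immediately from the proposition's conclusion, so all that remains is bookkeeping to check the hypotheses on the chosen interval.

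The first point to verify is that the starting point lies strictly to the right of the root, i.e., $\smin < \omal$. This is immediate from property~3 of Proposition~\ref{prop:properties_zeta}: since $\phihyp(x) < \omal$ for every $x \in [0, \infty)$ and $\smin$ is a fixed point of $\phihyp$, we have $\smin = \phihyp(\smin) < \omal$. In particular $y^{(0)} = \omal$ belongs to $[\smin, \omal]$, and $\fmin(\omal) > 0$.

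Next I would read off the remaining hypotheses from the structural propositions already established in the section. Differentiability and strict convexity of $\fmin$ on $[\smin, \omal]$ are provided by property~1 of Proposition~\ref{prop:f_properties} (namely $\fmin'' > 0$ on $[0, \infty)$). Strict positivity of $\fmin'$ on $[\smin, \omal]$ follows by combining property~2 of Proposition~\ref{prop:f_properties} ($\fmin' > 0$ on $(\lbnd, \infty)$) with property~6 of Proposition~\ref{prop:properties_zeta} ($\lbnd < \smin$). Finally, $\fmin(\smin) = 0$ is property~4 of Proposition~\ref{prop:f_properties}.

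With these verifications in hand, Proposition~\ref{prop:convergence_f_convex_y0>c} applied to $F = \fmin$ on $[\smin, \omal]$ with $c = \smin$ and $y^{(0)} = \omal$ yields exactly the desired conclusion: the Newton iterates are well-defined, lie in $[\smin, \omal]$, decrease monotonically, and converge to $\smin$. The only real obstacle is thus not in the Newton analysis itself but in the qualitative groundwork laid by Propositions~\ref{prop:properties_zeta} and~\ref{prop:f_properties}; once those structural facts are secured, the theorem reduces to a one-line invocation of the standard convex Newton template. If desired, the same proposition also provides a linear-rate bound of the form $\omal - \smin \le (\omal - \smin)(1 - \fmin'(\smin)/\fmin'(\omal))^m$, which could be made explicit using~\eqref{eq:zeta_left_der_1}, but this quantitative refinement is not needed for the statement proved here.
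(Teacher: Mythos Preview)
Your proof is correct and follows essentially the same route as the paper's: both reduce the theorem to Proposition~\ref{prop:convergence_f_convex_y0>c} after verifying $\lbnd<\smin<\omal$, $\fmin'>0$, and $\fmin''>0$ on $[\smin,\omal]$ via Propositions~\ref{prop:properties_zeta} and~\ref{prop:f_properties}. The only (harmless) difference is that the paper cites Theorem~\ref{thm:localization_eigvals_left} for $\smin<\omal$, whereas you extract it from property~3 of Proposition~\ref{prop:properties_zeta}; your aside about the linear-rate bound has a small typo (the left side should be $y_{\al,n}^{(m)}-\smin$), but as you note it is not needed.
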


\begin{proof}
Theorem~\ref{thm:localization_eigvals_left}  and Proposition~\ref{prop:properties_zeta} yield $\lbnd<\smin<\omal$. 
By Proposition~\ref{prop:f_properties}, $\fmin'>0$ and $\fmin''>0$ on $[\smin,\omal]$.
The conclusion now follows from Proposition~\ref{prop:convergence_f_convex_y0>c}. 
\end{proof}

The convergence in Theorem~\ref{thm:linear_conv_first_eigval} is at least linear convergence, but it becomes quadratic after a finite number of steps.

\begin{prop}\label{prop:fixed_point_increasing}
The sequence $(\smin)_{n\ge N_\al}$ is strictly increasing: if  $n>m\ge N_\al$, then $\smin >s_{\al,m}$.
\end{prop}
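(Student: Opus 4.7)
The plan is to exploit the characterization of $\smin$ as the unique positive root of $\fmin$ (item 4 of Proposition~\ref{prop:f_properties}), together with the sign pattern of $\fmin$ around that root (item 5 of Proposition~\ref{prop:f_properties}). Specifically, to compare $\smin$ with $s_{\al,m}$ it suffices to evaluate the sign of $\fmin(s_{\al,m})$: if this quantity is strictly negative, then $s_{\al,m}$ lies in the interval $(0,\smin)$ on which $\fmin$ is negative, yielding $s_{\al,m}<\smin$.

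The key lemma is therefore a monotonicity statement in the index $n$, namely that for every fixed $x>0$ the map $n\mapsto \phihyp(x)$ is strictly increasing (thinking of $n$ as a real parameter, or simply comparing consecutive integers). This is easy: for $x>0$ the function $n\mapsto \tanh(nx/2)$ is strictly increasing since its derivative equals $(x/2)\sech^2(nx/2)>0$; since $\ka^{-1}\in(0,1)$ and $\arctanh$ is strictly increasing on $(-1,1)$, the composition $\phihyp(x)=2\arctanh(\ka^{-1}\tanh(nx/2))$ inherits strict monotonicity in $n$. Equivalently, $\fmin(x)=x-\phihyp(x)$ is strictly decreasing in $n$ for each fixed $x>0$.

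With this in hand the proof is immediate. Fix $m\ge N_\al$ and let $n>m$. By definition $f_{\al,m}(s_{\al,m})=0$, and by the monotonicity just established
\[
\fmin(s_{\al,m})=s_{\al,m}-\phihyp(s_{\al,m})<s_{\al,m}-\phi_{\al,m}(s_{\al,m})=f_{\al,m}(s_{\al,m})=0.
\]
Applying item 5 of Proposition~\ref{prop:f_properties} to $\fmin$, a strictly negative value can only be attained in $(0,\smin)$, so $s_{\al,m}<\smin$, as desired.

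I do not expect a serious obstacle here: the argument is a standard ``root‑tracking'' comparison, and the only nontrivial ingredient (the $n$‑monotonicity of $\phihyp$ at fixed $x$) reduces to monotonicity of $\tanh$ and $\arctanh$. If one prefers to treat $n$ only as an integer, the inequality $\tanh((n+1)x/2)>\tanh(nx/2)$ already suffices, and induction on $n-m$ recovers the general case.
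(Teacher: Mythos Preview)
Your proof is correct and follows essentially the same approach as the paper: both show that $\phihyp(x)$ is strictly increasing in $n$ for fixed $x>0$, deduce $\fmin(s_{\al,m})<f_{\al,m}(s_{\al,m})=0$, and then invoke item~5 of Proposition~\ref{prop:f_properties} to conclude $s_{\al,m}<\smin$. You supply a bit more justification for the monotonicity in $n$ (via $\tanh$ and $\arctanh$) than the paper does, but the argument is otherwise identical.
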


\begin{proof}
It follows directly from~\eqref{eq:eta} that $\varphi_{\al,m}(x)$ strictly increases with respect to $m$.
Thereby, for $n>m\ge N_\al$ and for every $x>0$,
\[
\fmin(x) < f_{\al,m}(x).
\]
In particular,
\[
\fmin(s_{\al,m}) < f_{\al,m}(s_{\al,m}) = 0.
\]
Property~\ref{item1} from Proposition~\ref{prop:f_properties} implies that $\smin>s_{\al,m}$.
\end{proof}

\begin{cor}\label{cor:s>lN}
    For every $n\ge N_\al$, $\smin > \ell_{\al,N_\al}$.
\end{cor}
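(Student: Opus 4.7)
The plan is to chain together two results already established in this section: property 6 of Proposition~\ref{prop:properties_zeta}, which says $\lbnd < \smin$ for each $n \ge N_\al$, and the monotonicity of the sequence $(\smin)_{n \ge N_\al}$ from Proposition~\ref{prop:fixed_point_increasing}. In other words, the corollary is a routine consequence of the two preceding results, and no further analytic work on $\phihyp$ or $\fmin$ is needed.

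Concretely, I would split into two cases. First, when $n = N_\al$, the conclusion $\smin > \ell_{\al,N_\al}$ is literally property 6 of Proposition~\ref{prop:properties_zeta} applied at $n = N_\al$. Second, when $n > N_\al$, Proposition~\ref{prop:fixed_point_increasing} gives $\smin > s_{\al,N_\al}$, and applying property 6 of Proposition~\ref{prop:properties_zeta} at $N_\al$ gives $s_{\al,N_\al} > \ell_{\al,N_\al}$; chaining the two inequalities yields the conclusion.

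There is no serious obstacle here: the argument is a two-line combination of prior statements. The only minor point to keep in mind is that $\ell_{\al,N_\al}$ (as opposed to $\lbnd$) is the fixed quantity we are comparing against, so one must explicitly invoke the monotonicity $\smin > s_{\al,N_\al}$ rather than attempting to compare $\smin$ with $\lbnd$ and then $\lbnd$ with $\ell_{\al,N_\al}$ (no monotonicity in $n$ has been established for $\lbnd$).
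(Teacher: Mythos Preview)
Your proposal is correct and follows essentially the same approach as the paper: combine property~6 of Proposition~\ref{prop:properties_zeta} (applied at $N_\al$) with the monotonicity from Proposition~\ref{prop:fixed_point_increasing}. If anything, your explicit case split is slightly more careful than the paper's one-line chain $\smin > s_{\al,N_\al} > \ell_{\al,N_\al}$, which tacitly reads as $\ge$ in the first inequality when $n = N_\al$.
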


\begin{proof}
    Indeed, $\smin > s_{\al,N_\al} > \ell_{\al,N_\al}$ by property~6 from Proposition~\ref{prop:properties_zeta} and Propostion~\ref{prop:fixed_point_increasing}.
\end{proof}

\begin{prop}\label{prop:thtn_convergence_omal_exponentially_left}
If $n\ge N_\al$, then
\begin{equation}\label{eq:thtminus_exp_conv_omal}
     0 \le \omal - \smin \le C_4(\al) e^{-n \omal},
\end{equation}
where
\begin{equation*}
    C_4(\al) \eqdef  \frac{4\cosh^2\frac{\omal}{2}}{\ka}  \exp\left(\frac{4\cosh^2\frac{\omal}{2}}{e\, \ell_{\al,N_\al} \ka}\right) = \frac{4\ka}{(\ka^2-1)}\exp\left(\frac{4\ka}{e\, (\ka^2-1) \ell_{\al,N_\al} }\right) .
\end{equation*}
\end{prop}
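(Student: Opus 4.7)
The plan is to derive an implicit inequality of the form $\omal - \smin \le a\, e^{-n\smin}$ from the fixed-point equation and then unfold it using the uniform lower bound $\smin > \ell_{\al,N_\al}$ from Corollary~\ref{cor:s>lN}. The nonnegativity $\omal - \smin \ge 0$ is immediate from Proposition~\ref{prop:properties_zeta}, since $\phihyp(x) < \omal$ for all $x \in [0,+\infty)$ and $\smin$ is a fixed point of $\phihyp$.

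To produce a workable closed-form relation between $\omal - \smin$ and $n\smin$, I would start from the two identities $\tanh(\smin/2) = \ka^{-1}\tanh(n\smin/2)$ (the fixed-point equation) and $\tanh(\omal/2) = \ka^{-1}$ (noted after~\eqref{eq:om_al}). Subtracting them and applying the standard identity $\tanh A - \tanh B = \sinh(A-B)/(\cosh A\cosh B)$ gives
\[
\sinh\!\left(\tfrac{\omal - \smin}{2}\right) = \frac{2\cosh(\smin/2)}{\sqrt{\ka^2-1}\,(e^{n\smin}+1)}.
\]
Bounding $\cosh(\smin/2) \le \cosh(\omal/2) = \ka/\sqrt{\ka^2-1}$, using $e^{n\smin}+1 > e^{n\smin}$, and the elementary inequality $\sinh y \ge y$ for $y \ge 0$ will then yield the intermediate estimate
\[
\omal - \smin \le a\, e^{-n\smin}, \qquad a \eqdef \tfrac{4\ka}{\ka^2-1} = \tfrac{4\cosh^2(\omal/2)}{\ka}.
\]

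This inequality is self-referential, and the main obstacle is converting it into an explicit bound in $e^{-n\omal}$. My strategy is to bootstrap via Corollary~\ref{cor:s>lN}, which provides the uniform lower bound $\smin > \ell_{\al,N_\al}$. Combined with the classical maximum $t e^{-t} \le 1/e$ applied at $t = n\,\ell_{\al,N_\al}$, this gives the coarse estimate
\[
\omal - \smin \le a\, e^{-n\smin} < a\, e^{-n\ell_{\al,N_\al}} \le \frac{a}{e\, n\, \ell_{\al,N_\al}} = \frac{b}{n},
\]
where $b \eqdef a/(e\,\ell_{\al,N_\al})$ is exactly the exponent appearing in $C_4(\al) = a\, e^{b}$. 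Consequently $n(\omal - \smin) < b$.

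Finally, I would feed this coarse control back into the hyperbolic inequality. Since $e^{-n\smin} = e^{-n\omal}\, e^{n(\omal - \smin)}$ and $n(\omal - \smin) < b$, we obtain
\[
\omal - \smin \le a\, e^{-n\smin} = a\, e^{n(\omal - \smin)}\, e^{-n\omal} < a\, e^{b}\, e^{-n\omal} = C_4(\al)\, e^{-n\omal}.
\]
The whole argument is two-staged: a crude estimate via $\ell_{\al,N_\al}$ is sufficient to control $n(\omal - \smin)$ by a universal constant $b$, which in turn lets the implicit hyperbolic bound $\delta \le a e^{-n\smin}$ be sharpened from the unusable rate $e^{-n\smin}$ to the advertised rate $e^{-n\omal}$, at the price of a constant factor $e^{b}$.
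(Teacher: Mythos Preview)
Your proof is correct and follows essentially the same two-stage strategy as the paper: first obtain the implicit bound $\omal-\smin\le a\,e^{-n\smin}$ with $a=4\ka/(\ka^2-1)$, then bootstrap via Corollary~\ref{cor:s>lN} and the inequality $te^{-t}\le 1/e$ to convert it into $C_4(\al)\,e^{-n\omal}$. The only cosmetic difference is in the derivation of the intermediate bound: the paper applies the mean value theorem to $\tanh(x/2)$ on $[\smin,\omal]$, whereas you use the exact identity $\tanh A-\tanh B=\sinh(A-B)/(\cosh A\cosh B)$ together with $\sinh y\ge y$; both routes produce the same constant $a$ and the remainder of the argument is identical.
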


\begin{proof}
By the mean value theorem applied to $\tanh(x/2)$ on $[\smin,\omal]$, there exists $\xi$ in $(\smin,\omal)$ such that
\[
\tanh\frac{\omal}{2}- \tanh\frac{\smin}{2} = \frac{1}{2\cosh^2\frac{\xi}{2}} (\omal-\smin).
\]
Notice that $\cosh(\xi/2) < \cosh(\omal/2)$. Hence, we can estimate $\omal-\smin$ from above:
\[
\omal-\smin \le 2\cosh^2\frac{\omal}{2}\left(\tanh\frac{\omal}{2}- \tanh\frac{\smin}{2}\right).
\]
Since $\smin$ satisfies~\eqref{eq:tangents_hyp_ls} and $\tanh(\omal/2) = \ka^{-1}$,
\begin{equation}
\label{eq:omega_minus_s_first_upper_bound}
\omal-\smin
\le \frac{2\cosh^2\frac{\omal}{2}}{\ka}\left(1-\tanh\frac{n\smin}{2}\right) \\
\le \frac{4\cosh^2\frac{\omal}{2}}{\ka} e^{-n\smin}.
\end{equation}
By Corollary~\ref{cor:s>lN}, $e^{-n\smin} < e^{-n \ell_{\al,N_\al}}$.
We also have the elementary inequality $xe^{-x}\le 1/e$ for every $x>0$.
By these inequalities and~\eqref{eq:omega_minus_s_first_upper_bound},
\[
n(\omal-\smin) \le \frac{4\cosh^2\frac{\omal}{2}}{\ka} ne^{-n\ell_{\al,N_\al}} = \frac{4\cosh^2\frac{\omal}{2}}{\ell_{\al,N_\al}\ka} n\ell_{\al,N_\al}e^{-n\ell_{\al,N_\al}} \le \frac{4\cosh^2\frac{\omal}{2}}{e\, \ell_{\al,N_\al}\ka}.
\]
Now we combine this inequality with~\eqref{eq:omega_minus_s_first_upper_bound}:
\[
\omal-\smin
\le \frac{4\cosh^2\frac{\omal}{2}}{\ka} e^{-n(\omal-\smin)}
e^{-n\omal}
\le
\frac{4\cosh^2\frac{\omal}{2}}{\ka}  \exp\left(\frac{4\cosh^2\frac{\omal}{2}}{e\, \ell_{\al,N_\al} \ka}\right)
e^{-n\omal}.\qedhere
\]
\end{proof}

Define
 \begin{equation}\label{eq:gammas}
    \ga_{1,\al} \eqdef  \frac{4\ka}{\ka^2-1}, \quad \ga_{2,\al} \eqdef  \frac{4\ka(\ka^2+1)}{(\ka^2-1)^2}.
\end{equation}

\begin{lem}[asymptotic expansion of $\varphi_{\al,1}$]\label{lem:phiExpansion}
As $t$ tends to infinity,
\begin{equation}\label{eq:phi1_expansion}
    \varphi_{\al,1}(t) = \omal - \ga_{1,\al} e^{-t} + \ga_{2,\al} e^{-2t} + O(e^{-3t}).
\end{equation}    
\end{lem}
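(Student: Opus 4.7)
The plan is to reduce the claim to a routine Taylor expansion by first rewriting $\varphi_{\al,1}(t)$ as a difference of logarithms. Concretely, since $2\arctanh(u) = \log\frac{1+u}{1-u}$ and $\tanh(t/2) = (1-\eps)/(1+\eps)$ with $\eps \eqdef e^{-t}$, we get
\[
\varphi_{\al,1}(t)
= \log\frac{\ka+\tanh(t/2)}{\ka-\tanh(t/2)}
= \log\frac{(\ka+1)+(\ka-1)\eps}{(\ka-1)+(\ka+1)\eps}.
\]
Factoring out $(\ka+1)/(\ka-1)$ from this fraction splits the right-hand side as
\[
\log\frac{\ka+1}{\ka-1}
+\log\!\left(1+\frac{\ka-1}{\ka+1}\eps\right)
-\log\!\left(1+\frac{\ka+1}{\ka-1}\eps\right).
\]

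The first step is to identify the constant term with $\omal$. Using $\ka=(\al-1)/\al$ one checks that $(\ka+1)/(\ka-1)=1-2\al$, and so $\log((\ka+1)/(\ka-1))=\omal$ by~\eqref{eq:om_al}.

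Next I would expand each of the two remaining logarithms by $\log(1+x)=x-x^2/2+O(x^3)$, valid uniformly as $\eps\to 0^+$, and subtract them. The coefficient of $\eps$ is
\[
\frac{\ka-1}{\ka+1}-\frac{\ka+1}{\ka-1}=\frac{(\ka-1)^2-(\ka+1)^2}{\ka^2-1}=-\frac{4\ka}{\ka^2-1}=-\ga_{1,\al},
\]
while the coefficient of $\eps^2$ is
\[
\frac{1}{2}\!\left(\left(\tfrac{\ka+1}{\ka-1}\right)^2-\left(\tfrac{\ka-1}{\ka+1}\right)^2\right)
=\frac{(\ka+1)^4-(\ka-1)^4}{2(\ka^2-1)^2}
=\frac{4\ka(\ka^2+1)}{(\ka^2-1)^2}=\ga_{2,\al},
\]
matching the definitions~\eqref{eq:gammas}. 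The $O(\eps^3)$ remainder of each Taylor expansion yields the claimed $O(e^{-3t})$ error.

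There is no real obstacle here; the argument is a short algebraic manipulation followed by two Taylor expansions, and the only thing to verify carefully is the two algebraic identifications $(\ka+1)/(\ka-1)=1-2\al$ and the expressions for $\ga_{1,\al}$, $\ga_{2,\al}$ as rational functions of $\ka$, both of which are already recorded in~\eqref{eq:om_al} and~\eqref{eq:gammas}.
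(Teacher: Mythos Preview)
Your argument is correct. It differs from the paper's proof in its organization: the paper keeps $\varphi_{\al,1}$ in the form $2\arctanh(\ka^{-1}\tanh(t/2))$, writes $\tanh(t/2)=(1-u)/(1+u)$ with $u=e^{-t}$, Taylor-expands $(1-u)/(1+u)$ about $0$, then Taylor-expands $\arctanh$ about the point $\ka^{-1}$, and composes the two expansions. You instead convert $2\arctanh$ to a logarithm at the outset and factor the rational function so that only the standard expansion $\log(1+x)=x-x^2/2+O(x^3)$ is needed, with the coefficients dropping out as differences of powers of $(\ka\pm1)/(\ka\mp1)$. Your route avoids computing the derivatives of $\arctanh$ at a nonzero point and the bookkeeping of composing two series; the paper's route is more mechanical in that it applies Taylor's theorem twice without any preliminary algebraic rewriting. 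Both land on the same identifications of $\omal$, $\ga_{1,\al}$, $\ga_{2,\al}$ already recorded in~\eqref{eq:om_al} and~\eqref{eq:gammas}.
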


\begin{proof}
Since $\tanh(t/2) = (1-e^{-t})/(1+e^{-t})$, 
\[
    \varphi_{\al,1}(t) = \psi(e^{-t}), \quad \text{where} \quad \psi(u) \eqdef \arctanh\left( \ka^{-1} \frac{1-u}{1+u} \right).
\]
We start with the Taylor--Maclaurin expansion of the rational function $u\mapsto (1-u)/(1+u)$ around $0$:
\[
\frac{1-u}{1+u}
=1-\frac{2u}{1+u}
=1-2u+2u^2+O(u^3).
\]
Then, we apply the Taylor expansion of $\arctanh$ around $\ka^{-1}$:
\[
\arctanh(\ka^{-1}+y)
=\arctanh(\ka^{-1})
+\frac{y}{1-\ka^{-2}}
+\frac{\ka^{-1}\,y^2}{(1-\ka^{-2})^2}
+O(y^3).
\]
In the last expansion we substitute $y=2\ka^{-1}(-u+u^2+O(u^3))$ and use the relation $O(y)=O(u)$:
\begin{align*}
\psi(u) &=
2\arctanh\left(\ka^{-1}
+2\ka^{-1}
(-u
+u^2
+O(u^3))\right) 
\\
&=
2\arctanh(\ka^{-1})
+ \frac{4\ka^{-1}}{1-\ka^{-2}}
\left(-u + u^2 + O(u^3)\right)
\\
&\qquad
+\frac{8\ka^{-3}}{(1-\ka^{-2})^2}
\left(-u + u^2 + O(u^3)\right)^2
+ O(u^3).
\end{align*}
Simplifying and taking into account that $\tanh(\omal/2) = \ka^{-1}$, we obtain the Taylor--Maclaurin expansion of $\psi$ around $0$:
\[
\psi(u) = \omal - \ga_{1,\al} u + \ga_{2,\al} u^2 + O(u^3).
\]
Finally, we put $u = e^{-t}$ and obtain~\eqref{eq:phi1_expansion}.
\end{proof}

\begin{thm}[asymptotic expansion of $\smin$]\label{thm:omin_expansion}
As $n$ tends to infinity,
\begin{equation}\label{eq:smin_expansion}
 \smin = \omal -\ga_{1,
 \al}e^{-n\omal} - \ga_{1,
 \al}^2 n e^{-2n\omal} + \ga_{2,
 \al} e^{-2n\omal} + O(n^2e^{-3n\omal}).
\end{equation}
\end{thm}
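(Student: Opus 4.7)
The key observation is the identity $\phihyp(x) = \varphi_{\al,1}(nx)$, which is immediate from definition~\eqref{eq:phi}. Combined with the fixed-point equation $\smin = \phihyp(\smin)$ from Theorem~\ref{thm:main_equations}, this reduces everything to the single-variable expansion from Lemma~\ref{lem:phiExpansion}. So the plan is to set
\[
\delta_n \eqdef \omal - \smin,
\qquad
E_n \eqdef e^{-n\omal},
\]
and write the fixed-point equation as
\[
\delta_n
= \omal - \varphi_{\al,1}(n\smin)
= \ga_{1,\al}\, e^{-n\smin} - \ga_{2,\al}\, e^{-2n\smin} + O\!\left(e^{-3n\smin}\right)
\]
by Lemma~\ref{lem:phiExpansion}, and then bootstrap it using the starting estimate $\delta_n = O(E_n)$ provided by Proposition~\ref{prop:thtn_convergence_omal_exponentially_left}.

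The bootstrap step itself is a two-pass expansion. Since $n\smin = n\omal - n\delta_n$, I would write $e^{-n\smin} = E_n\, e^{n\delta_n}$, and note that $n\delta_n = O(nE_n) \to 0$, so a Taylor expansion gives
\[
e^{n\delta_n} = 1 + n\delta_n + O\!\left((n\delta_n)^2\right).
\]
The \emph{first pass}: inserting the crude bound $\delta_n = O(E_n)$ into the fixed-point equation yields the intermediate estimate $\delta_n = \ga_{1,\al}E_n + O(nE_n^2)$. The \emph{second pass}: feeding this improved estimate back gives
\[
E_n e^{n\delta_n}
= E_n + n E_n \delta_n + O(n^2 E_n^3)
= E_n + \ga_{1,\al}\, n E_n^2 + O(n^2 E_n^3),
\]
while $E_n^2 e^{2n\delta_n} = E_n^2 + O(nE_n^3)$ and $e^{-3n\smin} = O(E_n^3)$. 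Substituting into the fixed-point equation and using that $n^2 E_n^3$ absorbs $nE_n^3$ and $E_n^3$ produces exactly
\[
\delta_n = \ga_{1,\al} E_n + \ga_{1,\al}^2\, n E_n^2 - \ga_{2,\al} E_n^2 + O(n^2 E_n^3),
\]
which is the claim after negation and using $E_n = e^{-n\omal}$.

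The main subtlety is careful bookkeeping in the second pass: the $n$-factor that multiplies $E_n^2$ in the final answer comes entirely from expanding $e^{n\delta_n}$ to first order, and one must verify that all the remainders (the contribution from $\ga_{2,\al} E_n^2 e^{2n\delta_n}$, the $O(e^{-3n\smin})$ term, and the quadratic $(n\delta_n)^2$ term) are genuinely $O(n^2 E_n^3)$ and no worse. Apart from this, everything is routine; no new analytic input beyond Lemma~\ref{lem:phiExpansion} and Proposition~\ref{prop:thtn_convergence_omal_exponentially_left} is needed.
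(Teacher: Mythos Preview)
The proposal is correct and follows essentially the same approach as the paper: both use the identity $\phihyp(x)=\varphi_{\al,1}(nx)$, the expansion of $\varphi_{\al,1}$ from Lemma~\ref{lem:phiExpansion}, the initial bound from Proposition~\ref{prop:thtn_convergence_omal_exponentially_left}, and a two-pass bootstrap through the fixed-point equation. Your notation $\delta_n$, $E_n$ streamlines the bookkeeping slightly, but the argument is identical in substance.
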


\begin{proof}
    By  formula~\eqref{eq:thtminus_exp_conv_omal} from Proposition~\ref{prop:thtn_convergence_omal_exponentially_left}, we have an asymptotic expansion of $\smin$ with one exact term:
    \begin{equation}\label{eq:smin_expansion0}
    \smin = \omal + O(e^{-n\omal}).
    \end{equation}
    Therefore,    
    \begin{equation}\label{eq:expns_1}
    e^{-n\smin} = e^{-n\omal+O(ne^{-n\omal})} = e^{-n\omal} (1+O(ne^{-n\omal})) =e^{-n\omal}+O(ne^{-2n\omal}).
    \end{equation}
    This also implies a rough upper bound for $e^{-n\smin}$:
    \begin{equation}\label{eq:expns_0}
    e^{-n\smin} = O(e^{-n\omal}).
    \end{equation}
    The main idea of the following proof is to combine~\eqref{eq:smin_expansion0} with~\eqref{eq:eq_first} and Lemma~\ref{lem:phiExpansion}.
    We apply the asymptotic expansion~\eqref{eq:phi1_expansion} with two exact terms and with $n\smin$ instead of $t$:
    \[
    \smin = \phihyp(\smin) = \varphi_{\al,1}(n\smin)  = \omal - \ga_{1,\al} e^{-n\smin} + O(e^{-2n\smin}).
    \]
    We simplify this expression using~\eqref{eq:expns_1} and~\eqref{eq:expns_0}:
    \begin{align*}
        \smin & = \omal - \ga_{1,\al} e^{-n\omal} + O(ne^{-2n\omal}) + O(e^{-2n\omal})
        \\
        & = \omal - \ga_{1,\al} e^{-n\omal} + O(ne^{-2n\omal}).
    \end{align*}
    Now, we use this expansion to improve~\eqref{eq:expns_1}:
    \begin{align*}
    e^{-n\smin} & = e^{-n\omal} e^{\ga_{1,\al} n e^{-n\omal} + O(n^2e^{-2n\omal})} \\
    & = e^{-n\omal}\left(1 + \ga_{1,\al}ne^{-2n\omal} + O(n^2e^{-2n\omal}) \right) \\
    & = e^{-n\omal} + \ga_{1,\al}ne^{-2n\omal} + O(n^2e^{-3n\omal}).
    \end{align*}
    Next, we combine this expansion with~\eqref{eq:phi1_expansion}:
    \begin{align*}
        \smin & = \phihyp(\smin) = \varphi_{\al,1}(n\smin)  = \omal - \ga_{1,\al} e^{-n\smin} + \ga_{2,\al} e^{-2n\smin} + O(e^{-3n\smin}) \\
        & = \omal -\ga_{1,\al}\left(e^{-n\omal} + \ga_{1,\al}ne^{-2n\omal} + O(n^2e^{-3n\omal})\right) \\
        & \pheq \quad + \ga_{2,\al} \left(e^{-n\omal} + \ga_{1,\al}ne^{-2n\omal} + O(n^2e^{-3n\omal})\right)^2 + O(e^{-3n\omal}).
    \end{align*}
    Simplifying this expression we get~\eqref{eq:smin_expansion}.
\end{proof}

\begin{remark}
    For $n$ large enough, Theorem~\ref{thm:omin_expansion} provides a more precise localization of $\smin$ than in Theorem~\ref{thm:phi_fixed_point} and Proposition~\ref{prop:thtn_convergence_omal_exponentially_left}. Namely, there exists $M_\al$ such that for $n\ge M_\al$,
    \[
      \omal-\ga_{1,\al}e^{-n\omal} - \ga_{1,
 \al}^2 n e^{-2n\omal} < \smin < \omal.
    \]
\end{remark}

\begin{proof}[Proof of Theorem~\ref{thm:left_strong_asympt}]
We expand $g_-$ by Taylor formula around $\omal$:
\[
g_-(\omal + x) =  g_-(\omal) + g_-'(\omal)x + \frac{g_-''(\omal)}{2}x^2 + O(x^3).
\]
Then we substitute the expansion~\eqref{eq:smin_expansion} of $\smin$: 
\begin{align*}
    \la_{\al,n,1} & = g_-(\smin) 
    \\
    & = g_-\left(\omal -\ga_{1,
 \al}e^{-n\omal} - \ga_{1,
 \al}^2 n e^{-2n\omal} + \ga_{2,
 \al} e^{-2n\omal} + O(n^2e^{-3n\omal})\right) \\
     & = g_-(\omal) + g_-'(\omal)  \left(-\ga_{1,
 \al}e^{-n\omal} - \ga_{1,
 \al}^2 n e^{-2n\omal} + \ga_{2,
 \al} e^{-2n\omal} + O(n^2e^{-3n\omal})\right) \\
    &\pheq \quad + \frac{g''(\omal)}{2}\left(-\ga_{1,
 \al}e^{-n\omal} - \ga_{1,
 \al}^2 n e^{-2n\omal} + \ga_{2,
 \al} e^{-2n\omal} + O(n^2e^{-3n\omal})\right)^2 \\
    & \pheq \quad +O(e^{-3n\omal}) \\
    & =  g_-(\omal)  -\ga_{1,\al} g_-'(\omal) e^{-n\omal} -\ga_{1,\al}^2 g_-'(\omal)ne^{-2n\omal}\\&\pheq \quad +  \left(\ga_{\al,2} g_-'(\omal) + \frac{\ga_{1,\al}^2g_-''(\omal)}{2}\right)e^{-2n\omal} + O\left(n^2e^{-3n\omal}\right).
\end{align*}
Recall that $g_-(\omal) = \Omal$.
Hence we obtain~\eqref{eq:left_laasympt} and~\eqref{eq:asmpt_min_three_terms}, with the following coefficients:
\[
\be_{\al,1} = -g_-'(\omal)\ga_{1,\al}, \quad \be_{\al,2} = -g_-'(\omal)
\ga_{1,\al}^2, \quad \be_{\al,3} = -g_-'(\omal)\ga_{\al,2}
-\frac{1}{2} g_-''(\omal)\ga_{1,\al}^2.
\]
Calculate the derivatives of $g_-$ at $\omal$:
\begin{align*}
 g_-'(\omal)
& = -2\sinh(\omal) = \frac{4\al(1-\al)}{1-2\al}
 =-\frac{4\ka}{\ka^2-1}, 
 \\
g_-''(\omal) & = -2\cosh(\omal) = -\frac{2(2\al^2-2\al+1)}{1-2\al}
= -\frac{2(\ka^2+1)}{\ka^2-1}.
\end{align*}
Combining with formulas~\eqref{eq:gammas}, we write $\be_{\al,1}$, $\be_{\al,2}$, and $\be_{\al,3}$ as~\eqref{eq:bs} or~\eqref{eq:bs2}.
\end{proof}

\section{Eigenvectors}\label{sec:eigvec_left}

We recall that $\la_{\al,n,j} = \la_{\Re(\al),n,j}$.
Nevertheless, it turns out that if $\Im(\al) \ne 0$, then the eigenvectors associated to $L_{\al,n}$ have complex components.
So, in this section we suppose that $\al$ belongs to $\bC$ and $\Re(\al)<0$.
To simplify subindices, we put 
\[
\ka \eqdef \varkappa_{\Re(\al)}, \quad 
N_\al \eqdef N_{\Re(\al)}, \quad 
\omal \eqdef \om_{\Re(\al)}, \quad 
\Omal \eqdef \Omega_{\Re(\al)},
\]
\[
\eta_\al \eqdef \eta_{\Re(\al)}, \quad
z_{\al,n,j}\eqdef z_{\Re(\al),n,j}, \quad
s_{\al,n} \eqdef s_{\Re(\al),n}.
\]

\begin{proof}[Proof of Theorem~\ref{thm:eigvec_ls}]
Formulas~\eqref{eq:eivec_ls},~\eqref{eq:eivec_w} are consequences of~\cite[Proposition 8]{GMS2022}.
\end{proof}

Recall that $\nu_\al$ is defined by~\eqref{eq:nu_al}.
For every $x\in[0,\pi]$, we define 
\begin{equation*}
   \begin{aligned}
      \xi_\al(x) & \eqdef
      \frac{|1-\al|^2}{2} g(x) \cos(\eta_\al(x)) + \frac{|\al|^2}{2}g(\eta_\al(x))\cos(x)\\
      &\pheq+\frac{\Re(\al)-|\al|^2}{2}\left(g(x) + g(x+\eta_\al(x)) -g(\eta_\al(x))\right) - 2|\al|^2\cos(x).
    \end{aligned}
\end{equation*}

\begin{prop}[exact formulas for the inner eigenvectors]
\label{prop:exact_norm_eigenvector}
Let $n\ge 3$ and $3\le j\le n$. 
If $j$ is odd, then $\|v_{\al,n,j}\|_2$ is given by~\eqref{eq:norm_eigvec_odd_left}.
If $j$ is even, then
\begin{equation}
\label{eq:exact_norm_eigenvector_even}
     \|v_{\al,n,j}\|_2^2 =  n\nu_\al(z_{\al,n,j}) + \frac{\sin(\eta_\al(z_{\al,n,j}))}{\sin(z_{\al,n,j})}\xi_\al(z_{\al,n,j}).
\end{equation}
\end{prop}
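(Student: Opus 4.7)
The proof is a direct trigonometric sum. Fix $j \in \{3,\ldots,n\}$, abbreviate $z = z_{\al,n,j}$, and, when $j$ is even, $\eta = \eta_\al(z)$. By Theorem~\ref{thm:main_equations} and the odd-$j$ formula in Theorem~\ref{thm:localization_eigvals_left}, one has $nz = (j-1)\pi$ for odd $j$ (so $\sin(nz) = 0$, $\cos(nz) = 1$), while $nz = (j-1)\pi + \eta$ for even $j$ (so $\sin(nz) = -\sin\eta$, $\cos(nz) = -\cos\eta$, using $(-1)^{j-1} = -1$).

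Starting from~\eqref{eq:eivec_w}, the expansion $|v_{\al,n,j,k}|^2 = v_{\al,n,j,k}\,\overline{v_{\al,n,j,k}}$ collapses into real coefficients because the conjugate pairing of $\{1,\,-(1-\overline{\al}),\,\overline{\al}\}$ with $\{1,\,-(1-\al),\,\al\}$ produces only $|1-\al|^2$, $|\al|^2$, and real cross-terms in $\Re(\al)$ and $\Re(\al) - |\al|^2$:
\begin{align*}
|v_{\al,n,j,k}|^2 &= \sin^2(kz) + |1-\al|^2 \sin^2((k-1)z) + |\al|^2 \sin^2((n-k)z) \\
&\quad - 2(1-\Re(\al))\sin(kz)\sin((k-1)z) + 2\Re(\al)\sin(kz)\sin((n-k)z) \\
&\quad - 2(\Re(\al) - |\al|^2)\sin((k-1)z)\sin((n-k)z).
\end{align*}
Apply product-to-sum identities to each term, then sum over $k$ using
\begin{equation*}
\sum_{k=1}^n \cos((2k+\gamma)z) = \frac{\cos((n+1+\gamma)z)\sin(nz)}{\sin z} \qquad (\gamma\in\bR),
\end{equation*}
which is a direct consequence of the geometric sum $\sum_{k=1}^n e^{2ikz} = e^{i(n+1)z}\sin(nz)/\sin z$. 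Each of the six resulting sums splits as a bulk linear in $n$ plus an oscillatory remainder carrying the common factor $\sin(nz)/\sin z$.

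For odd $j$ the oscillatory part vanishes, and collecting the bulk (using $|1-\al|^2 = 1 - 2\Re(\al) + |\al|^2$ and $1 - \cos z = g(z)/2$) gives $n|1-\al|^2\,g(z)/2 = n|1-\al|^2\la_{\al,n,j}/2$, which is~\eqref{eq:norm_eigvec_odd_left}. For even $j$ substitute $\sin(nz)=-\sin\eta$, $\cos(nz)=-\cos\eta$, $\cos((n\pm 1)z) = -\cos(z\mp\eta)$, and rewrite every $\cos(\cdot)$ as $1 - g(\cdot)/2$. The bulk then reorganises exactly as~\eqref{eq:nu_al} into $n\nu_\al(z)$, while the oscillatory part, after factoring $\sin\eta/\sin z$, reduces to a polynomial in $\cos z$, $\cos\eta$, $\cos(z\pm\eta)$ that must be identified with $\xi_\al(z)$.

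The main obstacle is this final algebraic matching for the oscillatory part. The strategy is to rewrite $\cos(z-\eta) = \cos(z+\eta) + 2\sin z\sin\eta$ and use the identity $|1-\al|^2 + |\al|^2 - 1 = -2(\Re(\al) - |\al|^2)$ to combine the $\cos(z\pm\eta)$ contributions into the $g(x+\eta)$ term appearing in the definition of $\xi_\al$; the remaining coefficients of $\cos z$, $\cos\eta$, and the constant then match term by term. The definition of $\xi_\al$ is engineered so that this regrouping goes through, but careful sign tracking is needed to avoid error.
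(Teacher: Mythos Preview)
Your proposal is correct and follows exactly the approach the paper has in mind: the paper's own proof is a one-line reference to \cite[(66), (69)]{GMS2022}, where the analogous formulas for $0<\al<1$ are obtained by the same direct trigonometric computation you describe (expand $|v_{\al,n,j,k}|^2$, apply product-to-sum, sum via the closed geometric formula, and separate the $n$-linear bulk from the $\sin(nz)/\sin z$ remainder). Your outline is in fact more explicit than what the paper gives here, and the verification of the odd case and the handling of $\sin(nz)=-\sin\eta$, $\cos(nz)=-\cos\eta$ for even $j$ are both correct.
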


\begin{proof}
    These formulas are similar to~\cite[(66), (69)]{GMS2022} and are proved in the same manner.
\end{proof}

In this section, we use several identities for hyperbolic functions:
\begin{align} 
\label{eq:difference_sinh}
\sinh(x) \pm \sinh(y) & = 2\sinh\frac{x\pm y}{2} \cosh\frac{x \mp y}{2},
\\
\label{eq:product_cosh}
2\cosh(x)\cosh(y) &= \cosh(x-y)+\cosh(x+y),
\\
\label{eq:square_cosh}
2\cosh^2(x) & = 1+\cosh(2x),
\\
\label{eq:cosh_sum}
\sum_{k=1}^n \cosh(2 k x+y) & = \frac{\sinh(n x) \cosh((n+1)x+y)}{\sinh(x)}.
\end{align}
Define 
\begin{align}
\label{eq:u1}
u_{1,\al,n} &\eqdef -\frac{\la_{\al,n,1}}{2} \left(n +  \frac{ \sinh(2n\smin) }{2\sinh(\smin)} \right), 
\\
\label{eq:u2}
u_{2,\al,n} &\eqdef   2|\al|^2\sinh^2\frac{(n-1)\smin}{2} \left(n + \frac{\sinh(n\smin)}{\sinh(\smin)}\right), 
\\
\label{eq:u3}
u_{3,\al,n} & \eqdef 4\Re(\al) \sinh\frac{(n-1)\smin}{2} \cosh\frac{n\smin}{2} \sinh\frac{\smin}{2}
\left( n + \frac{\sinh(n\smin)}{\sinh(\smin)}\right).
\end{align}

\begin{prop}[exact formula for the norm of the first eigenvector]\label{prop:norm_v1_expansion}
    Let $n\ge N_{\al}$. 
    Then
    \begin{equation}
    \label{eq:exact_norm_v1_us}
    \|v_{\al,n,1}\|_2^2   = u_{1,\al,n} + u_{2,\al,n} +u_{3,\al,n}.
    \end{equation}
\end{prop}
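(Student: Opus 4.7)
The plan is to evaluate $\|v_{\al,n,1}\|_2^2 = \sum_{k=1}^n |v_{\al,n,1,k}|^2$ directly from~\eqref{eq:eivec_ls} by expanding the modulus squared, converting every product of hyperbolic sines into a hyperbolic cosine, applying~\eqref{eq:cosh_sum}, and then collapsing the answer with the characteristic equation that $\smin$ satisfies. Writing $s \eqdef \smin$ and multiplying $v_{\al,n,1,k}$ by its complex conjugate yields a linear combination of the six products $\sinh^2(ks)$, $\sinh^2((k-1)s)$, $\sinh^2((n-k)s)$, $\sinh(ks)\sinh((k-1)s)$, $\sinh(ks)\sinh((n-k)s)$, $\sinh((k-1)s)\sinh((n-k)s)$, whose coefficients are $1$, $|1-\al|^2$, $|\al|^2$, $-2(1-\Re\al)$, $2\Re(\al)$, $-2(\Re(\al)-|\al|^2)$ respectively, obtained from the identities $(1-\overline{\al})+(1-\al)=2(1-\Re\al)$, $\overline{\al}+\al=2\Re(\al)$, and $(1-\overline{\al})\al+(1-\al)\overline{\al}=2(\Re(\al)-|\al|^2)$.

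Second, I would use $2\sinh^2(x)=\cosh(2x)-1$ (a rewriting of~\eqref{eq:square_cosh}) together with $2\sinh(x)\sinh(y)=\cosh(x+y)-\cosh(x-y)$ to turn each of the six summands into a $k$-independent constant plus a single $\cosh(2ks+c)$ with some shift $c$. Summing over $k=1,\ldots,n$ and invoking~\eqref{eq:cosh_sum} reduces $\|v_{\al,n,1}\|_2^2$ to a linear combination of $n$ (from the constant pieces), of $n\cosh(\cdot)$ (from the constants released by the product identities), and of terms of the form $\sinh(ns)\cosh((n+1)s+c)/\sinh(s)$.

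The decisive simplification comes from the characteristic equation $\tanh(ns/2)=\ka\tanh(s/2)$ of Theorem~\ref{thm:main_equations}, written multiplicatively as $\al\sinh(ns/2)\cosh(s/2)=(\al-1)\sinh(s/2)\cosh(ns/2)$. Using it to cancel the cross contributions that mix $\sinh(ns/2)$ with $\cosh(ns/2)$, employing~\eqref{eq:difference_sinh} to reassemble the remaining factors as $\sinh^2((n-1)s/2)$ and $\sinh((n-1)s/2)\cosh(ns/2)\sinh(s/2)$, and recognising $\la_{\al,n,1}=g_-(s)=-4\sinh^2(s/2)$, I expect the sum to regroup into exactly the three pieces $u_{1,\al,n}, u_{2,\al,n}, u_{3,\al,n}$ of~\eqref{eq:u1}--\eqref{eq:u3}. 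In particular, the factor $\sinh(2ns)/(2\sinh(s))$ of $u_{1,\al,n}$ should come from the $\sinh^2(ks)$ summand via~\eqref{eq:cosh_sum}, while the common factor $n+\sinh(ns)/\sinh(s)$ of $u_{2,\al,n}$ and $u_{3,\al,n}$ emerges once the cross contributions have been eliminated.

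The main obstacle will be the bookkeeping: six products, each expanding into two or three $\cosh$-pieces, must consolidate into precisely three monomials with no residual terms. I would therefore organize the calculation by first collecting the ``$n$-linear'' part and the ``$\sinh(ns)/\sinh(s)$-proportional'' part separately, and only at the very end invoke the characteristic equation to kill the residual $\cosh((n+1)s+c)$ factors. The computation parallels the trigonometric derivation leading to Proposition~\ref{prop:exact_norm_eigenvector} and to~\cite[Eqs.~(66),~(69)]{GMS2022}; the only care required is in sign tracking, since the product-to-sum identity $2\sinh(x)\sinh(y)=\cosh(x+y)-\cosh(x-y)$ has the opposite sign convention to its trigonometric analogue.
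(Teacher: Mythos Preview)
Your approach is correct in spirit but takes a longer road than the paper.  The paper's proof avoids the six-product expansion altogether: before squaring, it applies~\eqref{eq:difference_sinh} to regroup
\[
v_{\al,n,1,k}
=\bigl[\sinh(ks)-\sinh((k-1)s)\bigr]
+\overline{\al}\bigl[\sinh((k-1)s)+\sinh((n-k)s)\bigr]
=2\sinh\tfrac{s}{2}\cosh\tfrac{(2k-1)s}{2}
+2\overline{\al}\sinh\tfrac{(n-1)s}{2}\cosh\tfrac{(n+1-2k)s}{2},
\]
so that $|v_{\al,n,1,k}|^2$ has only three pieces, and after~\eqref{eq:square_cosh},~\eqref{eq:product_cosh} and~\eqref{eq:cosh_sum} each piece becomes exactly one of $u_{1,\al,n},u_{2,\al,n},u_{3,\al,n}$.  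In particular the characteristic equation is never invoked: the identity~\eqref{eq:exact_norm_v1_us} (with $\la_{\al,n,1}$ read as $g_-(s)$) holds for \emph{every} $s$, not just $s=\smin$.  Your plan will also reach the answer, since your six products are algebraically the expansion of the paper's three, but the bookkeeping you anticipate is real and the appeal to $\tanh(ns/2)=\ka\tanh(s/2)$ is a red herring---cross terms mixing $\sinh(ns/2)$ and $\cosh(ns/2)$ survive in $u_{3,\al,n}$ and are not meant to be cancelled.  The paper's two-term rewriting is the shortcut that makes the regrouping transparent.
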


\begin{proof}
We transform~\eqref{eq:eivec_ls} using~\eqref{eq:difference_sinh}:
\[ 
v_{\al,n,1,k} = 2 \sinh\frac{\smin}{2} \cosh\frac{(2k-1)\smin}{2}  + 2\overline{\al} \sinh\frac{(n-1)\smin}{2} \cosh\frac{(n+1-2k)\smin}{2}.
\]
Then, 
\begin{align*}
    |v_{\al,n,1,k}|^2 & = 4\sinh^2\frac{\smin}{2}\cosh^2\frac{(2k-1)\smin}{2}\\
    &\pheq + 4|\al|^2\sinh^2\frac{(n-1)\smin}{2}\cosh^2\frac{(n+1-2k)\smin}{2} \\
    & \pheq + 8\Re(\al) \sinh\frac{\smin}{2}  \sinh\frac{(n-1)\smin}{2} \cosh\frac{(2k-1)\smin}{2}\cosh\frac{(n+1-2k)\smin}{2}.
\end{align*}
Applying~\eqref{eq:product_cosh} and~\eqref{eq:square_cosh}, we simplify some products or squares containing $k\smin$:
\begin{align*}
    |v_{\al,n,1,k}|^2 & = -\frac{\la_{\al,n,1}}{2} \bigl(1+\cosh(2k\smin -\smin ) \bigr)
    \\ & \pheq+ 2|\al|^2 \sinh^2\frac{(n-1)\smin}{2} \bigl(1+\cosh(2k\smin -(n+1)\smin)\bigr) \\
    &\pheq + 4\Re(\al) \sinh\frac{(n-1)\smin}{2} \sinh\frac{\smin}{2} \times \\
    & \pheq \qquad \times \left(\cosh\frac{n\smin}{2} + \cosh\left(2k\smin - \frac{(n+2)}{2}\smin\right)\right).
\end{align*}
Finally, we sum over $k$,  use~\eqref{eq:cosh_sum}, and obtain~\eqref{eq:exact_norm_v1_us}.
\end{proof}

\begin{lem}\label{lem:expansions_ujs}
    As $n$ tends to infinity, expressions~\eqref{eq:u1},~\eqref{eq:u2}, and~\eqref{eq:u3} have the following asymptotic behavior:
    \begin{align}
    \label{eq:u1_expansion}
        u_{1,\al,n} &= -\frac{\Omal}{8\sinh(\omal)} e^{2n\omal} + O(ne^{n\omal}), 
        \\
        \label{eq:u2_expansion}
        u_{2,\al,n} &= \frac{|\al|^2 e^{-\omal}}{4\sinh(\omal)}e^{2n\omal} + O(ne^{n\omal}), \\
        \label{eq:u3_expansion}
        u_{3,\al,n} &= \frac{\Omal}{8\sinh(\omal)} e^{2n\omal} + O(ne^{n\omal}).
    \end{align}
\end{lem}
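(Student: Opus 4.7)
My plan is to expand each of the three expressions using two ingredients: first, the elementary large-argument approximations $\sinh(x)=\tfrac{1}{2}e^{x}+O(e^{-x})$ and $\cosh(x)=\tfrac{1}{2}e^{x}+O(e^{-x})$ applied to arguments of order $n\smin$; and second, the convergence $\smin=\omal+O(e^{-n\omal})$ from Theorem~\ref{thm:omin_expansion} together with $\la_{\al,n,1}=\Omal+O(e^{-n\omal})$ from Theorem~\ref{thm:left_strong_asympt}. The uniform lower bound $\smin\ge\ell_{\al,N_{\al}}$ from Corollary~\ref{cor:s>lN} makes $O(e^{-k\smin})=O(e^{-k\omal})$ harmless for $k$ of order $n$. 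From the expansion of $\smin$ one obtains $e^{j\smin}=e^{j\omal}\bigl(1+O(ne^{-n\omal})\bigr)$, and specializing to $j=2n-c$ for a fixed constant $c$ yields the substitution rule $e^{(2n-c)\smin}=e^{-c\omal}e^{2n\omal}+O(ne^{n\omal})$, which is the basic device for converting exponential factors into the form required by~\eqref{eq:u1_expansion}--\eqref{eq:u3_expansion}.

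For $u_{1,\al,n}$ I would approximate $\sinh(2n\smin)/(2\sinh(\smin))=e^{2n\smin}/(4\sinh(\smin))+O(1)$ and use $\la_{\al,n,1}/\sinh(\smin)=\Omal/\sinh(\omal)+O(e^{-n\omal})$; the $n$-term $-n\la_{\al,n,1}/2=O(n)$ is absorbed in $O(ne^{n\omal})$, giving~\eqref{eq:u1_expansion}. For $u_{2,\al,n}$ I would expand $\sinh^{2}((n-1)\smin/2)=e^{(n-1)\smin}/4+O(1)$ and $\sinh(n\smin)/\sinh(\smin)=e^{n\smin}/(2\sinh(\smin))+O(e^{-n\omal})$; the product of the dominant parts is $e^{(2n-1)\smin}/(8\sinh(\smin))$, which the substitution rule turns into $e^{-\omal}e^{2n\omal}/(8\sinh(\omal))+O(ne^{n\omal})$, and multiplying by $2|\al|^{2}$ produces~\eqref{eq:u2_expansion}. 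Every cross-term involving the factor of $n$ or the $O(1)$ remainders is of size $O(ne^{n\omal})$.

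The only non-routine calculation is $u_{3,\al,n}$. Using $\sinh(\smin/2)/\sinh(\smin)=1/(2\cosh(\smin/2))$ and $\sinh(n\smin)=2\sinh(n\smin/2)\cosh(n\smin/2)$, the dominant part becomes
\[
\frac{4\Re(\al)}{\cosh(\smin/2)}\,\sinh\tfrac{(n-1)\smin}{2}\,\cosh^{2}\tfrac{n\smin}{2}\,\sinh\tfrac{n\smin}{2},
\]
whose four hyperbolic factors together contribute $e^{(4n-1)\smin/2}/16=e^{-\smin/2}e^{2n\smin}/16$ at leading order. The resulting leading coefficient is $\Re(\al)e^{-\omal/2}/(4\cosh(\omal/2))$, and the crux is its algebraic identification with $\Omal/(8\sinh(\omal))$. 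From $e^{\omal}=1-2\Re(\al)$ one obtains $e^{-\omal/2}/\cosh(\omal/2)=2/(e^{\omal}+1)=1/(1-\Re(\al))$; combined with $\Omal=4\Re(\al)^{2}/(2\Re(\al)-1)$ and $\sinh(\omal)=2\Re(\al)(\Re(\al)-1)/(1-2\Re(\al))$, both prefactors reduce to $\Re(\al)/(4(1-\Re(\al)))$, which proves~\eqref{eq:u3_expansion}. The delicate part is the bookkeeping here: one must verify that every subdominant contribution (the $n$-term inside the bracket, the $O(1)$ correction picked up when approximating $\cosh^{2}(n\smin/2)\sinh(n\smin/2)$, and the error in $e^{(4n-1)\smin/2}=e^{-\omal/2}e^{2n\omal}+\cdots$) is genuinely $O(ne^{n\omal})$ and not larger, for which $\omal>0$ and Corollary~\ref{cor:s>lN} are essential.
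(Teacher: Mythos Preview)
Your proposal is correct and follows essentially the same approach as the paper: the paper proves only~\eqref{eq:u1_expansion} in detail, using $\la_{\al,n,1}=\Omal+O(e^{-n\omal})$, the expansion $e^{2n\smin}=e^{2n\omal}+O(ne^{n\omal})$, and $1/\sinh(\smin)=1/\sinh(\omal)+O(e^{-n\omal})$, and then declares~\eqref{eq:u2_expansion} and~\eqref{eq:u3_expansion} ``similar''. Your treatment is in fact more thorough, since you carry out $u_{3,\al,n}$ explicitly via the identities $\sinh(\smin/2)/\sinh(\smin)=1/(2\cosh(\smin/2))$ and $\sinh(n\smin)=2\sinh(n\smin/2)\cosh(n\smin/2)$ and then verify the coefficient identity $\Re(\al)e^{-\omal/2}/(4\cosh(\omal/2))=\Omal/(8\sinh(\omal))$ algebraically, which the paper omits.
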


\begin{proof}
We are going to prove~\eqref{eq:u1_expansion}; the proofs of~\eqref{eq:u2_expansion} and~\eqref{eq:u3_expansion} are similar.
As $n\to\infty$, by Theorem~\ref{thm:left_strong_asympt},
\begin{equation}\label{eq:la1_expansion_0}
\la_{\al,n,1} = \Omal + O(e^{-n\omal}).
\end{equation}
By~\eqref{eq:smin_expansion0}, 
\[
e^{2n\smin} = \exp(2n\omal + O(ne^{-n\omal})) = e^{2n\omal} (1 + O(ne^{-n\omal})) = e^{2n\omal} +O(ne^{n\omal}),
\]
\[
\frac{1}{\sinh(\smin)} = \frac{1}{\sinh(\omal)} + O(e^{-n\omal}).
\]
Therefore
\begin{equation}\label{eq:sinh(ns)cosh(ns)}
  n+ \frac{ \sinh(2n\smin) }{2\sinh(\smin)} = \frac{1}{4\sinh(\omal)} e^{2n\omal} + O(ne^{n\omal}).
\end{equation}
We conclude the proof by substituting~\eqref{eq:la1_expansion_0} and~\eqref{eq:sinh(ns)cosh(ns)} into~\eqref{eq:u1}.
\end{proof}

\begin{proof}[Proof of Theorem~\ref{thm:norms_eigvec_ls}]
Formula~\eqref{eq:norm_eigvec_even_left} follows from the exact formula~\eqref{eq:exact_norm_v1_us}, using the approximation~\eqref{eq:theta_approximation_1}, similarly to~\cite[(19)]{GMS2022}.
    To prove~\eqref{eq:norm_eigvec_left_1}, we apply Proposition~\ref{prop:norm_v1_expansion} and Lemma~\ref{lem:expansions_ujs}.
    The principal terms in the expansions~\eqref{eq:u1_expansion} and~\eqref{eq:u3_expansion} mutually annihilate, and
    \[
    \|v_{\al,n,1}\|_2^2  = \frac{|\al|^2 e^{-\omal}}{4\sinh(\omal)} e^{2n\omal} + O(ne^{n\omal}).
    \]
    By substituting $\omal = \log(1-2\Re(\al))$, we transform the coefficient into $\mu_\al^2$, where $\mu_\al$ is defined by~\eqref{eq:mu}.    
    Finally, we take the square root and obtain~\eqref{eq:norm_eigvec_left_1}.
\end{proof}

For $n\ge N_\al$ and $j=1$, we define
\begin{equation}\label{eq:whyp(x)}
    w_{\al,n,1}(x)\eqdef \sinh(x\smin)  - (1-\overline{\al}) \sinh((x-1)\smin) + \overline{\al} \sinh((n-x)\smin).
\end{equation}
For all other values of $n$ and $j$,
\begin{equation}\label{eq:w(x)}
w_{\al,n,j}(x)\eqdef \sin(xz_{\al,n,j})  - (1-\overline{\al}) \sin((x-1)z_{\al,n,j}) + \overline{\al} \sin((n-x)z_{\al,n,j}).
\end{equation}
We notice that~\eqref{eq:w(x)}
can be written as
\[
w_{\al,n,j}(x) = A_{\al,n,j} \sin(z_{\al,n,j}x + B_{\al,n,j}),
\]
for some coefficients $A_{\al,n,j}$ and $B_{\al,n,j}$.
Therefore, for $\al<0$
(here we suppose that $\al$ is real),
$w_{\al,n,j}$ changes its sign approximately $j-1$ times on $[0,\pi]$.

Figure~\ref{fig:eigvec_weak} shows the behavior of the formula for the eigenvectors~\eqref{eq:eivec_w}.
Notice that $\sqrt{\nu_\al(d_{n,j})}$ is an approximation of the quadratic mean of $|v_{\al,n,j,k}|$, $1\le k\le n$.

\begin{figure}[htb]
\centering
\includegraphics{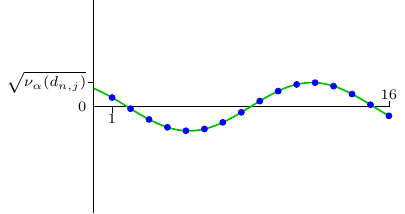}
\quad
\includegraphics{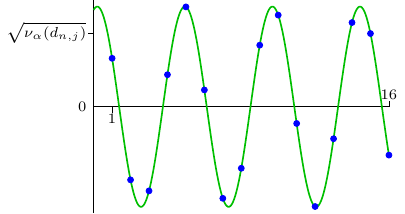}
\caption{Plots of $w_{\al,n,j}$ (green) and points $(k,v_{\al,n,j,k})$ (blue), for $\al = -1/2$, $n=16$, and $j=4, 8$.
\label{fig:eigvec_weak}
}
\end{figure}

Figure~\ref{fig:eigvec_hyp} shows $w_{\al,n,1}$ defined by~\eqref{eq:whyp(x)} and the components of $v_{\al,n,1}$.
We observe that the extreme components of the vector $v_{\al,n,1}$ are much bigger (in the absolute value) than their central components.

\begin{figure}[htb]
\centering
\includegraphics{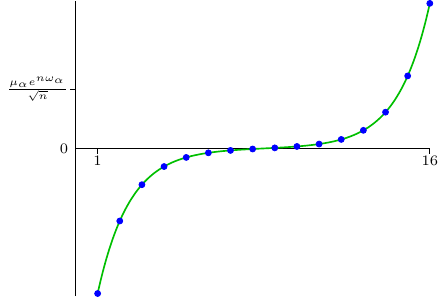}
\caption{Plot of $w_{\al,n,1}$ (green) and points $(k,v_{\al,n,1,k})$ (blue), for $\al = -1/2$ and $n=16$.
\label{fig:eigvec_hyp}
}
\end{figure}


\section{Numerical experiments}~\label{sec:num_exp}

With the help of Sagemath~\cite{Sage2023}, we have verified numerically (for many values of parameters) the representations~\eqref{eq:char_pol_fact},~\eqref{eq:charpol_factorization_trig},~\eqref{eq:char_pol_gminus}, for the characteristic polynomial, exact formulas~\eqref{eq:norm_eigvec_odd_left},~\eqref{eq:exact_norm_eigenvector_even},~\eqref{eq:exact_norm_v1_us} for the norms of the eigenvectors, and many other exact formulas appearing in this paper.

We introduce the following notation for different approximations of the eigenvalues and eigenvectors.
All computations are performed with $3322$ binary digits
($\approx 1000$ decimal digits).
\begin{itemize}
\item $\la_{\al,n,j}^{\text{gen}}$ are the eigenvalues computed in Sagemath by its general algorithms.
The multi-precision arithmetic versions of these algorithms are recently added to Sagemath, and they are not very accurate.

\item $z_{\al,n,j}^{\text{N}}$ is the numerical solution of the equation $h_{\al,n,j}(x)=0$ computed by Newton's method, see Theorem~\ref{thm:Newton}. 

\item Similarly, $\smin^{\text{N}}$ is the solution of $f_{\al,n}(x) = 0$ computed by Newton's method,
see Theorem~\ref{thm:linear_conv_first_eigval}.

\item $\la_{\al,n,j}^{\text{N}}$ is computed as $g(z_{\al,n,j}^{\text{N}})$ or $g(d_{n,j})$ or $g_-(\smin^{\text{N}})$, depending on the case.

\item $\la_{\al,n,j}^{\text{bisec}}$
is similar to $\la_{\al,n,j}^{\text{N}}$,
but now we solve the corresponding equations by the bisection method.

\item $\la_{\al,n,j}^{\text{fp}}$
is similar to $\la_{\al,n,j}^{\text{N}}$,
but now we solve the corresponding equations by the fixed point method.

\item Using $z_{\al,n,j}^{\text{N}}$ 
we compute $v_{\al,n,j}$ by~\eqref{eq:eivec_w} and normalize it.

\item Using $\smin^{\text{N}}$ we compute $v_{\al,n,1}$ by~\eqref{eq:eivec_ls} and normalize it.

\item $\la_{\al,n,j}^{\text{asympt}}$
is the approximation given 
by~\eqref{eq:laasympt_w}
and~\eqref{eq:left_laasympt}.
\end{itemize}

We have constructed a large series of examples including all rational values $\al$ in $[-3,0)$ with denominators $\le 3$
and all $n$ with $N_\al\le n\le 256$.
In all these examples, we have obtained
\[
\max_{1\le j\le n}
\|L_{\al,n}v_{\al,n,j}-\la_{\al,n,j}^{\text{N}}v_{\al,n,j}\|_2
<10^{-996},\qquad
\max_{1\le j\le n}|\la_{\al,n,j}^{\text{gen}}-\la_{\al,n,j}^{\text{N}}|
< 10^{-792},
\] 
\[
\max_{1\le j\le n}
|\la_{\al,n,j}^{\text{N}}-\la_{\al,n,j}^{\text{bisec}}|
<10^{-998},\qquad 
\max_{1\le j\le n}
|\la_{\al,n,j}^{\text{fp}}-\la_{\al,n,j}^{\text{N}}|
<10^{-998} .
\]
For testing the asymptotic formulas, we have computed the errors
\[
R_{\al,n,j}^{\text{asympt}}\eqdef \la_{\al,n,j}^{\text{asympt}}-\la_{\al,n,j}^{\mathrm{N}}
\]
and their maximums
$\|R_{\al,n}^{\text{asympt}}\|_\infty=\max_{1\le j\le n}|R_{\al,n,j}^{\text{asympt}}|$.
Table~\ref{table:errors_asympt_left} shows that these errors indeed can be bounded by $O_\al(1/n^3)$.

\begin{table}[htb]
\caption{Values of $\|R_{\al,n}^{\text{asympt}}\|_\infty$
and $n^3 \|R_{\al,n}^{\text{asympt}}\|_\infty$
for some $\al$ and $n$.
\label{table:errors_asympt_left}}
\[
\begin{array}{|c|c|c|}
\hline
\multicolumn{3}{ |c| }{\bigstrut\al=-1/3}
\\\hline
\bigstrut n & \|R_{\al,n}^{\text{asympt}}\|_\infty &%
\hstrut{}n^3 \|R_{\al,n}^{\text{asympt}}\|_\infty\hstrut{} 
\\\hline
\medstrut 128 & 2.84 \times10^{-5} & 59.6
\\
\medstrut 256 & 4.15 \times10^{-6} & 69.6
\\
\medstrut 512 & 5.54 \times10^{-7} & 74.4
\\
\medstrut 1024 & 7.14 \times10^{-8} & 76.7
\\
\medstrut 2048 & 9.06\times10^{-9} & 77.8
\\
\medstrut\hstrut{}4096\hstrut{}&%
\hstrut{}1.14\times10^{-9}\hstrut{}&%
\hstrut{}78.4\hstrut{}
\\
\medstrut\hstrut{}8192\hstrut{}&%
\hstrut{}1.43\times10^{-10}\hstrut{}&%
\hstrut{}78.7\hstrut{}
\\
\hline
\end{array}
\qquad
\begin{array}{|c|c|c|}
\hline
\multicolumn{3}{ |c| }{\bigstrut\al=-5/4}
\\\hline
\bigstrut n & \|R_{\al,n}^{\text{asympt}}\|_\infty &%
\hstrut{}n^3 \|R_{\al,n}^{\text{asympt}}\|_\infty\hstrut{}%
\\\hline
\medstrut 128 & 1.54 \times10^{-5} & 32.2
\\
\medstrut 256 & 2.02\times10^{-6} & 33.9\\
\medstrut 512 & 2.59\times10^{-7} & 34.8\\
\medstrut 1024 & 3.27\times10^{-8} & 35.1 \\
\medstrut 2048 & 4.11\times10^{-9} & 35.3 \\
\medstrut\hstrut{}4096\hstrut{}&%
\hstrut{}5.16\times10^{-10}\hstrut{}&%
\hstrut{}35.4 \hstrut{}\\
\medstrut\hstrut{}8192\hstrut{}&%
\hstrut{}6.45\times10^{-11}\hstrut{}&%
\hstrut{}35.5 \hstrut{}\\
\hline
\end{array}
\]
\end{table}

We have done similar tests for many other values of $\al$ and $n$. 
Numerical experiments show that $n^3\|R^{\text{asympt}}_{\al,n}\|_\infty$ are bounded by some numbers depending on $\al$.

Since $|R_{\al,n,j}^{\text{asympt}}|$ is much smaller for the outlier eigenvalue ($j=1$), we show in Table~\ref{table:errors_asympt_left_first} some numerical experiments for this case only.
We observe that $|R_{\al,n,1}^{\text{asympt}}|$ is bigger for bigger values of $|\Re(\al)|$.

\begin{table}[htb]
\caption{Values of $|R_{\al,n,1}^{\text{asympt}}|$
and $n^{-2}e^{3n\omal} |R_{\al,n,1}^{\text{asympt}}|$
for some $\al$ and $n$.
\label{table:errors_asympt_left_first}}
\[
\begin{array}{|c|c|c|}
\hline
\multicolumn{3}{ |c| }{\bigstrut\al=-1/3}
\\\hline
\hugestrut n & |R_{\al,n,1}^{\text{asympt}}| &%
\hstrut{} \dfrac{|R_{\al,n,1}^{\text{asympt}}|}{n^2 e^{-3n\omal}} \hstrut{} 
\\\hline
\medstrut 8 & 4.48\times10^{-4} & 1.48
\\
\medstrut 16 & 8.87\times10^{-9} & 1.54
\\
\medstrut 32 & 8.94\times10^{-19} & 1.73
\\
\medstrut 64 & 1.91\times10^{-39} & 1.84
\\
\medstrut 128 & 2.00\times10^{-81} & 1.89
\\
\medstrut 256 & 5.23\times10^{-166} & 1.92
\\
\medstrut 512 & 8.79\times10^{-336} & 1.93
\\
\hline
\end{array}
\qquad
\begin{array}{|c|c|c|}
\hline
\multicolumn{3}{ |c| }{\bigstrut\al=-5/4}
\\\hline
\hugestrut n & |R_{\al,n,1}^{\text{asympt}}| &%
\hstrut{} \dfrac{|R_{\al,n,1}^{\text{asympt}}|}{n^2 e^{-3n\omal}} \hstrut{}%
\\\hline
\medstrut 8 & 6.91\times10^{-10} & 1.23 \times 10^2
\\
\medstrut 16 & 2.77\times10^{-22} & 1.41 \times 10^2
\\
\medstrut 32 & 9.06\times10^{-48} & 1.50 \times 10^2
\\
\medstrut 64 & 2.20\times10^{-99} & 1.55 \times 10^2
\\
\medstrut 128 & 3.09\times10^{-203} & 1.58 \times 10^2
\\
\medstrut 256 & 1.49\times10^{-411} & 1.59 \times 10^2
\\
\medstrut 512 & 8.57\times10^{-829} & 1.60 \times 10^2
\\
\hline
\end{array}
\]
\end{table}

\medskip\noindent
Sergei M. Grudsky,\\ CINVESTAV del IPN,
Departamento de Matem\'aticas,
Ciudad de M\'exico,
Mexico.\\
\href{mailto:grudsky@math.cinvestav.mx}{grudsky@math.cinvestav.mx},\\
\url{ https://orcid.org/0000-0002-3748-5449},\\
\url{https://publons.com/researcher/2095797/sergei-m-grudsky}.

\medskip\noindent
Egor A. Maximenko,\\ Instituto Polit\'ecnico Nacional,
Escuela Superior de F\'isica y Matem\'aticas,
Ciudad de M\'exico,
Mexico.\\
\href{mailto:emaximenko@ipn.mx}{emaximenko@ipn.mx},\\
\url{https://orcid.org/0000-0002-1497-4338}.

\medskip\noindent
Alejandro Soto-Gonz\'{a}lez,\\  CINVESTAV  del IPN,
Departamento de Matem\'aticas,
Ciudad de M\'exico,
Mexico.\\
\href{mailto:asoto@math.cinvestav.mx}{asoto@math.cinvestav.mx},\\
\url{https://orcid.org/0000-0003-2419-4754}.


\begin{thebibliography}{20}

\bibitem{A1989}
 K.E. Atkinson, An Introduction to Numerical Analysis, 2nd ed., Wiley, New York (1989).

\bibitem{BPZ2020}
A. Basak, E. Paquette, O. Zeitouni, Spectrum of random perturbations of Toeplitz matrices with finite symbols, Trans. Amer. Math. Soc. 373 (2020) 4999--5023,
\doi{10.1090/tran/8040}.

\bibitem{BFGM2014}
A. B\"ottcher, L. Fukshansky, S.R. Garcia, H. Maharaj, Toeplitz determinants with perturbations in the corners, J. Funct. Anal. 268 (2014) 171--193, \doi{10.1016/j.jfa.2014.10.023}.

\bibitem{BYR2006}
V. Britanak, P.C. Yip, K.R. Rao, 
Discrete Cosine and Sine Transforms:
General Properties, Fast Algorithms and Integer Approximations,
Academic Press, San Diego (2006).

\bibitem{FF2009}
R. Fernandes, C.M. Da Fonseca,
The inverse eigenvalue problem for Hermitian matrices whose graphs are cycles,
Linear Multilinear Alg. 57 (2009) 673--682, \doi{10.1080/03081080802187870}.

\bibitem{FK2020}
C.M. Da Fonseca, V. Kowalenko,  
Eigenpairs of a family of tridiagonal matrices: three decades later,
Acta Math. Hung. 160 (2020) 376--389,\\ \doi{10.1007/s10474-019-00970-1}.

\bibitem{DV2009}
C.M. Da Fonseca, J.J.P. Veerman,
On the spectra of certain directed paths,
Appl. Math. Lett. 22 (2009) 1351--1355,
\doi{10.1016/j.aml.2009.03.006}.

\bibitem{BS1999}
F. Di Benedetto, S.A. Serra Capizzano, Unifying approach to abstract matrix algebra preconditioning, Numer. Math. 82 (1999) 57--90, \doi{10.1007/s002110050411}.

\bibitem{GS2017}
C. Garoni, S. Serra-Capizzano, Generalized Locally Toeplitz Sequences:
Theory and Applications, vol. I, Springer, Cham (2017).

\bibitem{GT2009}
W.K. Grassmann, J. Tavakoli, 
Spectrum of certain tridiagonal matrices when their dimension goes to infinity,
Linear Algebra Appl. 431 (2009) 1208–1217,
\doi{10.1016/j.laa.2009.04.013}.

\bibitem{GMS2022}
S.M. Grudsky, E.A. Maximenko, A. Soto-Gonz\'alez, 
Eigenvalues of the laplacian matrices of the cycles with one weighted edge, 
Linear Algebra Appl. 653 (2022) 86--115,
\doi{10.1016/j.laa.2022.07.011}.

\bibitem{GMS2021}
S.M. Grudsky, E.A. Maximenko, A. Soto-Gonz\'alez,
Eigenvalues of tridiagonal Hermitian Toeplitz matrices with perturbations in the off-diagonal corners, in: A.N. Karapetyants, V.V. Kravchenko, E. Liflyand, H.R. Malonek (Eds.),
Operator Theory and Harmonic Analysis, OTHA 2020, series Springer Proceedings in Mathematics \& Statistics, vol. 357, Springer, Cham  (2021) 179--202,
\doi{10.1007/978-3-030-77493-6\_11}.

\bibitem{LWHF2014}
Z. Lin, L. Wang, Z. Han, M. Fu, Distributed formation control of multi-agent
systems using complex Laplacian, IEEE Transactions on automatic control, 59 (2014) 1765--1777, \doi{10.1109/TAC.2014.2309031}.


\bibitem{M2012}
J. J. Molitierno, Applications of Combinatorial Matrix Theory to Laplacian Matrices of Graphs, CRC Press, Florida (2012).

\bibitem{OA2014}
A. \"Otele\c{s}, M. Akbulak,
Positive integer powers of one type of complex tridiagonal matrix,
Bull. Malays. Math. Sci. Soc. (2) 37:4 (2014) 971--981,\\
\myurl{https://math.usm.my/bulletin/html/vol37\_4\_6.html}.

\bibitem{R2017}
A.F. Reyes-Lega,
Some aspects of operator algebras in quantum physics, in: L. Cano, A. Cardona, H. Ocampo, A.F. Reyes-Lega (Eds.),
Geometric, Algebraic and Topological Methods for Quantum Field Theory, World Scientific (2016) 1--74,
\doi{10.1142/9789814730884\_0001}. 

\bibitem{Sage2023}
The Sage Developers: SageMath, the Sage Mathematics Software System, version 10.0 (2023), \myurl{https://www.sagemath.org}.

\bibitem{Spivak1994}
M. Spivak, Calculus, 3rd ed., Publish or Perish, Houston (1994).

\bibitem{SS1990}
G.W. Stewart, J. Sun, Matrix Perturbation Theory, 1st ed., Academic Press, Boston (1990).

\bibitem{TS2017}
H. Tavakolipour, F. Shakeri, On tropical eigenvalues of tridiagonal Toeplitz matrices, Linear Algebra Appl. 539 (2017) 198--218,
\doi{10.1016/j.laa.2017.11.009}.

\bibitem{VHB2018}
J.J.P. Veerman, D.K. Hammond, P.E. Baldivieso, Spectra of certain large tridiagonal matrices,
Linear Algebra Appl. 548 (2018) 123–147,
\doi{10.1016/j.laa.2018.03.005}.

\bibitem{ZJJ2019}
M. Zhang, X. Jiang, Z. Jiang, Explicit determinants, inverses and eigenvalues of four band Toeplitz matrices with perturbed rows, Special Matrices 7 (2019) 52--66,
\doi{10.1515/spma-2019-0004}.
\end{thebibliography}
\end{document}